\newtheorem{Thm}{Theorem}[section]
\newtheorem{Lem}{Lemma}[section]
\newtheorem{Pro}{Proposition}[section]
\newtheorem{Cor}{Corollary}[section]
\theoremstyle{definition}
\newtheorem{Rem}{Remark}[section]
\newcommand{\ConvD}{\overset{d}{\rightarrow}}
\newcommand{\ConvFDD}{\overset{f.d.d.}{\longrightarrow}}
\newcommand{\B}{\mathrm{B}}
\newcommand{\Var}{\mathrm{Var}}
\newcommand{\E}{\mathbb{E}}
\newcommand{\Del}{\boldsymbol{\Delta}}
\begin{document}

\begin{frontmatter}
\title{Behavior of the generalized Rosenblatt process \\at extreme critical exponent values}
\runtitle{Behavior of the generalized Rosenblatt at critical exponent values}

%\begin{keyword}[class=MSC]
%\kwd[Primary]{60G18}
%\kwd[secondary]{60F05}
%\end{keyword}

%\begin{keyword}
%\kwd{}
%\kwd{}
%\end{keyword}

\begin{aug}
\author{\fnms{Shuyang} \snm{Bai}\thanksref{m1}\ead[label=e1]{bsy9142@gmail.com}} and
\author{\fnms{Murad} S. \snm{Taqqu}\corref{tq}\thanksref{t1,m1}\ead[label=e2]{murad@bu.edu}}

\thankstext{t1}{Corresponding author}
\runauthor{Bai, S.  and  Taqqu, M.S.}

\affiliation{Boston University\thanksmark{m1}}

\address{Department of Mathematics and Statistics\\
Boston University\\
111 Cumminton Mall\\
Boston, MA, 02215, US\\
\printead{e1}
}
\address{Department of Mathematics and Statistics\\
Boston University\\
111 Cumminton Mall\\
Boston, MA, 02215, US\\
\printead{e2}
}
\end{aug}

\begin{abstract}
The generalized Rosenblatt process  is obtained by replacing the single critical exponent characterizing the Rosenblatt process by two different  exponents living in the interior of a triangular  region.  
What happens to that generalized Rosenblatt process as these critical exponents approach the boundaries of the triangle? We show by two different methods that on each of the two  symmetric boundaries, the limit is non-Gaussian. On the third boundary, the limit is Brownian motion. The rates of convergence to these boundaries are also given.  The situation is particularly delicate as one approaches the corners of the triangle, because the limit process will depend on how these corners are approached. All limits are in the sense of weak convergence in $C[0,1]$. These limits cannot be strengthened to convergence in $L^2(\Omega)$.

\end{abstract}

\begin{keyword}[class=MSC]
\kwd{60F05}
\kwd{60K35}
\end{keyword}

\begin{keyword}
\kwd{Long memory}
\kwd{Self-similar processes}  
\kwd{Rosenblatt processes}
\kwd{Generalized Rosenblatt processes} 
\end{keyword}

\end{frontmatter}

\section{Introduction}

\citet{maejima:tudor:2012:selfsimilar}  considered recently the following process defined through a second-order Wiener-It\^o integral: 
\begin{equation}\label{eq:ori gen Ros proc}
Z_{\gamma_1,\gamma_2}(t)=  A~ \int_{\mathbb{R}^2}' \left[\int_0^t (s-x_1)_+^{\gamma_1}(s-x_2)_+^{\gamma_2} ds \right] B(dx_1) B(dx_2),
\end{equation}
where $A\neq 0$ is a constant, $B(\cdot)$ is a Brownian random measure, the prime $'$ indicates the \emph{exclusion} of the diagonals $x_1=x_2$ in the double stochastic integral, and the exponents $\gamma_1,\gamma_2$ live in the following open triangular region (see Figure \ref{fig:main}): 
\begin{equation}\label{eq:region}
\Del=\{(\gamma_1,\gamma_2): ~-1<\gamma_1<-1/2,~ -1<\gamma_2<-1/2,  ~\gamma_1+\gamma_2>-3/2\}. 
\end{equation}
\begin{figure}
\centering
\begin{tikzpicture}[scale=8]
\draw[->] (-0.99,-0.99) -- (-0.86,-0.99);
\draw[->] (-0.99,-0.99) -- (-0.99,-0.86);
\node at (-0.82,-0.99) {\small $\gamma_1$};
\node at (-0.99,-0.82) {\small $\gamma_2$};
\draw (-1/2,-1/2)-- (-1,-1/2);
\draw (-1,-1/2)-- (-1/2,-1);
\draw (-1/2,-1)-- (-1/2,-1/2);
\draw (-3/4,-3/4)-- (-1/2,-1/2);
\node at (-0.45, -0.45) {\small $(-\frac{1}{2},-\frac{1}{2})$};
\node at (-1/2,-1.05) {\small $(-\frac{1}{2},-1)$};
\node at (-1, -0.45) {\small $(-1,-\frac{1}{2})$};
\node at (-0.6,-0.65) {\small $m$};
\node at (-0.45,-0.75) {\small $e_1$};
\node at (-0.75,-0.46) {\small $e_2$};
\node at (-0.8,-0.8) {\small $d$};
\end{tikzpicture}
\caption{Region $\Del$ defined in (\ref{eq:region}).
\\{\small
 The three edges of the triangle are named $e_1,e_2$ and $d$ (diagonal), while the middle line segment (symmetric axis) is named $m$.}}\label{fig:main}
\end{figure}
This ensures that the integrand in (\ref{eq:ori gen Ros proc}) is in $L^2(\mathbb{R}^2)$, and hence the process $Z_{\gamma_1,\gamma_2}(t)$ is well-defined (see Theorem 3.5 and Remark 3.1 of \citet{bai:taqqu:2013:3-generalized}).

We shall call $Z_{\gamma_1,\gamma_2}(t)$ a \emph{generalized Rosenblatt process}. The \emph{Rosenblatt process} $Z_{\gamma}(t)$ (\citet{taqqu:1975:weak}) becomes the special case
\begin{equation}\label{eq:Rosenblatt proc}
Z_{\gamma}(t)=Z_{\gamma,\gamma}(t), \qquad  -3/4<\gamma<-1/2.
\end{equation}
Recent studies on the Rosenblatt process $Z_\gamma(t)$ include
  \citet{tudor:Viens:2009:variations}, \citet{bardet:tudor:2010:wavelet},  \citet{arras:2013:white}, \citet{maejima:tudor:2013:distribution},  \citet{veillette:taqqu:2013:properties}  and \citet{bojdecki:Gorostiza:2013:intersection}. The Rosenblatt and the generalized Rosenblatt processes are of interest because they are the simplest extension to the non-Gaussian world of the Gaussian fractional Brownian motion.

Fractional Brownian motion $B_H(t)$, $1/2<H<1$ is defined through a single Wiener-It\^o (or Wiener) integral:
\[
B_H(t)= C\int_{\mathbb{R}}\left[ \int_0^t  (s-x)_+^{H-3/2} ds\right] B(dx),
\]
and has covariance
\begin{equation}\label{eq:fbm cov}
\E B_{H}(s)B_{H}(t)=\frac{C'}{2}\left( |s|^{2H}+ |t|^{2H}-|s-t|^{2H}\right),
\end{equation}
where $C$ and $C'$ are two related constants. Fractional Brownian motion reduces to Brownian motion if one sets $H=1/2$ in (\ref{eq:fbm cov}). Fractional Brownian motion has stationary increments and, for any $1/2<H<1$,  these increments have a covariance which decreases slowly as the lag increases. This slow decay is often referred to as \emph{long memory} or \emph{long-range dependence}. Fractional Brownian motion is also self-similar with \emph{self-similarity parameter (Hurst index)} $H$, that is, $B_H(\lambda t)$ has the same finite-dimensional distributions as $\lambda^H B_H(t)$ for any $\lambda>0$. It follows from \citet{bai:taqqu:2013:3-generalized} that the generalized Rosenblatt process $Z_{\gamma_1,\gamma_2}(t)$ is also self-similar with stationary increments with  self-similarity parameter  
\begin{equation}\label{eq:hurst}
H=\gamma_1+\gamma_2+2\in (1/2,1).
\end{equation}
We get $1/2<H<1$ because $\gamma_1,\gamma_2<-1/2$ imply $H<1$ and $\gamma_1+\gamma_2>-3/2$ implies $H>1/2$. 

Fractional Brownian motion and the generalized Rosenblatt process $Z_{\gamma_1,\gamma_2}(t)$ belong  to a broad class of self-similar processes with stationary increments defined on a Wiener chaos called \emph{generalized Hermite processes}. 
The generalized Hermite processes appear as limits in various types of non-central limit theorems involving Volterra-type nonlinear process.  In particular, the generalized Rosenblatt process $Z_{\gamma_1,\gamma_2}(t)$ can arise as limit when considering a quadratic form involving two long-memory linear processes with different memory parameters. See \citet{bai:taqqu:2013:3-generalized,bai:taqqu:2014:4-convergence,bai:taqqu:2014:5-impact} for details.

It will be convenient to express the generalized Rosenblatt process as follows, 
\begin{equation}\label{eq:gen Rosenblatt proc}
Z_{\gamma_1,\gamma_2}(t)=  \frac{A}{2}~ \int_{\mathbb{R}^2}' \left[\int_0^t [(s-x_1)_+^{\gamma_1}(s-x_2)_+^{\gamma_2}+(s-x_1)_+^{\gamma_2}(s-x_2)_+^{\gamma_1}] ds \right]B(dx_1) B(dx_2),
\end{equation}
where we replaced the kernel $A\int_0^t (s-x_1)_+^{\gamma_1}(s-x_2)_+^{\gamma_2}ds$
by its symmetrized version. The process $Z_{\gamma_1,\gamma_2}(t)$ remains invariant under such a modification.

The goal of this paper is to study the distributional behavior of the standardized $Z_{\gamma_1,\gamma_2}(t)$ (where $A$ in 
(\ref{eq:gen Rosenblatt proc}) is chosen so that $\Var[Z_{\gamma_1,\gamma_2}(1)]=1$), as $(\gamma_1,\gamma_2)$ approaches the boundaries of the region $\Del$ defined in (\ref{eq:region}).

We show that on the diagonal boundary $d$, the limit is Brownian motion.  On each of the two  symmetric boundaries $e_1$ and $e_2$ of $\Del$, the limit is non-Gaussian: it is a fractional Brownian motion times an independent Gaussian random variable.  We give two different proofs of this convergence, one based on the method of moments, and one which provides more intuitive insight.  We also give the rate of convergence to the marginal distribution in the preceding two cases.

The situation at the corners is particularly delicate. At the corner $(\gamma_1,\gamma_2)=(-1/2,-1/2)$, the limit process is a  linear combination of two  independent degenerate chi-square processes. At the other two corners, the limit is a linear combination of two processes: a Brownian motion and the product of another Brownian motion times an independent Gaussian random variable. These linear combinations, which depend on the direction at which the critical exponents approach the corners, will be given explicitly.

 We also show that  the convergences  mentioned cannot be strengthened from weak convergence to $L^2(\Omega)$ convergence, nor even to convergence in probability.

%when $\gamma_1\rightarrow -1/2$ and $\gamma_2\rightarrow \gamma$ with $-1<\gamma<-1/2$, the limit is a product of a fractional Brownian motion with an independent normal random variable;  when $\gamma_1+\gamma_2\rightarrow -3/2$ and both $\gamma_1,\gamma_2$ are away from $-1/2$, the limit is  Brownian motion. At the corners the situation is delicate and the limit depends on the direction at which the corners are approached. At the corner $(\gamma_1,\gamma_2)=(-1/2,-1)$ or $(-1,-1/2)$, the limit is a linear combination of a Brownian motion $B_1(t)$ with $W\cdot B_2(t)$, where $B_1(t)$ and $B_2(t)$ are Brownian motions and $W$ is a Gaussian random variable, $B_1(t)$, $B_2(t)$ and $W$ being mutually independent;at the corner $(\gamma_1,\gamma_2)=(-1/2,-1/2)$ or $(-1,-1/2)$, the limit is a  linear combination of $t\cdot X_1$ and $t \cdot X_2$, where $X_1$ and $X_2$ are two standardized $\chi_1^2$ random variables;  The weights of these linear combinations  at the corners depend on the direction $(\gamma_1,\gamma_2)$ approaches the corners.

The paper is organized as follows. In Section \ref{sec:main}, we state the main results with proofs  in Section \ref{sec:proof}. In the following three sections, we provide some additional results: showing that  $L^2(\Omega)$ convergence cannot hold,  establishing the rate of marginal convergence on the boundaries $d$, $e_1$ and $e_2$, and giving an alternate proof of the convergence on the boundaries $e_1$ and $e_2$.

\section{Main results}\label{sec:main}

 In the following theorems, we let $\Rightarrow$ denote weak convergence in the space $C[0,1]$ with uniform metric. The multiplicative factor $A$  in (\ref{eq:gen Rosenblatt proc}) is chosen so that $\Var[Z_{\gamma_1,\gamma_2}(1)]=1$. See (\ref{eq:A}) below for an explicit expression.

We focus first on results concerning  the behavior of $Z_{\gamma_1,\gamma_2}(t)$ as $(\gamma_1,\gamma_2)$ approaches the boundary of $\Del$ in (\ref{eq:region}), excluding the corners. 
Theorem \ref{Thm:main diag} involves convergence to the diagonal edge $d$ of $\Del$, where the limit is Brownian motion.  See Figure \ref{fig:thm diag}.
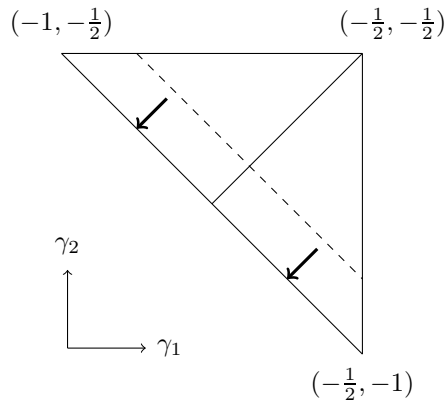
\begin{figure}[t,h]
\centering
\begin{tikzpicture}[scale=8]
\draw[->] (-0.99,-0.99) -- (-0.86,-0.99);
\draw[->] (-0.99,-0.99) -- (-0.99,-0.86);
\node at (-0.82,-0.99) {\small $\gamma_1$};
\node at (-0.99,-0.82) {\small $\gamma_2$};
\draw (-1/2,-1/2)-- (-1,-1/2);
\draw (-1,-1/2)-- (-1/2,-1);
\draw (-1/2,-1)-- (-1/2,-1/2);
\draw (-3/4,-3/4)-- (-1/2,-1/2);
\node at (-0.45, -0.45) {\small $(-\frac{1}{2},-\frac{1}{2})$};
\node at (-1/2,-1.05) {\small $(-\frac{1}{2},-1)$};
\node at (-1, -0.45) {\small $(-1,-\frac{1}{2})$};
\draw[->,very thick] (-1.5+0.875+0.05,-0.875+0.05)--(-1.5+0.875,-0.875);
\draw[->,very thick] (-0.875+0.05,-1.5+0.875+0.05)--(-0.875,-1.5+0.875);
\draw[dashed] (-0.875,-1/2)--(-1/2,-0.875);
\end{tikzpicture}
\caption{Illustration of limit taking in Theorem \ref{Thm:main diag}}\label{fig:thm diag}
\end{figure}
\begin{Thm}\label{Thm:main diag}
Let $Z_{\gamma_1,\gamma_2}(t)$, $(\gamma_1,\gamma_2)\in \Del$, be defined in (\ref{eq:gen Rosenblatt proc}) with $A=A(\gamma_1,\gamma_2)$ in (\ref{eq:A}). 
When $\gamma_1+\gamma_2\rightarrow -3/2$ with $\gamma_1,\gamma_2>-1+\epsilon$ for arbitrarily fixed $\epsilon>0$, we have
\begin{equation}\label{eq:boundary two}
Z_{\gamma_1,\gamma_2}(t) \Rightarrow   B(t),
\end{equation}
where $B(t)$ is a standard Brownian motion.
\end{Thm}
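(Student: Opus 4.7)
The plan is to establish weak convergence in $C[0,1]$ by separately proving (a) convergence of the finite-dimensional distributions to those of Brownian motion and (b) tightness in $C[0,1]$.

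Since $Z_{\gamma_1,\gamma_2}$ is $H$-self-similar with stationary increments and $\Var[Z_{\gamma_1,\gamma_2}(1)]=1$, where $H=\gamma_1+\gamma_2+2$, its covariance equals $\tfrac12(s^{2H}+t^{2H}-|t-s|^{2H})$, which converges uniformly on $[0,1]^2$ to $\min(s,t)$ as $H\to 1/2$. Therefore (a) reduces to showing joint Gaussianity in the limit. Each $Z_{\gamma_1,\gamma_2}(t)$ lives in the second Wiener chaos, so by the Peccati--Tudor multivariate Fourth Moment Theorem it suffices to check componentwise convergence to a normal law, which by self-similarity reduces to $t=1$ and is in turn equivalent to the vanishing of the contraction norm $\|f\otimes_1 f\|_{L^2(\mathbb{R}^2)}^2$, where $f=f_{\gamma_1,\gamma_2}$ denotes the symmetric kernel of $Z_{\gamma_1,\gamma_2}(1)$ read off from (\ref{eq:gen Rosenblatt proc}).

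The main step is therefore to show $\|f\otimes_1 f\|^2\to 0$ as $\gamma_1+\gamma_2\to -3/2$, under the assumption $\gamma_1,\gamma_2>-1+\epsilon$. Inserting the symmetrized kernel and performing the single contraction in the hidden variable turns $\|f\otimes_1 f\|^2$ into a quadruple $s$-integral; I would carry out the intermediate $x$-integrations using the identity $\int_{\mathbb{R}}(s_1-x)_+^{\alpha}(s_2-x)_+^{\beta}\,dx=\B(\alpha+1,-\alpha-\beta-1)|s_1-s_2|^{\alpha+\beta+1}$, valid when $\alpha,\beta>-1$ and $\alpha+\beta<-1$. This reduces the contraction norm to a linear combination of iterated beta-type integrals of the same form as those producing the normalizing constant $A(\gamma_1,\gamma_2)$ in (\ref{eq:A}), and the problem becomes to verify that the ratio $\|f\otimes_1 f\|^2/\Var[Z_{\gamma_1,\gamma_2}(1)]^2$ vanishes in the diagonal limit. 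The hypothesis $\gamma_i>-1+\epsilon$ is exactly what keeps each individual $x$-integration convergent and the associated Beta factors uniformly bounded, so that only the diagonal exponent $\gamma_1+\gamma_2$ drives the asymptotics; dropping this uniform separation from $-1$ would instead produce the non-Gaussian behavior found at the edges $e_1,e_2$.

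Tightness is handled by Kolmogorov's continuity criterion: stationary increments and $H$-self-similarity give $\E|Z_{\gamma_1,\gamma_2}(t)-Z_{\gamma_1,\gamma_2}(s)|^2=|t-s|^{2H}$, and hypercontractivity on the second Wiener chaos upgrades this to $\E|Z_{\gamma_1,\gamma_2}(t)-Z_{\gamma_1,\gamma_2}(s)|^4\leq C|t-s|^{4H}$ for a \emph{universal} constant $C$ independent of the parameters. Since $4H>2$ uniformly in $\Del$, Kolmogorov's criterion furnishes tightness in $C[0,1]$ with a modulus independent of $(\gamma_1,\gamma_2)$, which combined with (a) yields the claimed weak convergence. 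The hard part will be the explicit asymptotic analysis of $\|f\otimes_1 f\|^2$: the symmetrization in (\ref{eq:gen Rosenblatt proc}) creates several terms whose cancellations must be tracked carefully, and the uniform bound based on $\gamma_i>-1+\epsilon$ must be kept throughout in order to separate the Gaussian diagonal limit from the non-Gaussian edge limits.
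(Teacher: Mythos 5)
Your proposal is correct in outline and reaches the result by a route the paper itself flags as an alternative but does not follow. For the finite-dimensional distributions, the paper uses the full method of moments: via the cumulant formula of Proposition \ref{Pro:joint cum} it shows that \emph{every} cumulant of order $m\ge 3$ of $\sum_i c_i Z_{\gamma_1,\gamma_2}(t_i)$ tends to zero, because $A(\gamma_1,\gamma_2)^m\asymp (2(\gamma_1+\gamma_2)+3)^{m/2}\to 0$ while the circular integral $C_m$ stays bounded. You instead invoke the Nualart--Peccati fourth moment theorem together with the Peccati--Tudor multivariate version, reducing everything to the covariance (handled by self-similarity) plus the vanishing of $\|f\otimes_1 f\|^2$, i.e.\ of the single fourth cumulant at $t=1$. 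This is exactly the shortcut mentioned in the remark following the paper's proof, where the authors observe that it ``does not simplify significantly the proof'': the contraction norm, once the $x$-integrations are performed, is the same circular beta-type integral as $C_4$, and its uniform boundedness still has to be proved. That boundedness is the one substantive step your sketch leaves implicit: asserting that the beta factors are bounded is not enough, because the resulting integral $\int_{[0,1]^4}\prod_{j}|s_j-s_{j-1}|^{\alpha_j}\,d\mathbf{s}$ with negative exponents arranged cyclically could still blow up; the paper handles this with the Fox--Taqqu power-counting lemma, using precisely that $\alpha_j\ge -1+2\epsilon$ (from $\gamma_i>-1+\epsilon$) and $\sum_j\alpha_j+4\ge 2>1$ (from $\gamma_1+\gamma_2\ge -3/2$). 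Also note that the one-variable identity you quote should carry two beta coefficients, $\B(\alpha+1,-\alpha-\beta-1)$ on $\{s_2>s_1\}$ and $\B(\beta+1,-\alpha-\beta-1)$ on $\{s_1>s_2\}$, as in (\ref{eq:int formula}); this does not affect your bounds since both are uniformly controlled. Your tightness argument (hypercontractivity plus $\E|Z(t)-Z(s)|^2=|t-s|^{2H}$ with $4H\ge 2$) is identical to the paper's Lemma \ref{Lem:tight}. What your approach buys is economy of verification --- only the fourth cumulant rather than all of them --- at the cost of citing the fourth moment machinery; what the paper's approach buys is a single unified cumulant computation that it then reuses for the non-Gaussian limits on the edges and corners, where the fourth moment theorem is unavailable.
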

One has $\gamma_1+\gamma_2=-3/2$ all through the diagonal $d$. The corners of the triangle are excluded by the requirement $\gamma_1,\gamma_2>-1+\epsilon$. Convergence to Brownian motion in (\ref{eq:boundary two}) is  expected heuristically since the self-similarity parameter $H=\gamma_1+\gamma_2+2\rightarrow 1/2$ (see (\ref{eq:hurst})), and $1/2$ is the self-similarity parameter of Brownian motion.

The next Theorem \ref{Thm:main edge}  involves convergence to either one of the two sides $e_1$ and $e_2$ of $\Del$. The vertical side $e_1$ and the horizontal side $e_2$ are parameterized respectively by $(-1/2,\gamma)$ and $(\gamma,-1/2)$ where $-1<\gamma<-1/2$. See Figure \ref{fig:thm edge}.
\begin{Thm}\label{Thm:main edge}
Let $Z_{\gamma_1,\gamma_2}(t)$, $(\gamma_1,\gamma_2)\in \Del$, be defined in (\ref{eq:gen Rosenblatt proc}) with $A=A(\gamma_1,\gamma_2)$ in (\ref{eq:A}). 
When $(\gamma_1,\gamma_2)\rightarrow (-1/2,\gamma)$ or $(\gamma_1,\gamma_2)\rightarrow(\gamma,-1/2)$, where $-1<\gamma<-1/2$ , we have
\begin{equation}\label{eq:boundary one}
Z_{\gamma_1,\gamma_2}(t) \Rightarrow  W  B_{\gamma+3/2}(t),
\end{equation}
where $B_{\gamma+3/2}(t)$ is a standard fractional Brownian motion with self-similarity parameter $\gamma+3/2$, and $W$ is a standard normal random variable which is independent of $B_{\gamma+3/2}(t)$.
\end{Thm}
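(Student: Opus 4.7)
The plan is to establish weak convergence in $C[0,1]$ via (i) convergence of the finite-dimensional distributions by the method of moments, and (ii) tightness by second-chaos hypercontractivity. By the symmetry of the kernel in (\ref{eq:gen Rosenblatt proc}) under $(\gamma_1,\gamma_2)\leftrightarrow(\gamma_2,\gamma_1)$, it suffices to treat the case $(\gamma_1,\gamma_2)\to(-1/2,\gamma)$. First I would write $Z_{\gamma_1,\gamma_2}(t)=I_2(K_{\gamma_1,\gamma_2,t})$, where $K_{\gamma_1,\gamma_2,t}$ is the symmetric kernel from (\ref{eq:gen Rosenblatt proc}) carrying the prefactor $A(\gamma_1,\gamma_2)/2$, and realize the candidate limit in the same second Wiener chaos as $WB_{\gamma+3/2}(t)=I_2(f\tilde\otimes g_t)$, where $f,g_t\in L^2(\mathbb R)$ are orthogonal with $W=I_1(f)$ and $B_{\gamma+3/2}(t)=I_1(g_t)$; orthogonality is what collapses $I_1(f)I_1(g_t)$ to $I_2(f\tilde\otimes g_t)$ alone.

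Since second-chaos laws are determined by moments, for (i) it suffices to show
\[
E\!\left[\prod_{i=1}^n Z_{\gamma_1,\gamma_2}(t_i)^{k_i}\right]\longrightarrow E\!\left[\prod_{i=1}^n\bigl(WB_{\gamma+3/2}(t_i)\bigr)^{k_i}\right]
\]
for all $t_i\in[0,1]$ and $k_i\in\mathbb N$. Expanding via the diagram formula for products of double Wiener-It\^o integrals, each side becomes a finite sum, indexed by multigraphs on $N=\sum_i k_i$ vertices with every vertex of degree $2$, of cycle traces $\mathrm{tr}(T_{K_{t_{i_1}}}\cdots T_{K_{t_{i_\ell}}})$ with explicit combinatorial weights. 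Each trace reduces, after integrating in the $s$-variables, to products of the three elementary Beta-type factors $\int(s-x)_+^{\gamma_a}(s'-x)_+^{\gamma_b}\,dx=c_{ab}\,|s-s'|^{\gamma_a+\gamma_b+1}$ with $(a,b)\in\{(1,1),(1,2),(2,2)\}$. The decisive observation is that $c_{11}(\gamma_1)$ diverges like $|2\gamma_1+1|^{-1}$ as $\gamma_1\to-1/2$, while $c_{12}$ and $c_{22}$ stay bounded, and the normalization satisfies $A(\gamma_1,\gamma_2)\sim c_\gamma\,|2\gamma_1+1|^{1/2}$. Balancing the $A^N$ factor against powers of $c_{11}$, only multigraphs in which every edge pairs like-exponent slots (with $\gamma_1$-edges and $\gamma_2$-edges occurring in equal number) survive the limit; these are precisely the \emph{double pairings} that combine one pairing of the $N$ $\gamma_1$-slots with one independent pairing of the $N$ $\gamma_2$-slots. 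In the limit, the former produce the factor $E[W^N]=(N-1)!!$ and the latter produce $E[\prod_i B_{\gamma+3/2}(t_i)^{k_i}]$ by Wick's formula, so by independence of $W$ and $B_{\gamma+3/2}$ the product matches $E[\prod_i(WB_{\gamma+3/2}(t_i))^{k_i}]$ exactly.

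For (ii), hypercontractivity in the second chaos gives $E|I_2(K)|^{2p}\le c_p(E|I_2(K)|^2)^p$, and the self-similarity and stationarity of increments of $Z_{\gamma_1,\gamma_2}$ (with Hurst index $H=\gamma_1+\gamma_2+2\to\gamma+3/2>1/2$) give $E|Z(t)-Z(s)|^2=|t-s|^{2H}$. Choosing $p$ large enough that $2pH>1$ and applying the Kolmogorov-Chentsov criterion, uniformly over a neighborhood of $(-1/2,\gamma)$, yields tightness in $C[0,1]$.

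The main obstacle is the combinatorial bookkeeping in step (i): individual cycle traces genuinely blow up, and the argument must verify that the divergent $c_{11}$-factors cancel the vanishing $A$-factors precisely along the balanced double-pairing graphs, while all unbalanced graphs, whose divergences do not saturate the $A^N$ decay, vanish in the limit. Doing this uniformly in the direction of approach to $(-1/2,\gamma)$, since we are in a genuinely two-parameter limit, is the technical heart of the proof.
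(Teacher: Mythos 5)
Your proposal is correct and follows essentially the same strategy as the paper: tightness via second-chaos hypercontractivity together with $H=\gamma_1+\gamma_2+2\ge 1/2$ (the paper's Lemma \ref{Lem:tight}, which works uniformly over all of $\Del$ with the fourth moment alone), and convergence of finite-dimensional distributions by the method of moments, hinging on exactly the balance you identify: $\B(\gamma_1+1,-2\gamma_1-1)\sim(-2\gamma_1-1)^{-1}$ diverges while the normalization satisfies $A(\gamma_1,\gamma_2)\sim c_\gamma(-2\gamma_1-1)^{1/2}$, so only configurations that pair $\gamma_1$-slots with $\gamma_1$-slots as often as possible survive. The one organizational difference is that the paper works with joint cumulants rather than joint moments: Proposition \ref{Pro:joint cum} reduces everything to a single circular product indexed by $\sigma\in\{1,2\}^m$, so the combinatorics is just counting indices $j$ with $\sigma_j=\sigma_{j-1}'=1$ (at most $m/2$, attained only by the two alternating sequences when $m$ is even, whence odd cumulants vanish and even cumulants converge to those of $WB_{\gamma+3/2}$ computed separately in Lemma \ref{Lem:Z_gamma limit cumulant}). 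Your moment/diagram-formula version needs the extra step of decomposing the $2$-regular multigraphs into cycles and recognizing the surviving ones as superpositions of a perfect matching of the $\gamma_1$-slots with an independent perfect matching of the $\gamma_2$-slots, but it buys a more transparent identification of the limit, producing $\E[W^N]=(N-1)!!$ and Wick's formula for the fractional Brownian motion moments directly. Two small points to attend to: the cross constant $c_{12}$ in $\int_{\mathbb{R}}(s-x)_+^{\gamma_1}(s'-x)_+^{\gamma_2}dx$ is really a pair of beta constants depending on the sign of $s-s'$ (see (\ref{eq:int formula})), which is harmless since these stay bounded; and the appeal to moment-determinacy of the second chaos should be justified as the paper does, e.g.\ via the analyticity of the characteristic function of the product-normal (Remark \ref{Rem:analytic}) or via the Nourdin--Poly criterion.
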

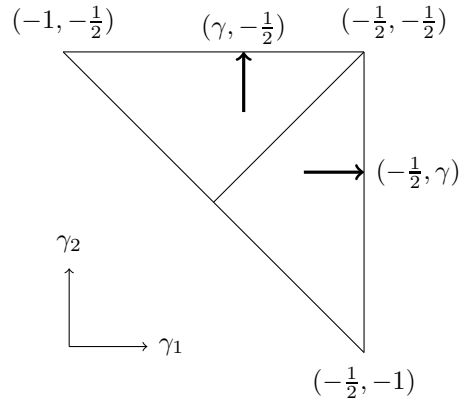
\begin{figure}[t,h]
\centering
\begin{tikzpicture}[scale=8]
\draw[->] (-0.99,-0.99) -- (-0.86,-0.99);
\draw[->] (-0.99,-0.99) -- (-0.99,-0.86);
\node at (-0.82,-0.99) {\small $\gamma_1$};
\node at (-0.99,-0.82) {\small $\gamma_2$};
\draw (-1/2,-1/2)-- (-1,-1/2);
\draw (-1,-1/2)-- (-1/2,-1);
\draw (-1/2,-1)-- (-1/2,-1/2);
\draw (-3/4,-3/4)-- (-1/2,-1/2);
\node at (-0.45, -0.45) {\small $(-\frac{1}{2},-\frac{1}{2})$};
\node at (-1/2,-1.05) {\small $(-\frac{1}{2},-1)$};
\node at (-1, -0.45) {\small $(-1,-\frac{1}{2})$};
\draw[->,very thick] (-0.6,-0.7)--(-1/2,-0.7);
\draw[->,very thick] (-0.7,-0.6)--(-0.7,-0.5);
\node at (-0.7,-0.46) {\small $(\gamma,-\frac{1}{2})$};
\node at (-0.41,-0.7) {\small $(-\frac{1}{2},\gamma)$};
\end{tikzpicture}
\caption{Illustration of limit taking in Theorem \ref{Thm:main edge}}\label{fig:thm edge}
\medskip
\end{figure}

\begin{Rem}\label{Rem:physical}
The convergence (\ref{eq:boundary one}) is more involved since $W B_{\gamma+3/2}(t)$ is a self-similar process with stationary increments having self-similarity parameter $H=\gamma+3/2\in (1/2,1)$, and hence displays long-range dependence.  This convergence may be understood heuristically as follows: $Z_{\gamma_1,\gamma_2}(t)$ in (\ref{eq:ori gen Ros proc}) can be regarded as an integrated process of a long-range dependent bilinear moving average of white noise. This bilinear moving average involves a double summation. As the exponent $\gamma_1\rightarrow -1/2$, the corresponding summation yields a term which is extremely persistent, so that it behaves like a frozen Gaussian variable which is independent of the fractional noise defined through the other summation. 
\end{Rem}
\begin{Rem}
Although intuitively the generalized Rosenblatt processes $Z_{\gamma_1,\gamma_2}(t)$ in (\ref{eq:ori gen Ros proc}) form a richer class than the Rosenblatt process $Z_{\gamma}(t)$ in (\ref{eq:Rosenblatt proc}), they are both self-similar with stationary increments, and hence have the same covariance (\ref{eq:fbm cov}) when $2\gamma=\gamma_1+\gamma_2$. To show that they are different processes, one can compare the higher moments, as was done in  \citet{bai:taqqu:2013:6-structure}. 
The convergence (\ref{eq:boundary one}) provides another evidence that there are values of $(\gamma_1,\gamma_2)$ for which $Z_{\gamma_1,\gamma_2}(t)$ is different from $Z_{\gamma}(t)$. Indeed the limit  $W  B_{\gamma+3/2}(t)$  has a symmetric marginal distribution (the so-called  product-normal distribution), while the marginal distribution of the Rosenblatt process $Z_{\gamma}(t)$  is  skewed with a nonzero third cumulant (see (10) and (12) of \citet{veillette:taqqu:2013:properties}, or set $\gamma_1=\gamma_2=\gamma$ in (\ref{eq:mu_3}) below). 
\end{Rem}

Note that in Theorem \ref{Thm:main diag} and \ref{Thm:main edge}, we exclude the three corners $(\gamma_1,\gamma_2)=(-1/2,-1/2)$, $(-1,-1/2)$ and $(-1/2,-1)$. It turns out that the limit behavior of $Z_{\gamma_1,\gamma_2}(t)$ at these corners depends on the direction these corners are approached. 
Due to the symmetry of $Z_{\gamma_1,\gamma_2}(t)$ in $(\gamma_1,\gamma_2)$, it is sufficient to focus on the case $\gamma_1\ge \gamma_2$, that is, we focus on the subregion of $\Del$ in (\ref{eq:region}) delimited by line segments $e_1,d$ and $m$ in Figure \ref{fig:thm corner (-1/2,-1)}.

Consider first the corner $(\gamma_1,\gamma_2)=(-1/2,-1)$. We will approach it through the line 
\[
\gamma_2=\frac{1}{\rho-1}(\gamma_1+1/2)-1, 
\]
which can also be expressed as
\[
\frac{\gamma_1+\gamma_2+3/2}{\gamma_2+1}=\rho.
\]
The line passes through the corner $(-1/2,-1)$ and has a negative slope of $1/(\rho-1)$, $0\le \rho\le  1$. 
 See Figure \ref{fig:thm corner (-1/2,-1)}. When $\rho=0$, the line coincides with the diagonal edge $d$ of the triangle $\Del$, which has slope $-1$. When $\rho=1$, the line coincides with the vertical side $e_1$ of $\Del$, which has slope $-\infty$.
\begin{figure}[t,h]
\centering
\begin{tikzpicture}[scale=8]
\draw (-1/2,-1/2)-- (-1,-1/2);
\draw (-1,-1/2)-- (-1/2,-1);
\draw (-1/2,-1)-- (-1/2,-1/2);
\draw (-3/4,-3/4)-- (-1/2,-1/2);
\node at (-0.45, -0.45) {\small $(-\frac{1}{2},-\frac{1}{2})$};
\node at (-1/2,-1.05) {\small $(-\frac{1}{2},-1)$};
\node at (-1, -0.45) {\small $(-1,-\frac{1}{2})$};
\draw[->,very thick] (-1/2-0.05,-1+0.1)--(-1/2-0.013,-1+0.035);
\node at (-0.6,-0.65) {\small $m$};
\node at (-0.45,-0.75) {\small $e_1$};
\node at (-0.75,-0.46) {\small $e_2$};
\node at (-0.68,-0.89) {\small $d$};
\node at (-0.845,-1.01) {\small $\gamma_1$};
\node at (-1.01,-0.85) {\small $\gamma_2$};
\draw[->] (-1.01,-1.01) -- (-1.01,-0.87);
\draw[->] (-1.01,-1.01) -- (-0.87,-1.01);
\node at (-0.87,-0.77) {\small $(-\frac{3}{4},-\frac{3}{4})$};
\end{tikzpicture}
\caption{Illustration of limit taking in Theorem \ref{Thm:main corner (-1/2,-1)}}\label{fig:thm corner (-1/2,-1)}
\end{figure}
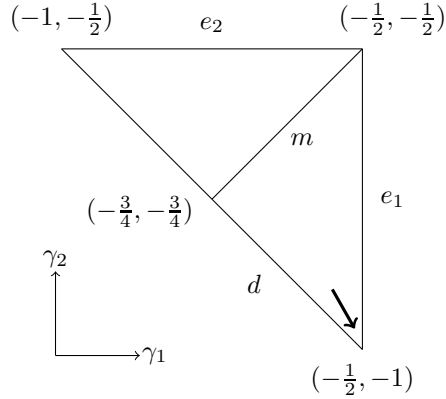

\begin{Thm}[The corner $(\gamma_1,\gamma_2)=(-1/2,-1)$]\label{Thm:main corner (-1/2,-1)} \ 
\\
Let $Z_{\gamma_1,\gamma_2}(t)$, $(\gamma_1,\gamma_2)\in \Del$, be defined in (\ref{eq:gen Rosenblatt proc}) with $A=A(\gamma_1,\gamma_2)$ in (\ref{eq:A}).  Suppose that $\gamma_1\ge \gamma_2$.
If $(\gamma_1,\gamma_2)\rightarrow (-1/2,-1)$ in  such a way that
\begin{equation}\label{eq:rho limit corner 1}
\frac{\gamma_1+\gamma_2+3/2}{\gamma_2+1}=1+\frac{\gamma_1+1/2}{\gamma_2+1}\rightarrow \rho \in [0,1],
\end{equation}
then
\begin{equation}\label{eq:X_rho}
Z_{\gamma_1,\gamma_2}(t)\Rightarrow X_\rho(t):= \rho^{1/2}  W B(t)+(1-\rho)^{1/2}B'(t),
\end{equation}
where $W$ is a standard normal random variable, $B(t)$ and $B'(t)$ are standard Brownian motions, and $W$, $B(t)$ and $B'(t)$ are independent.
\end{Thm}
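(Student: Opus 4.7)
The plan is to use the method of moments. Since $Z_{\gamma_1,\gamma_2}(t)=I_2(h_t)$ lies in the second Wiener chaos and the limit $X_\rho(t)=\rho^{1/2}WB(t)+(1-\rho)^{1/2}B'(t)$ is a polynomial of degree at most $2$ in a finite-dimensional Gaussian family (hence uniquely determined by its moments), it suffices to show that all joint cumulants of $(Z_{\gamma_1,\gamma_2}(t_1),\ldots,Z_{\gamma_1,\gamma_2}(t_n))$ converge to those of $(X_\rho(t_1),\ldots,X_\rho(t_n))$, and then to establish tightness in $C[0,1]$ separately.

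\textbf{Second cumulant.} By self-similarity of index $H=\gamma_1+\gamma_2+2$ and the standardization $\Var[Z_{\gamma_1,\gamma_2}(1)]=1$,
\[
\E[Z_{\gamma_1,\gamma_2}(s)Z_{\gamma_1,\gamma_2}(t)]=\tfrac{1}{2}\bigl(s^{2H}+t^{2H}-|t-s|^{2H}\bigr)\longrightarrow s\wedge t
\]
along (\ref{eq:rho limit corner 1}), since $H\to 1/2$. This matches $\E[X_\rho(s)X_\rho(t)]=\rho(s\wedge t)+(1-\rho)(s\wedge t)=s\wedge t$, independent of $\rho\in[0,1]$, the cross-term vanishing by independence of $W,B,B'$.

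\textbf{Higher cumulants.} For any symmetric kernel $f$,
\[
\kappa_k(I_2(f))=2^{k-1}(k-1)!\int_{\mathbb{R}^k}f(x_1,x_2)f(x_2,x_3)\cdots f(x_k,x_1)\,dx_1\cdots dx_k.
\]
Plugging in the symmetrized $h_t$, expanding over the $2^k$ patterns of $\gamma_1$ versus $\gamma_2$ exponents chosen around the cycle, and successively integrating out the $x_i$'s via the Beta-function identity
\[
\int_{\mathbb{R}}(s-x)_+^{a}(u-x)_+^{b}\,dx=B(-a-b-1,\,b+1)\,|s-u|^{a+b+1}
\]
(valid for $a,b\in(-1,-1/2)$) reduces each term to an explicit product of Beta functions times $t^{kH}$, with the normalization $A(\gamma_1,\gamma_2)$ in (\ref{eq:A}) absorbing the leading divergences. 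Using the parametrization implied by (\ref{eq:rho limit corner 1}), equivalently $\gamma_1+1/2=(\rho-1)(\gamma_2+1)+o(\gamma_2+1)$, one expects
\[
\kappa_k(Z_{\gamma_1,\gamma_2}(t))\longrightarrow \rho^{k/2}\,\kappa_k(WB(t)),\qquad k\ge 3,
\]
which coincides with $\kappa_k(X_\rho(t))$ since $B'(t)$ is Gaussian and hence has vanishing cumulants of order $\ge 3$. Joint cumulants at several times are handled by linearity, applying the same cyclic formula to the kernel $\sum_i\lambda_i h_{t_i}$.

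\textbf{Tightness and main obstacle.} Because $Z_{\gamma_1,\gamma_2}$ lives in a fixed Wiener chaos, hypercontractivity gives $\E|Z_{\gamma_1,\gamma_2}(t)-Z_{\gamma_1,\gamma_2}(s)|^4\le C\bigl(\Var[Z_{\gamma_1,\gamma_2}(t-s)]\bigr)^2=C|t-s|^{4H}$ with $C$ independent of $(\gamma_1,\gamma_2)$; since $4H\to 2>1$ along the ray, Kolmogorov--Chentsov yields tightness in $C[0,1]$ uniformly near the corner. The principal obstacle is the cumulant asymptotics in the previous paragraph: the cyclic sum decomposes into many terms of competing asymptotic sizes, and the reweighting needed to produce precisely the $\rho^{k/2}$ prefactor---reflecting the ``frozen variable'' mechanism of Remark \ref{Rem:physical} in the $\gamma_1\to -1/2$ direction, combined with the Gaussianization coming from $\gamma_1+\gamma_2\to -3/2$---requires careful bookkeeping of Beta-function behavior in the joint regime, uniform in $\rho\in[0,1]$ so that the endpoint cases $\rho=1$ and $\rho=0$ consistently recover Theorems \ref{Thm:main edge} and \ref{Thm:main diag} respectively.
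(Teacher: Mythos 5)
Your overall strategy --- method of moments on the second Wiener chaos, plus tightness via hypercontractivity and Kolmogorov's criterion --- is exactly the paper's approach, and your second-cumulant and tightness arguments are fine. But the proof has a genuine gap at precisely the point you flag as the ``main obstacle'': the higher-cumulant asymptotics are asserted (``one expects $\kappa_k \to \rho^{k/2}\kappa_k(WB(t))$'') rather than proven, and this is where essentially all the work lies. Concretely, two things are missing. First, you need the sharp asymptotics of the dominant cyclic integrals in the joint regime $\gamma_1\to-1/2$, $\gamma_2\to-1$: for $m$ even the leading $\sigma$-patterns are the alternating ones $(1,2,1,2,\ldots)$ and $(2,1,2,\ldots)$, whose integrand is $\prod_j |s_{2j-1}-s_{2j-2}|^{2\gamma_1+1}|s_{2j}-s_{2j-1}|^{2\gamma_2+1}$ with exponents tending simultaneously to $0$ and $-1$. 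The paper isolates this in a separate lemma showing
\[
\int_{[0,t_1]\times\cdots\times[0,t_m]}\prod |x_{2j-1}-x_{2j-2}|^{a}|x_{2j}-x_{2j-1}|^{b}\,d\mathbf{x}\;\sim\;(b+1)^{-m/2}\prod_{i \text{ even}}\bigl(t_i+t_{i-1}-|t_i-t_{i-1}|\bigr)
\]
as $(a,b)\to(0,-1)$, and the proof of the upper bound is not routine: the cyclic structure has to be broken by a Cauchy--Schwarz step before one can integrate variables out one at a time. Second, you must show every non-alternating $\sigma$-pattern is negligible, i.e.\ of order $(-2\gamma_1-1)^{-r}(\gamma_2+1)^{-m}$ with $r<m/2$, so that after multiplying by $A(\gamma_1,\gamma_2)^m\sim(\gamma_1+\gamma_2+3/2)^{m/2}(-2\gamma_1-1)^{m/2}(\gamma_2+1)^{m/2}$ only the alternating terms survive and produce the factor $\bigl[(\gamma_1+\gamma_2+3/2)/(\gamma_2+1)\bigr]^{m/2}\to\rho^{m/2}$. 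The odd-$m$ case needs its own count (at most $(m-1)/2$ alternations) to conclude the odd cumulants vanish. None of this bookkeeping is carried out in your proposal.

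Two smaller points. Your one-line Beta identity $\int(s-x)_+^a(u-x)_+^b\,dx=\mathrm{B}(-a-b-1,b+1)|s-u|^{a+b+1}$ is wrong as stated when $a\ne b$: the coefficient is $\mathrm{B}(a+1,-a-b-1)$ on the event $u>s$ and $\mathrm{B}(b+1,-a-b-1)$ on the event $s>u$, and since the dominant versus subdominant $\sigma$-patterns are distinguished exactly by which exponents pair up, getting this right is part of the bookkeeping. Also, the claim that the limit law is moment-determinate (so that the method of moments is legitimate) is true for second-order polynomials in Gaussians but deserves a justification or a citation (the paper cites Slud); it is not automatic for elements of the second chaos.
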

\begin{Rem}
In Theorem \ref{Thm:main corner (-1/2,-1)}, the limit $X_\rho(t)$ is an independent linear combination of the two limits obtained  in Theorem \ref{Thm:main edge} and \ref{Thm:main diag} (edges $e_1$ and $d$), after setting   $\gamma=-1$ in Theorem \ref{Thm:main edge}. Note that since $\gamma+3/2=-1+3/2=1/2$, the fractional Brownian motion $B_{\gamma+3/2}(t)$ in Theorem \ref{Thm:main edge} becomes Brownian motion $B(t)$.
\end{Rem}

Consider now the corner $(\gamma_1,\gamma_2)=(-1/2,-1/2)$. We will approach it
through the line 
\[
\gamma_2=\frac{1}{\rho}(\gamma_1+1/2)-1/2, 
\]
which passes through it and has a positive slope of $1/\rho$, $0\le \rho\le 1$. See Figure \ref{fig:thm corner (-1/2,-1/2)}. When $\rho=0$, the line coincides with the vertical side $e_1$ of $\Del$, which has slope $+\infty$.  When $\rho=1$, the line coincides with the middle line $m$, which has slope $1$.

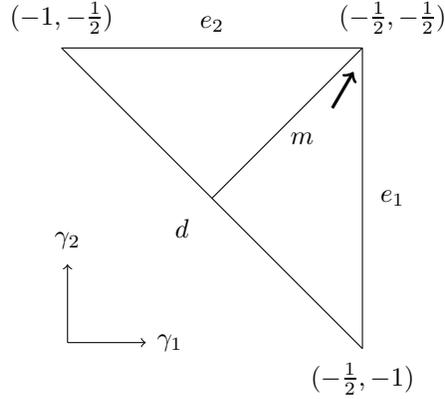
\begin{figure}
\centering
\begin{tikzpicture}[scale=8]
\draw[->] (-0.99,-0.99) -- (-0.86,-0.99);
\draw[->] (-0.99,-0.99) -- (-0.99,-0.86);
\node at (-0.82,-0.99) {\small $\gamma_1$};
\node at (-0.99,-0.82) {\small $\gamma_2$};
\draw (-1/2,-1/2)-- (-1,-1/2);
\draw (-1,-1/2)-- (-1/2,-1);
\draw (-1/2,-1)-- (-1/2,-1/2);
\draw (-3/4,-3/4)-- (-1/2,-1/2);
\node at (-0.45, -0.45) {\small $(-\frac{1}{2},-\frac{1}{2})$};
\node at (-1/2,-1.05) {\small $(-\frac{1}{2},-1)$};
\node at (-1, -0.45) {\small $(-1,-\frac{1}{2})$};
\draw[->,very thick] (-1/2-0.05,-1/2-0.1)--(-1/2-0.015,-1/2-0.04);
\node at (-0.6,-0.65) {\small $m$};
\node at (-0.45,-0.75) {\small $e_1$};
\node at (-0.75,-0.46) {\small $e_2$};
\node at (-0.8,-0.8) {\small $d$};
\end{tikzpicture}
\caption{Illustration of limit taking in Theorem \ref{Thm:main corner (-1/2,-1/2)}}\label{fig:thm corner (-1/2,-1/2)}
\end{figure}
\begin{Thm}[The corner $(\gamma_1,\gamma_2)=(-1/2,-1/2)$]\label{Thm:main corner (-1/2,-1/2)}\
\\
Let $Z_{\gamma_1,\gamma_2}(t)$, $(\gamma_1,\gamma_2)\in \Del$, be defined in (\ref{eq:gen Rosenblatt proc}) with $A=A(\gamma_1,\gamma_2)$ in (\ref{eq:A}). Suppose that $\gamma_1\ge \gamma_2$.
If $(\gamma_1,\gamma_2)\rightarrow (-1/2,-1/2)$ in such a  way that
\begin{equation}\label{eq:rho limit corner 2}
\frac{\gamma_1+1/2}{\gamma_2+1/2}\rightarrow \rho \in [0,1],
\end{equation}
then
\begin{align}\label{eq:Y_rho}
&Z_{\gamma_1,\gamma_2}(t)\Rightarrow  Y_\rho(t)\notag\\ &=t \cdot \left[\frac{(\rho+1)^{-1}+(2\sqrt{\rho})^{-1}}{\sqrt{2(\rho+1)^{-2}+(2\rho)^{-1}}} \cdot X_1+ \frac{(\rho+1)^{-1}-(2\sqrt{\rho})^{-1}}{\sqrt{2(\rho+1)^{-2}+(2\rho)^{-1}}}\cdot X_2\right],
\end{align}
where $X_1$ and $X_2$ two independent standardized  chi-squared random variables with one degree of freedom (with mean $0$ and variance $1$). The case $\rho=0$ is understood as the limit as $\rho\rightarrow 0$.
\end{Thm}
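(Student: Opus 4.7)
The candidate limit $Y_\rho(t)=tY_\rho(1)$ has purely linear sample paths, so its law on $C[0,1]$ is supported on a one-dimensional subspace. The plan is to split the proof into three ingredients: (i) marginal convergence $Z_{\gamma_1,\gamma_2}(1)\ConvD Y_\rho(1)=a_1X_1+a_2X_2$, where $a_1,a_2$ are the bracketed coefficients in (\ref{eq:Y_rho}); (ii) a linearization statement $\E[(Z_{\gamma_1,\gamma_2}(t)-tZ_{\gamma_1,\gamma_2}(1))^2]\to 0$ uniformly in $t\in[0,1]$; and (iii) tightness in $C[0,1]$. Items (i) and (ii) together yield finite-dimensional convergence by Slutsky's theorem (the fdd limit at any $t_1<\cdots<t_n$ being $(t_1Y_\rho(1),\dots,t_nY_\rho(1))$), and combined with (iii) this gives the desired weak convergence.

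\textbf{Linearization and tightness.} For (ii), since $Z_{\gamma_1,\gamma_2}$ is self-similar with stationary increments and unit variance, it shares the covariance (\ref{eq:fbm cov}) of fractional Brownian motion with Hurst index $H=\gamma_1+\gamma_2+2\to 1$. Direct computation followed by a Taylor expansion of $a^{2H}$ around $H=1$ gives
\[
\E[(Z_{\gamma_1,\gamma_2}(t)-tZ_{\gamma_1,\gamma_2}(1))^2]=t(1-t)\bigl[t^{2H-1}+(1-t)^{2H-1}-1\bigr]=O(1-H)
\]
uniformly on $[0,1]$. For (iii), the Kolmogorov criterion applied to $\E|Z_{\gamma_1,\gamma_2}(t)-Z_{\gamma_1,\gamma_2}(s)|^4=c_4(\gamma_1,\gamma_2)|t-s|^{4H}$ (using stationary increments and self-similarity) yields tightness, since $4H>2$ throughout the approach and $c_4(\gamma_1,\gamma_2)$ remains uniformly bounded by the cumulant analysis below.

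\textbf{Marginal convergence via cumulants.} For (i) I would use that $Z_{\gamma_1,\gamma_2}(1)=I_2(f_{\gamma_1,\gamma_2,1})$ lies in the second Wiener chaos, where the $p$-th cumulant ($p\ge 2$) has the closed-form trace representation
\[
\kappa_p=2^{p-1}(p-1)!\int_{\mathbb{R}^p}f(x_1,x_2)f(x_2,x_3)\cdots f(x_p,x_1)\,dx_1\cdots dx_p.
\]
Expanding each of the $p$ symmetrized kernels in (\ref{eq:gen Rosenblatt proc}) into its two monomials (producing $2^p$ terms indexed by a sequence $T\in\{A,B\}^p$) and integrating out $x_1,\dots,x_p$ via the elementary identity
\[
\int_{\mathbb{R}}(s-x)_+^\alpha(t-x)_+^\beta\,dx=C_{\alpha,\beta,\mathrm{sgn}(s-t)}\,|s-t|^{\alpha+\beta+1}\qquad(\alpha,\beta>-1,\ \alpha+\beta<-1),
\]
with $C_{\cdot,\cdot,\pm}$ an explicit Beta value, reduces $\kappa_p$ to a finite sum over $T$ of $p$-fold integrals on $[0,1]^p$ of $\prod_j|s_{j-1}-s_j|^{e_j(T)}$ with exponents $e_j(T)\in\{2\gamma_1+1,\gamma_1+\gamma_2+1,2\gamma_2+1\}$. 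The $p=2$ case with $\kappa_2=1$ pins down $A(\gamma_1,\gamma_2)$.

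\textbf{Asymptotic analysis (main obstacle).} As $(\gamma_1,\gamma_2)\to(-1/2,-1/2)$ the spatial exponents $e_j\to 0$ so the $[0,1]^p$ integrals tend to $1$, while each Beta coefficient develops a simple $\Gamma$-function pole: in $-\gamma_1-\gamma_2-1\to 0^+$ at ``matching'' edges (positions with $T_{j-1}=T_j$) and in $-2\gamma_i-1\to 0^+$ at ``switching'' edges ($T_{j-1}\ne T_j$). Writing $-\gamma_i-1/2=\epsilon_i\downarrow 0$ with $\epsilon_1/\epsilon_2\to\rho$, every pattern contributes a term of order $\epsilon_2^{-p}$, and $A$ turns out to be of order $\epsilon_2$, so that each $\kappa_p$ has a finite limit. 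The main obstacle is the combinatorial bookkeeping of verifying that the $\rho$-dependent residues sum, over all $2^p$ patterns, to exactly
\[
\kappa_p\longrightarrow 2^{p/2-1}(p-1)!\,(a_1^p+a_2^p)=\kappa_p[Y_\rho(1)],
\]
with the factors $(\rho+1)^{-1}$ and $(2\sqrt{\rho})^{-1}$ in $a_1,a_2$ tracing, respectively, to the residue of the ``matching'' pole ($-\gamma_1-\gamma_2-1=\epsilon_1+\epsilon_2\sim\epsilon_2(\rho+1)$) and to the geometric mean of the ``switching'' poles ($(-2\gamma_1-1)(-2\gamma_2-1)=4\epsilon_1\epsilon_2\sim 4\epsilon_2^2\rho$). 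Once the cumulant convergence is established, distributional convergence in (i) follows because $Y_\rho(1)$, being a bounded linear combination of two independent standardized $\chi^2(1)$ variables, is determined by its moment sequence.
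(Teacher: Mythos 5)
Your proposal is correct in substance, and its core — the method of moments for the marginal, via the trace formula for cumulants on the second chaos, the beta-function identity to integrate out the $x$'s, and the residue analysis of the matching poles $(-\gamma_1-\gamma_2-1)^{-1}\sim(\epsilon_2(\rho+1))^{-1}$ versus the switching poles $(-2\gamma_i-1)^{-1}$ — is exactly the paper's argument. Where you genuinely diverge is in the treatment of the finite-dimensional distributions: the paper applies the Cram\'er--Wold device and computes the joint cumulants $\kappa_m(\sum_i c_iZ_{\gamma_1,\gamma_2}(t_i))$ directly from Proposition \ref{Pro:joint cum}, whereas you reduce everything to the marginal at $t=1$ via the linearization $\E[(Z_{\gamma_1,\gamma_2}(t)-tZ_{\gamma_1,\gamma_2}(1))^2]=t(1-t)[t^{2H-1}+(1-t)^{2H-1}-1]=O(1-H)$ and Slutsky. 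Your identity and the uniform $O(1-H)$ bound are correct (write the bracket as $(t^{2H-1}-t)+((1-t)^{2H-1}-(1-t))$ and maximize each term), and this is arguably the cleaner route for this particular theorem, since the limit is degenerate; the paper's heavier joint-cumulant machinery is set up once and amortized over all four main theorems, where the limits are not linear in $t$. Your tightness argument via the fourth moment is the same as the paper's Lemma \ref{Lem:tight} (which gets the uniform bound on $\E Z_{\gamma_1,\gamma_2}(1)^4$ from hypercontractivity rather than from the cumulant asymptotics, but either works).

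Two points you flag but do not carry out deserve naming. First, the "combinatorial bookkeeping" you identify as the main obstacle is resolved in the paper by a specific counting lemma (Lemma \ref{Lem:count}): since the pattern is cyclic, the number of switches is necessarily even, say $2r$, and there are exactly $2\binom{p}{2r}$ patterns with $r$ switch-pairs of each type; combined with the identity $(a+b)^p+(a-b)^p=2\sum_{0\le r\le p/2}\binom{p}{2r}a^{2r}b^{p-2r}$ with $a=(2\sqrt{\rho})^{-1}$, $b=(\rho+1)^{-1}$, this produces precisely $2^{p/2-1}(p-1)!(a_1^p+a_2^p)$. Without this count the residue sum does not close. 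Second, your asymptotics $4\epsilon_1\epsilon_2\sim 4\epsilon_2^2\rho$ degenerate at $\rho=0$; there the only surviving pattern class is $r=p/2$ (so odd cumulants vanish in the limit), and one must either treat this case separately or verify, as the paper does, that the resulting limit agrees with the $\rho\to0$ limit of the general formula. You should also justify that the $[0,1]^p$ integrals of $\prod_j|s_{j-1}-s_j|^{e_j(T)}$ tend to the Lebesgue volume as the $e_j\to0$; this needs a domination argument (the paper's Lemma \ref{Lem:bound circular}, resting on a power-counting lemma), not just pointwise convergence of the integrand.
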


\begin{Rem}
Since by (\ref{eq:hurst}), the self-similarity parameter $H$ equals $\gamma_1+\gamma_2+2$, we get that $H$ tends to $1$ as $(\gamma_1,\gamma_2)\rightarrow (-1/2,-1/2)$. It is known (see e.g., Theorem 3.1.1 of \citet{embrechts:maejima:2002:selfsimilar}) that the only self-similar finite-variance processes with stationary increments having  $H=1$  are degenerate processes. We see this  in Theorem \ref{Thm:main corner (-1/2,-1/2)}, where the limit is a random variable multiplied by $t$.
\end{Rem}

\begin{Rem}
In Theorem \ref{Thm:main corner (-1/2,-1/2)}, if $\rho=1$, $Y_\rho(t)$ reduces to $tX_1$, where $X_1$ is a standardized chi-squared random variable with one degree of freedom. Consider now the standardized Rosenblatt process $Z_\gamma(t)$ in (\ref{eq:Rosenblatt proc}). In this case, $\gamma_1=\gamma_2=\gamma$ and thus $\rho=1$, which corresponds to the middle line $m$ in Figure \ref{fig:thm corner (-1/2,-1/2)}. From Theorem \ref{Thm:main corner (-1/2,-1/2)}, we conclude that if $\gamma\rightarrow -1/2$, then the limit is $tX_1$. This is consistent with a previous result of  \citet{veillette:taqqu:2013:properties}, that the limit  is a standardized chi-squared random variable when $t=1$.
\end{Rem}
\begin{Rem}
If $\rho=0$, $Y_\rho(t)= \frac{t}{\sqrt{2}}(X_1-X_2)$, which has the same distribution as $t \left(W B\right)$, where $W$ and $B$ are two independent standard normal random variables (see (\ref{eq:independent normal}) below). This
is consistent with Theorem \ref{Thm:main edge}, where on the edge $e_1$ the limit is $W B_{\gamma+3/2}$. This tends, as $\gamma\rightarrow -1/2$, to $W\cdot B_1(t)=W\cdot B \cdot t=t(WB)$, where $B$ is a standard Gaussian random variable.  
\end{Rem}

\begin{Rem}
Theorems \ref{Thm:main diag} to \ref{Thm:main corner (-1/2,-1/2)} are consistent with Theorem 3.1 of
\citet{nourdin:poly:2012:convergence}, stating that the limit of a double Wiener-It\^o integral can only be a linear combination of a normal and an independent double Wiener-It\^o integral.
\end{Rem}

\begin{Rem}
Theorem \ref{Thm:main corner (-1/2,-1)} and \ref{Thm:main corner (-1/2,-1/2)} concern the limit behavior of $Z_{\gamma_1,\gamma_2}(t)$ as $(\gamma_1,\gamma_2)$ approaches the corners along some straight-line direction. What happens if one does not approach the corners following a straight-line direction? Then, there will be no convergence.  To see this, consider the case of Theorem \ref{Thm:main corner (-1/2,-1)} (a similar argument can be made  for Theorem \ref{Thm:main corner (-1/2,-1/2)}). Let 
\[
\rho(\gamma_1,\gamma_2)=\frac{\gamma_1+\gamma_2+3/2}{\gamma_2+1}\in (0,1)
\]
parameterize the straight-line direction. Suppose that
 $\rho(\gamma_1,\gamma_2)$ does not converge as $(\gamma_1,\gamma_2)$ approaches the  corner $(-\frac{1}{2},-1)$. Then there are two subsequences of $(\gamma_1,\gamma_2)$, such that $\rho(\gamma_1,\gamma_2)$ of the first subsequence converges to $\rho_1$  and $\rho(\gamma_1,\gamma_2)$ of the second subsequence converges to $\rho_2$, with $\rho_1\neq \rho_2$. By  Theorem \ref{Thm:main corner (-1/2,-1)},  the corresponding processes $Z_{\gamma_1,\gamma_2}(t)$  converge to two different limits.  Therefore, the original process $Z_{\gamma_1,\gamma_2}(t)$  does not converge if $(\gamma_1,\gamma_2)$ does not follow a straight-line direction.
\end{Rem}

\section{Proof of the main theorems}\label{sec:proof}

Since we will use a method of moments,  we state first a cumulant formula for a linear combination of $Z_{\gamma_1,\gamma_2}(t)$ at finite time points.
We let $\kappa_m(\cdot)$  denote the $m$-th cumulant. In the following proposition, the constant $A$ in (\ref{eq:gen Rosenblatt proc}) is arbitrary. 
\begin{Pro}\label{Pro:joint cum}
The $m$-th cumulant ($m\ge 2$) of $\sum_{i=1}^n c_i Z_{\gamma_1,\gamma_2}(t_i)$, $c_i\in \mathbb{R}$, $t_i\in [0,\infty)$, equals
\begin{equation}\label{eq:joint cum gen Ronsen}
\kappa_m\left(\sum_{i=1}^n c_i Z_{\gamma_1,\gamma_2}(t_i)\right)=\frac{1}{2}(m-1)!A^m  C_m(\gamma_1,\gamma_2;\mathbf{t},\mathbf{c}),
\end{equation}
where
\begin{align}
C_m(\gamma_1,\gamma_2;\mathbf{t},\mathbf{c})=\sum_{\sigma \in \{1,2\}^m}&\sum_{i_1,\ldots,i_m=1}^n  c_{i_1}\ldots c_{i_m} \int_0^{t_{i_1}}ds_1\ldots \int_0^{t_{i_m}}ds_m  \notag\\ \prod_{j=1}^m \notag
 \Big[ \notag 
&
(s_j-s_{j-1})_+^{\gamma_{\sigma_j}+\gamma_{\sigma_{j-1}'}+1}\B(\gamma_{\sigma_{j-1}'}+1,-\gamma_{\sigma_j}-\gamma_{\sigma_{j-1}'}-1)   \notag\\  +&(s_{j-1}-s_j)_+^{\gamma_{\sigma_j}+\gamma_{\sigma_{j-1}'}+1} \B(\gamma_{\sigma_{j}}+1,-\gamma_{\sigma_j}-\gamma_{\sigma_{j-1}'}-1) 
\Big] ,\label{eq:C(t1,...,tk)}
\end{align}
where 
\begin{equation}\label{eq:beta fun}
\B(x,y)=\int_0^1 u^{x-1}(1-u)^{y-1}du =\int_0^\infty w^{x-1}(1+w)^{-x-y} dw,~~ x,y>0,
\end{equation}
is the beta function,  the sum runs over $\sigma=(\sigma_1,\ldots,\sigma_m)$ with $\sigma_i=1$ or $2$, and $\sigma'$ is the complement of $\sigma$, namely, $\sigma'_i=1$ if $\sigma_i=2$ and $\sigma_i'=2$ if $\sigma_i=1$, $i=1,\ldots,m$. Moreover $\sigma'_0=\sigma'_m$ and $s_0=s_m$, $i=1,\ldots,m$.
\end{Pro}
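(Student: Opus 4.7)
The strategy is to reduce the statement to the standard cumulant (trace/diagram) formula for a double Wiener--It\^o integral, and then carry out the inner $x$-integrations by a single elementary beta identity.

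First, by linearity of the stochastic integral and the symmetrized representation (\ref{eq:gen Rosenblatt proc}),
\[
Y:=\sum_{i=1}^n c_i Z_{\gamma_1,\gamma_2}(t_i)=\int_{\mathbb{R}^2}{}' g(x_1,x_2)\,B(dx_1)B(dx_2),
\]
where the symmetric kernel
\[
g(x_1,x_2)=\frac{A}{2}\sum_{i=1}^n c_i\int_0^{t_i}\bigl[(s-x_1)_+^{\gamma_1}(s-x_2)_+^{\gamma_2}+(s-x_1)_+^{\gamma_2}(s-x_2)_+^{\gamma_1}\bigr]ds
\]
lies in $L^2(\mathbb{R}^2)$. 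For any symmetric $g\in L^2(\mathbb{R}^2)$ and any $m\ge 2$, the $m$-th cumulant of the associated double Wiener--It\^o integral admits the cyclic-trace representation
\[
\kappa_m=2^{m-1}(m-1)!\int_{\mathbb{R}^m}g(x_1,x_2)g(x_2,x_3)\cdots g(x_{m-1},x_m)g(x_m,x_1)\,dx_1\cdots dx_m,
\]
a standard consequence of the diagram formula for multiple Wiener--It\^o integrals. Pulling $(A/2)^m$ out of the $m$ copies of $g$ combines with the $2^{m-1}(m-1)!$ to give precisely the prefactor $\tfrac12(m-1)!\,A^m$ in (\ref{eq:joint cum gen Ronsen}), so it remains to identify the surviving integral with $C_m(\gamma_1,\gamma_2;\mathbf t,\mathbf c)$.

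To that end, I would expand each factor $g(x_j,x_{j+1})$ (indices taken cyclically modulo $m$) into its two symmetrized halves, indexed by $\sigma_j\in\{1,2\}$: writing $\sigma_j':=3-\sigma_j$, this half is $c_{i_j}\int_0^{t_{i_j}}(s_j-x_j)_+^{\gamma_{\sigma_j}}(s_j-x_{j+1})_+^{\gamma_{\sigma_j'}}\,ds_j$. By Fubini, the trace integral becomes an $m$-fold sum over $\sigma\in\{1,2\}^m$ and over $(i_1,\ldots,i_m)\in\{1,\ldots,n\}^m$, in which the $x$-variables assemble into
\[
\prod_{j=1}^m(s_{j-1}-x_j)_+^{\gamma_{\sigma_{j-1}'}}(s_j-x_j)_+^{\gamma_{\sigma_j}},
\]
under the cyclic identifications $s_0=s_m$ and $\sigma_0'=\sigma_m'$ stated in the proposition. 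Since each $x_j$ sits in exactly one such factor, the $x$-integrations decouple.

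Each individual $x_j$-integral is then evaluated via the identity
\begin{align*}
\int_{\mathbb R}(s-x)_+^{\gamma_a}(s'-x)_+^{\gamma_b}\,dx
&=\B(\gamma_a+1,-\gamma_a-\gamma_b-1)(s'-s)_+^{\gamma_a+\gamma_b+1}\\
&\quad{}+\B(\gamma_b+1,-\gamma_a-\gamma_b-1)(s-s')_+^{\gamma_a+\gamma_b+1},
\end{align*}
obtained by restricting the domain to $x<\min(s,s')$, substituting $w=(s-x)/(s-s')$ when $s>s'$ and $w=(s'-x)/(s'-s)$ when $s<s'$, and invoking the representation (\ref{eq:beta fun}) of $\B$; the hypotheses $\gamma_1,\gamma_2\in(-1,-1/2)$ ensure $\gamma_a+1,\gamma_b+1>0$ and $-\gamma_a-\gamma_b-1>0$, so both beta integrals converge. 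Applying it with $(\gamma_a,\gamma_b)=(\gamma_{\sigma_{j-1}'},\gamma_{\sigma_j})$ reproduces exactly the bracketed sum inside the product in (\ref{eq:C(t1,...,tk)}), and the proof is complete. The only delicate point is purely combinatorial: one must keep track, around the cyclic product of $m$ factors, of which of $\{\gamma_1,\gamma_2\}$ attaches to each side of each $x$-coordinate after the symmetrizations, so that the boundary conventions $s_0=s_m$, $\sigma_0'=\sigma_m'$ faithfully encode the trace structure; no analytic difficulty arises beyond the beta-integral step itself.
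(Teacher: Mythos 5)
Your proposal is correct and follows essentially the same route as the paper's proof: reduce to the cyclic trace formula $\kappa_m=2^{m-1}(m-1)!\int g(x_1,x_2)\cdots g(x_m,x_1)\,d\mathbf{x}$ for the symmetric kernel, expand the symmetrization over $\sigma\in\{1,2\}^m$ so each $x_j$ appears in exactly two adjacent factors, and evaluate the decoupled $x_j$-integrals with the beta identity $\int_{\mathbb R}(s_1-u)_+^{a}(s_2-u)_+^{b}\,du=(s_2-s_1)_+^{a+b+1}\B(a+1,-a-b-1)+(s_1-s_2)_+^{a+b+1}\B(b+1,-a-b-1)$. The bookkeeping of the prefactor $2^{m-1}(m-1)!\,(A/2)^m=\tfrac12(m-1)!\,A^m$ and of the cyclic conventions $s_0=s_m$, $\sigma_0'=\sigma_m'$ matches the paper exactly.
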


Proposition \ref{Pro:joint cum} is an extension of Theorem 2.1 of \citet{bai:taqqu:2013:6-structure}.
We shall use the following cumulant formula for a double Wiener-It\^o integral (see, e.g., (8.4.3) of
\citet{nourdin:peccati:2012:normal}):
\begin{Lem}\label{Lem:double integral cumulant}
If $f$ is a symmetric function in $L^2(\mathbb{R}^2)$, then the $m$-th cumulant of the double Wiener-It\^o integral $X=\int_{\mathbb{R}^2}' f(y_1,y_2)B(dy_1) B(dy_2)$ is given by the following circular integral:
\[
\kappa_m(X)=2^{m-1}(m-1)!  \int_{\mathbb{R}^m} f(y_1,y_2)f(y_2,y_3)\ldots f(y_{m-1},y_m)f(y_m,y_1) dy_1\ldots dy_m.
\]  
\end{Lem}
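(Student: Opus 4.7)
The plan is to diagonalize the kernel $f$ via the spectral theorem and reduce the double Wiener--It\^{o} integral to a sum of independent centered chi-squares, for which cumulants are elementary, and then recognize the resulting spectral sum as the circular integral.

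\textbf{Step 1: Spectral decomposition of $f$.}
Since $f\in L^2(\mathbb{R}^2)$ is symmetric, the integral operator $T_f$ defined by $(T_f g)(y)=\int_\mathbb{R} f(y,z) g(z)\,dz$ is a self-adjoint Hilbert--Schmidt operator on $L^2(\mathbb{R})$. By the spectral theorem, there exist an orthonormal system $\{e_k\}_{k\ge 1}\subset L^2(\mathbb{R})$ and real eigenvalues $\{\lambda_k\}$ with $\sum_k \lambda_k^2=\|f\|_{L^2(\mathbb{R}^2)}^2<\infty$ such that
\[
f(y_1,y_2)=\sum_k \lambda_k\, e_k(y_1)e_k(y_2)
\]
with convergence in $L^2(\mathbb{R}^2)$.

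\textbf{Step 2: Reduction to independent chi-squares.}
Set $\xi_k=\int_\mathbb{R} e_k(y)\,B(dy)$. Orthonormality of the $e_k$'s and the It\^{o} isometry make $\{\xi_k\}$ a sequence of i.i.d.\ $N(0,1)$ variables. Using the linearity and continuity of $I_2$ on $L^2(\mathbb{R}^2)$, together with the identity $I_2(e_k\otimes e_k)=\xi_k^2-1$ (the standard second-order Hermite relation for double It\^{o} integrals of a tensor square of a unit-norm function),
\[
X=I_2(f)=\sum_k \lambda_k(\xi_k^2-1),
\]
the series converging in $L^2(\Omega)$ thanks to $\sum \lambda_k^2<\infty$.

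\textbf{Step 3: Cumulant of a centered chi-square.}
The moment generating function of $\xi^2-1$ for $\xi\sim N(0,1)$ is $e^{-t}(1-2t)^{-1/2}$, so its cumulant generating function is
\[
-t-\tfrac{1}{2}\log(1-2t)=\sum_{m\ge 2}\frac{(2t)^m}{2m},
\]
yielding $\kappa_m(\xi^2-1)=2^{m-1}(m-1)!$ for $m\ge 2$. By independence of the $\xi_k$'s and cumulant additivity,
\[
\kappa_m(X)=\sum_k \lambda_k^m\,\kappa_m(\xi_k^2-1)=2^{m-1}(m-1)!\,\sum_k \lambda_k^m.
\]

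\textbf{Step 4: Identify the spectral sum with the circular integral.}
Iterating $T_f$, the kernel of $T_f^m$ is
\[
K_m(y_1,y_{m+1})=\int f(y_1,y_2)f(y_2,y_3)\cdots f(y_m,y_{m+1})\,dy_2\cdots dy_m.
\]
Taking the trace via the orthonormal basis $\{e_k\}$ (completed if needed), $\sum_k\lambda_k^m=\sum_k\langle T_f^m e_k,e_k\rangle=\operatorname{Tr}(T_f^m)=\int K_m(y_1,y_1)\,dy_1$, which is exactly the stated circular integral. Combining with Step 3 gives the formula.

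The main technical point to pin down is the trace-class justification in Step 4 for $m=2$: there $\sum_k\lambda_k^2=\int f(y_1,y_2)^2\,dy_1 dy_2$ follows directly from Parseval, while for $m\ge 3$ one uses $|\lambda_k|\to 0$ to get $|\lambda_k|^m\le \|T_f\|_{\mathrm{op}}^{\,m-2}\lambda_k^2$, making both the spectral sum and the circular integral absolutely convergent; Fubini then legitimates the interchange of sum and integral that identifies them. Everything else is standard Hilbert-space and Gaussian-chaos bookkeeping.
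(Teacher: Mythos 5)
Your proof is correct, but it is worth noting that the paper does not prove this lemma at all: it is quoted directly from (8.4.3) of Nourdin and Peccati (2012), where the cumulant formula is established for multiple Wiener--It\^o integrals of \emph{arbitrary} order via the combinatorics of contractions and diagrams. Your argument is the classical second-chaos-specific route: diagonalize the symmetric Hilbert--Schmidt kernel, reduce $I_2(f)$ to the weighted sum $\sum_k\lambda_k(\xi_k^2-1)$ of independent centered chi-squares, compute $\kappa_m(\xi^2-1)=2^{m-1}(m-1)!$ from the explicit cumulant generating function, and identify $\sum_k\lambda_k^m=\operatorname{Tr}(T_f^m)$ with the circular integral. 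What this buys is a completely elementary and self-contained derivation, plus the explicit spectral representation of the law of $I_2(f)$ as a mixture of chi-squares (which is in the same spirit as the paper's repeated use of the decomposition $WZ=\frac12(Z_1^2-Z_2^2)$ and of moment-determinacy of second-order polynomials in Gaussians); what it gives up is any possibility of extension beyond the second chaos, since it hinges on viewing a symmetric kernel of two variables as a self-adjoint operator. Two small points you should make explicit to be fully rigorous: (i) to pass the cumulants through the $L^2(\Omega)$ limit of the series you need convergence of all moments, which follows from hypercontractivity (all $L^p$ norms are equivalent on a fixed Wiener chaos, cf.\ Lemma~\ref{Lem:hypercontract} of the paper), or equivalently you can compute $\kappa_m$ of the finite partial sums exactly and let the number of terms tend to infinity; and (ii) the identity $\operatorname{Tr}(T_f^m)=\int K_m(y,y)\,dy$ should not be read as evaluating an $L^2$ kernel on the measure-zero diagonal, but rather obtained by writing $\operatorname{Tr}(T_f^m)=\langle T_f^{m-1},T_f\rangle_{\mathrm{HS}}=\int\int K_{m-1}(x,y)f(y,x)\,dx\,dy$, which is exactly the circular integral and is absolutely convergent for $m\ge 2$ by the bound $\|T_{|f|}^m\|_{\mathrm{tr}}\le\|f\|_{L^2}^m$.
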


%\begin{Lem}\label{Lem:useful 2}
%For $a,b>-1$  and $x<y$,  
%\begin{equation}\label{eq:useful 2}
%\int_x^y (u-x)^{a}(y-u)^b  du =(y-x)^{a+b+1} \B(a+1,b+1).
%\end{equation}
%\end{Lem}
%\begin{proof}
%\begin{align*}
%\int_x^y (u-x)^{a}(y-u)^b du& = (y-x)^{a+b+1} \int_x^y \left(\frac{u-x}{y-x}\right)^a \left(\frac{y-u}{y-x}\right)^b d\left(\frac{u}{y-x}\right)
%\\ &=(y-x)^{a+b+1} \int_0^1 w^a (1-w)^b dw.
%\end{align*}
%\end{proof}

\begin{proof}[Proof of Proposition \ref{Pro:joint cum}]

Set 
\[g(x,y)=\frac{A}{2}(x_+^{\gamma_1}y_+^{\gamma_2}+x_+^{\gamma_2}y_+^{\gamma_1}).\] Let 
\[h_t(x,y)=\int_0^t g(s-x,s-y) ds,\] 
and observe that $h_t$ is symmetric. So using the linearity of the Wiener-It\^o integral and Lemma \ref{Lem:double integral cumulant}, we have

\begin{align*}
\kappa_m\left(\sum_{i=1}^n  c_i Z_{\gamma_1,\gamma_2}(t_i)\right)=&\kappa_m\left(\int_{\mathbb{R}^2}' \sum_{i=1}^n c_i h_{t_i}(x_1,x_2)    B(dx_1)B(dx_2) \right)\notag\\
= &2^{m-1}(m-1)! \int_{\mathbb{R}^m}  d\mathbf{x}\prod_{j=1}^m \left[ \sum_{i=1}^n c_i h_{t_i}(x_j,x_{j+1})\right]\notag\\
=&2^{m-1}(m-1)!\sum_{i_1,\ldots,i_m=1}^n  c_{i_1}\ldots c_{i_m} \int_{\mathbb{R}^m} d\mathbf{x} \prod_{j=1}^m \int_0^{t_{i_j}}g(s_j-x_j,s_j-x_{j+1})ds_j,
\end{align*}
and hence
\begin{align}
&\kappa_m\left(\sum_{i=1}^n c_i Z_{\gamma_1,\gamma_2}(t_i)\right)
=\frac{1}{2}(m-1)!  A^m \sum_{i_1,\ldots,i_m=1}^n  c_{i_1}\ldots c_{i_m}  \notag\\ & \times \int_0^{t_{i_1}} ds_1\ldots \int_0^{t_{i_m}}ds_m \Big(~ \int_{\mathbb{R}^m} \prod_{j=1}^m [(s_{j}-x_{j})_+^{\gamma_1} (s_{j}-x_{j+1})_+^{\gamma_2}+(s_{j}-x_{j})_+^{\gamma_2} (s_{j}-x_{j+1})_+^{\gamma_1}]d\mathbf{x}~\Big),\label{eq:cumulant intermediate}
\end{align}
where we  view the index $j$ as modulo $m$, e.g., $x_{m+1}=x_1$.

Then using the notation in the statement of Proposition \ref{Pro:joint cum}, one has
\begin{align*}
I:=&\int_{\mathbb{R}^m} \prod_{j=1}^m \left[(s_{j}-x_{j})_+^{\gamma_1} (s_{j}-x_{j+1})_+^{\gamma_2}+(s_{j}-x_{j})_+^{\gamma_2} (s_{j}-x_{j+1})_+^{\gamma_1}\right]d\mathbf{x}\notag
\\= & \sum_{\sigma\in \{1,2\}^m}  \int_{\mathbb{R}^m}  \prod_{j=1}^{m}(s_{j}-x_j)_+^{\gamma_{\sigma_j}}(s_{j}-x_{j+1})_+^{\gamma_{\sigma_{j}'}} d\mathbf{x}\notag\\
=&\sum_{\sigma\in \{1,2\}^m} \int_{\mathbb{R}^m}  \prod_{j=1}^{m} (s_{j}-x_j)_+^{\gamma_{\sigma_j}}(s_{j-1}-x_{j})_+^{\gamma_{\sigma_{j-1}'}} d\mathbf{x},\notag\\
\end{align*}
and thus
\begin{align}
I=&\sum_{\sigma \in \{1,2\}^m}\prod_{j=1}^m 
 \Big[  
(s_j-s_{j-1})_+^{\gamma_{\sigma_j}+\gamma_{\sigma_{j-1}'}+1}\B(\gamma_{\sigma_{j-1}'}+1,-\gamma_{\sigma_j}-\gamma_{\sigma_{j-1}'}-1) \notag 
\\ &\qquad\qquad~~+(s_{j-1}-s_j)_+^{\gamma_{\sigma_j}+\gamma_{\sigma_{j-1}'}+1} \B(\gamma_{\sigma_{j}}+1,-\gamma_{\sigma_j}-\gamma_{\sigma_{j-1}'}-1) 
\Big],\label{eq:cum last}
\end{align}
where we have used the following relation valid for $a,b\in (-1,-1/2)$:
\begin{equation}\label{eq:int formula}
\int_{\mathbb{R}} (s_1-u)^a_+ (s_2-u)^b_+ du= (s_2-s_1)_+^{a+b+1} B(a+1,-a-b-1) + (s_1-s_2)_+^{a+b+1}\B(b+1,-a-b-1). 
\end{equation}
(See Lemma 3.2 of \citet{bai:taqqu:2013:6-structure}.) Substituting (\ref{eq:cum last}) into (\ref{eq:cumulant intermediate}), equation (\ref{eq:joint cum gen Ronsen}) is obtained. 
\end{proof}

Note that $\E Z_{\gamma_1,\gamma_2}(1)=0$ by the property of Wiener-It\^o integral, and hence the second and the third moments coincide with the second and the third cumulants.
As  two special cases of Proposition \ref{Pro:joint cum}, one has the following explicit formulas for the second and the third moment  of the generalized Rosenblatt distribution (\citet{bai:taqqu:2013:6-structure}, Theorem 2.1):

The second moment of $Z_{\gamma_1,\gamma_2}(1)$ is
\begin{align}
\mu_2(\gamma_1,\gamma_2)  =&  \frac{A^2}{(\gamma_1+\gamma_2+2)(2(\gamma_1+\gamma_2)+3)}\notag\\& \times
\Big[\B(\gamma_{1}+1,-\gamma_{1}-\gamma_{2}-1) \B(\gamma_{2}+1,-\gamma_{1}-\gamma_{2}-1)\notag\\&\quad+ \B(\gamma_{1}+1,-2\gamma_{1}-1) \B(\gamma_{2}+1,-2\gamma_{2}-1)\Big],\label{eq:mu_2}
\end{align}
The third moment of $Z_{\gamma_1,\gamma_2}(1)$ is
\begin{align}
\mu_3(\gamma_1,\gamma_2)=& \frac{2A^3}{(\gamma_1+\gamma_2+2)(3(\gamma_1+\gamma_2)+5)}~\notag \\&\times\Big[ \sum_{\sigma\in \{1,2\}^3}
B(\gamma_{\sigma_1}+1,-\gamma_{\sigma_1}-\gamma_{\sigma_3'}-1)
 B(\gamma_{\sigma_1'}+1,-\gamma_{\sigma_1'}-\gamma_{\sigma_2}-1)
\nonumber \\& 
\quad\times    B(\gamma_{\sigma_2'}+1,-\gamma_{\sigma_2'}-\gamma_{\sigma_3}-1)B(\gamma_{\sigma_1'}+\gamma_{\sigma_2}+2,\gamma_{\sigma_2'}+\gamma_{\sigma_3}+2)\Big]\label{eq:mu_3}.
\end{align}
%Figure \ref{fig:contour} displays a contour plot of the third moment $\mu_3(\gamma_1,\gamma_2)$ in (\ref{eq:mu_3}). 

To standardize $Z_{\gamma_1,\gamma_2}(t)$, we set $\mu_2(\gamma_1,\gamma_2)=1$. By (\ref{eq:mu_2}), this determines the constant $A$ as:
\begin{align}
A(\gamma_1,\gamma_2)=& \Big[(\gamma_1+\gamma_2+2)(2(\gamma_1+\gamma_2)+3)\Big]^{1/2}~\notag\\& \times \Big[\B(\gamma_{1}+1,-\gamma_{1}-\gamma_{2}-1) \B(\gamma_{2}+1,-\gamma_{1}-\gamma_{2}-1)\notag\\
&\quad~~+\B(\gamma_{1}+1,-2\gamma_{1}-1) \B(\gamma_{2}+1,-2\gamma_{2}-1)\Big]^{-1/2}.\label{eq:A}
\end{align}
 
%\begin{figure}
%\centering
%\includegraphics[scale=0.9]{M3C}
%\caption{Contour plot of $\mu_3(\gamma_1,\gamma_2)$. \\{\small
%Boundaries are given by the lines $\gamma_1=-1/2$, $\gamma_2=-1/2$ and $\gamma_1+\gamma_2=-3/2$. The third moment $\mu_3$ vanishes on the diagonal $\gamma_1+\gamma_2=-3/2$. }}\label{fig:contour}
%\end{figure}

\subsection{Proof of Theorem \ref{Thm:main diag}}

We will use a result for bounding integral of powers of linear functions in Euclidean space. First some notation. Let $L_1(\mathbf{s})=\langle\mathbf{w}_1 , \mathbf{s}\rangle,\ldots,L_m(\mathbf{s})=\langle\mathbf{w}_m , \mathbf{s}\rangle$ be linear functions on $\mathbb{R}^n$, where $\langle \cdot, \cdot \rangle$ denotes the Euclidean inner product.  Let
\[
P(\mathbf{s})=\prod_{j=1}^m|L_j(\mathbf{s})|^{\alpha_j}.
\]
Set $T=\{\mathbf{w}_1,\ldots,\mathbf{w}_m\}$. For any nonempty $W\subset T$, define 
\begin{equation}\label{eq:S(W)}
S(W)=T\cap \mathrm{span}\{W\},
\end{equation}
where $\mathrm{span}\{W\}$ denotes linear subspace spanned by $W$, and define the quantity
\[
d(P,W)=|W|+ \sum_{j:\mathbf{w}_j\in S(W)} \alpha_j,
\]
where $|W|$ is the cardinality of the set $W$. 
Then we have the following so-called power counting lemma:
\begin{Lem}[Theorem 3.1 of \citet{fox:taqqu:1987:central}
]\label{Lem:power counting}
Suppose that  
\begin{equation}\label{eq:d(P,W)}
d(P,W)> 0.
\end{equation}
 for any  $W\subset T$ which  consists of linearly independent $\mathbf{w}_j$'s\footnote{Theorem 3.1 of \citet{fox:taqqu:1987:central} states that it is enough to consider $W\subset T$ consisting of linearly independent $\mathbf{w}_j$'s with \emph{negative} exponent $\alpha_j$'s. This is because the non-negative exponents $\alpha_j$ cannot make the integral $\int_{[0,1]^n} P(\mathbf{s}) d\mathbf{s}$ blow up.}. Then
\[
\int_{[0,1]^n} P(\mathbf{s}) d\mathbf{s}<\infty. 
\]
\end{Lem}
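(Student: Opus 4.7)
The plan is to prove this power counting lemma by combining a dyadic decomposition of the unit cube with a geometric volume estimate, following a Littlewood--Paley style argument. The key insight is that the integral can only diverge from small values of the $|L_j(\mathbf{s})|$, and the dyadic decomposition localizes these singularities in a way that exposes the role of the quantity $d(P,W)$.

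First I would partition $[0,1]^n$ (up to a set of measure zero) into dyadic regions
\[
R_{\mathbf{k}}=\bigl\{\mathbf{s}\in[0,1]^n:\,|L_j(\mathbf{s})|\in(2^{-k_j-1},2^{-k_j}]\text{ for all }j=1,\ldots,m\bigr\},\qquad \mathbf{k}\in\mathbb{Z}_{\ge 0}^m.
\]
On each $R_{\mathbf{k}}$ the integrand $P(\mathbf{s})$ is comparable to $\prod_j 2^{-\alpha_j k_j}$, so the problem reduces to a sharp bound on $\mathrm{vol}(R_{\mathbf{k}})$ and verifying that
\[
\sum_{\mathbf{k}}\mathrm{vol}(R_{\mathbf{k}})\,\prod_{j=1}^m 2^{-\alpha_j k_j}<\infty
\]
under the hypothesis \eqref{eq:d(P,W)}.

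Next I would estimate $\mathrm{vol}(R_{\mathbf{k}})$ geometrically. Given $\mathbf{k}$, extract a maximal linearly independent subset $W=\{\mathbf{w}_{j_1},\ldots,\mathbf{w}_{j_r}\}\subset T$ with $r=\dim\mathrm{span}\{\mathbf{w}_1,\ldots,\mathbf{w}_m\}$. A linear change of coordinates sends these to $r$ standard coordinates, and since $R_{\mathbf{k}}$ is contained in the product of slabs $\{|\langle\mathbf{w}_{j_i},\mathbf{s}\rangle|\le 2^{-k_{j_i}}\}$, one obtains
\[
\mathrm{vol}(R_{\mathbf{k}})\le C\,\prod_{i=1}^{r}2^{-k_{j_i}}.
\]
Crucially, for any other $\mathbf{w}_\ell\in S(W)=T\cap\mathrm{span}\{W\}$, the quantity $\langle\mathbf{w}_\ell,\mathbf{s}\rangle$ is a linear combination of the $\langle\mathbf{w}_{j_i},\mathbf{s}\rangle$, hence already controlled by the $k_{j_i}$; this forces a compatibility constraint relating $k_\ell$ to $\{k_{j_i}\}$ which must be used to eliminate those summation indices.

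Finally I would sum over $\mathbf{k}$. After splitting the index set according to which maximal independent $W$ is chosen (a finite combinatorial decomposition), the resulting sum factors into a product of one-sided geometric series whose convergence is governed by the exponent $|W|+\sum_{j:\mathbf{w}_j\in S(W)}\alpha_j=d(P,W)$: a factor $2^{-k_{j_i}}$ from the volume bound for each $\mathbf{w}_{j_i}\in W$ contributes the $|W|$ term, and every $\alpha_j$ with $\mathbf{w}_j\in S(W)$ contributes an $\alpha_j$ term after the constraint $k_j$ is expressed in terms of the free $k_{j_i}$'s. The hypothesis $d(P,W)>0$ for every linearly independent $W$ yields convergence of each piece. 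The main obstacle is the combinatorial bookkeeping: one must show that the dyadic index set can be organized (via the sign- and size-ordering of the $|L_j|$'s) into finitely many cones on each of which a single choice of maximal independent $W$ controls every volume estimate simultaneously, and that the bookkeeping of constraints $\mathbf{w}_\ell\in S(W)$ correctly accumulates exactly the exponents appearing in $d(P,W)$.
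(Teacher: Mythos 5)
The paper does not prove this lemma at all: it is imported verbatim as Theorem 3.1 of Fox and Taqqu (1987), so there is no internal proof to compare against. Your dyadic-decomposition strategy is nevertheless the classical route to power-counting theorems, and the architecture (localize $|L_j|\asymp 2^{-k_j}$, bound $\mathrm{vol}(R_{\mathbf{k}})$ by a product over an independent subset, sum geometric series) is the right one.

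That said, as written the proposal defers precisely the step that constitutes the lemma. Two points need to be made honest. First, the independent set $W$ cannot be chosen once and for all: it must be selected greedily adapted to $\mathbf{k}$, processing the indices in decreasing order of $k_j$ and keeping $\mathbf{w}_j$ only if it is not in the span of those already kept; otherwise the volume bound $C\prod_i 2^{-k_{j_i}}$ is useless (you could be keeping the slabs with the largest $|L_j|$). Second, the sum over $\mathbf{k}$ does not ``factor into a product of one-sided geometric series'': after grouping by the sort order $k_{\pi(1)}\ge\cdots\ge k_{\pi(m)}$ it is a nested sum over a simplicial cone, and carrying it out from the innermost index outward shows that convergence requires the positivity of every accumulated exponent $e_p=|W\cap U_p|+\sum_{j\in U_p}\alpha_j$ for each initial segment $U_p=\{\pi(1),\dots,\pi(p)\}$ --- not merely of $d(P,W)$ for the final maximal $W$. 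This is exactly where the hypothesis for \emph{all} linearly independent subsets is consumed: after discarding the factors with $\alpha_j\ge 0$ (which are bounded on $[0,1]^n$), the greedy selection guarantees $U_p\subseteq\{j:\mathbf{w}_j\in S(W\cap U_p)\}$, whence $e_p\ge d(P,W\cap U_p)>0$ because the omitted $\alpha_j$'s are negative. Without this identification your ``compatibility constraints'' do not obviously accumulate to $d(P,W)$, and indeed the naive sum over a non-selected index $k_\ell$ with $\alpha_\ell<0$ would diverge if one only used the volume bound coming from $W$. So the plan is salvageable and standard, but the acknowledged ``combinatorial bookkeeping'' is the entire content of the proof, and one concrete assertion in the sketch (independent factorization of the series) is not correct as stated.
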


\begin{Lem}\label{Lem:bound circular}
The function  
\begin{equation}\label{eq:circular int}
f(\alpha_1,\ldots,\alpha_m):= \int_{[0,1]^m} |s_1-s_m|^{\alpha_1} |s_2-s_1|^{\alpha_2}\ldots |s_m-s_{m-1}|^{\alpha_m} d\mathbf{s}
\end{equation}
is finite and continuous on the domain 
\begin{equation}\label{eq:domain D}
D=\left\{(\alpha_1,\ldots,\alpha_m): ~\alpha_i >-1,~\sum_{i=1}^m \alpha_i+m >1\right\}.
\end{equation}
\end{Lem}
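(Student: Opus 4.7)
The plan is to deduce finiteness from the power counting lemma (Lemma \ref{Lem:power counting}) and continuity from dominated convergence.

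For finiteness, I would identify the linear forms in (\ref{eq:circular int}) as $L_j(\mathbf{s}) = s_j - s_{j-1}$ with the convention $s_0 = s_m$, corresponding to vectors $\mathbf{w}_j = e_j - e_{j-1}$ in $\mathbb{R}^m$ (with $e_0 = e_m$). These $m$ vectors satisfy the unique (up to scalar) nontrivial linear relation $\sum_{j=1}^m \mathbf{w}_j = 0$, so $T = \{\mathbf{w}_1,\ldots,\mathbf{w}_m\}$ spans an $(m-1)$-dimensional subspace and every subset of $T$ of cardinality at most $m-1$ is linearly independent (such a subset sits inside the basis obtained by deleting any single $\mathbf{w}_j$). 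The crucial observation is that for $W \subset T$ with $|W| = k \leq m-2$, no $\mathbf{w}_j \in T \setminus W$ can lie in $\mathrm{span}(W)$, for otherwise $W \cup \{\mathbf{w}_j\}$ would be a linearly dependent subset of size at most $m-1$. Hence $S(W) = W$ and
\[
d(P,W) = k + \sum_{j:\mathbf{w}_j \in W} \alpha_j > k - k = 0,
\]
using $\alpha_j > -1$. When $|W| = m-1$, $\mathrm{span}(W)$ coincides with the full $(m-1)$-dimensional subspace containing $T$, so $S(W) = T$ and
\[
d(P,W) = (m-1) + \sum_{j=1}^m \alpha_j,
\]
which is positive precisely when $\sum_j \alpha_j + m > 1$, exactly the second condition defining the domain $D$. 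Lemma \ref{Lem:power counting} then yields $f(\alpha_1,\ldots,\alpha_m) < \infty$ throughout $D$.

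For continuity at a point $\boldsymbol{\alpha}^{(0)} \in D$, I would choose $\epsilon > 0$ small enough that $\alpha_i^{(0)} - \epsilon > -1$ for every $i$ and $\sum_i (\alpha_i^{(0)} - \epsilon) + m > 1$, so that the componentwise shifted point still lies in $D$. Since $|s_i - s_{i-1}| \leq 1$ on $[0,1]^m$, the map $\alpha \mapsto |s_i - s_{i-1}|^\alpha$ is nonincreasing, so for every $\boldsymbol{\alpha}$ with $|\alpha_i - \alpha_i^{(0)}| \leq \epsilon$ the integrand in (\ref{eq:circular int}) is bounded pointwise by $\prod_{j=1}^m |s_j - s_{j-1}|^{\alpha_j^{(0)} - \epsilon}$. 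By the finiteness argument just established, this dominating function is integrable, so the dominated convergence theorem delivers continuity of $f$ at $\boldsymbol{\alpha}^{(0)}$.

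The main obstacle will be the combinatorial verification that the circular structure $\sum_j \mathbf{w}_j = 0$ forces $S(W)$ to collapse to the two cases $|W| \leq m-2$ and $|W| = m-1$ above. Once this is in hand, the former condition becomes automatic from $\alpha_j > -1$ while the latter reproduces line-for-line the summation condition in $D$, so the power counting criterion (\ref{eq:d(P,W)}) matches the domain definition exactly and the continuity step is routine.
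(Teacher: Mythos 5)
Your proposal is correct and follows essentially the same route as the paper's proof: finiteness via the Fox--Taqqu power counting lemma with the same identification of the vectors $\mathbf{w}_j=e_j-e_{j-1}$ and the same case split $|W|\le m-2$ versus $|W|=m-1$, and continuity via monotonicity of the integrand in the exponents plus dominated convergence. Your justification that $S(W)=W$ for $|W|\le m-2$ (using that the only linear relation is $\sum_j\mathbf{w}_j=0$) is a slightly more explicit version of what the paper states, but the argument is the same.
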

\begin{proof}
We first show that $f(\alpha_1,\ldots,\alpha_m)<\infty$ on $D$ using Lemma \ref{Lem:power counting}. Following the notation introduced for the lemma, we have $L_1(\mathbf{s})=s_1-s_m$, $L_2(\mathbf{s})=s_2-s_1$,\ldots, $L_m(\mathbf{s})=s_m-s_{m-1}$, and hence $\mathbf{w}_1=(1,0,\ldots,0,-1)$, $\mathbf{w}_2=(-1,1,0,\ldots,0)$, \ldots, $\mathbf{w}_m=(0,\ldots,0,-1,1)$ and $T=\{\mathbf{w}_1,\ldots,\mathbf{w}_m\}$.

It is easy to see that  a subset $W\subset T$ consists of linearly independent $\mathbf{w}_j$'s if and only if $|W|\le m-1$. When $|W|\le m-2$, the set $S(W)$ defined in (\ref{eq:S(W)}) is equal to $W$. 
The condition (\ref{eq:d(P,W)}) is satisfied in this case because each $\alpha_j>-1$ and hence
\[
D(P,W)=|W|+ \sum_{j:\mathbf{w}_j\in S(W)} \alpha_j~>~ |W|+ \sum_{j:\mathbf{w}_j\in W} (-1) =|W|-|W|=0.
\] 
When $|W|=m-1$, one has $\mathrm{span}(W)=T$, and hence $S(W)=T$. Thus the condition (\ref{eq:d(P,W)}) in this case becomes 
\[
D(P,W)=m-1+\sum_{i=1}^m \alpha_i>0,
\]
which is satisfied in view of (\ref{eq:domain D}). Hence the integral $f(\alpha_1,\ldots,\alpha_m)$ in  (\ref{eq:circular int}) is finite by Lemma \ref{Lem:power counting}.

To verify the continuity of $f(\alpha_1,\ldots,\alpha_m)$, suppose that as $n\rightarrow\infty$, $\boldsymbol{\alpha}_n\rightarrow \boldsymbol{\alpha}:=(\alpha_1,\ldots,\alpha_m)$. Then for large $n$, $\boldsymbol{\alpha}_n\ge \boldsymbol{\alpha}_\epsilon:=(\alpha_1-\epsilon,\ldots,\alpha_m-\epsilon)$, where the small $\epsilon$ is chosen such that $\boldsymbol{\alpha}_\epsilon\in D$.
Denote  the integrand in (\ref{eq:circular int}) by $I(\mathbf{s};\boldsymbol{\alpha})$, and recall that $I(\mathbf{s};\boldsymbol{\alpha})$ is decreasing in every component of $\boldsymbol{\alpha}$. Hence when $n$ is large,
$I(\mathbf{s};\boldsymbol{\alpha}_n)\le I(\mathbf{s};\boldsymbol{\alpha}_\epsilon)$.
Since $I(\mathbf{s};\boldsymbol{\alpha}_\epsilon)$ is integrable, we can  apply the Dominated Convergence Theorem to obtain the convergence $f(\boldsymbol{\alpha}_n)\rightarrow f(\boldsymbol{\alpha})$ as $n\rightarrow\infty$, proving the continuity.
\end{proof}

In the following corollary, the exponents are supposed to be away from the boundary of the set $D$ defined in (\ref{eq:domain D}).
\begin{Cor}\label{Cor:bound circular}
Let $C_1,C_2$ be two fixed constants such that $C_1>-1$ and $C_2>1$. 
Then the function $f(\alpha_1,\ldots,\alpha_m)$ defined in (\ref{eq:circular int})
is bounded on the domain
\[
D(C_1,C_2)=\left\{(\alpha_1,\ldots,\alpha_m): ~\alpha_i \ge C_1,~\sum_{i=1}^m \alpha_i+m \ge C_2\right\}.
\]
\end{Cor}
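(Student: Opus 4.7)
The plan is to exploit the pointwise monotonicity of the integrand in (\ref{eq:circular int}) in its exponents and then reduce to a compact subset of $D$ on which Lemma~\ref{Lem:bound circular} applies. For every $\mathbf{s}\in[0,1]^m$, each base $|s_j-s_{j-1}|$ lies in $[0,1]$, and $x\mapsto x^\alpha$ is nonincreasing in $\alpha$ on $[0,1]$. Hence the integrand in (\ref{eq:circular int}) is nonincreasing in each $\alpha_j$, and so is $f(\alpha_1,\ldots,\alpha_m)$. Consequently, to bound $f$ on $D(C_1,C_2)$, it suffices to find, for each $\boldsymbol{\alpha}\in D(C_1,C_2)$, a componentwise smaller vector $\boldsymbol{\beta}(\boldsymbol{\alpha})$ that lies in some fixed compact subset of $D$, and then invoke the continuity asserted in Lemma~\ref{Lem:bound circular}.

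To build $\boldsymbol{\beta}$, set $\Delta:=\max\{0,\,C_2-m(C_1+1)\}\ge 0$. Given $\boldsymbol{\alpha}\in D(C_1,C_2)$, the nonnegative quantities $\alpha_j-C_1$ satisfy
\[
\sum_{j=1}^m(\alpha_j-C_1)=\sum_{j=1}^m\alpha_j-mC_1\ge (C_2-m)-mC_1\ge\Delta,
\]
so one may pick $\lambda_j\in[0,\alpha_j-C_1]$ with $\sum_{j=1}^m\lambda_j=\Delta$. Setting $\beta_j:=C_1+\lambda_j$ gives $C_1\le\beta_j\le C_1+\Delta$, $\beta_j\le\alpha_j$, and
\[
\sum_{j=1}^m\beta_j+m=m(C_1+1)+\Delta=\max\{m(C_1+1),\,C_2\}>1,
\]
so that $\boldsymbol{\beta}$ lies in the set
\[
\Gamma:=\Bigl\{\boldsymbol{\beta}\in[C_1,C_1+\Delta]^m:\ \sum_{j=1}^m\beta_j+m\ge\max\{m(C_1+1),C_2\}\Bigr\},
\]
which is a compact subset of $D$ (each coordinate is bounded, all coordinates exceed $-1$ since $C_1>-1$, and the sum constraint exceeds $1$). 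By monotonicity, $f(\boldsymbol{\alpha})\le f(\boldsymbol{\beta})\le \sup_\Gamma f$, and the right-hand side is finite because $f$ is continuous on $D$ by Lemma~\ref{Lem:bound circular} and $\Gamma$ is a compact subset of $D$, yielding the desired uniform bound.

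The main conceptual obstacle is that $D(C_1,C_2)$ is itself not compact, since individual exponents $\alpha_j$ may go to $+\infty$; therefore continuity on $D$ does not directly give a uniform bound. The monotonicity step is precisely what handles this unbounded direction, while the subtlety in constructing $\boldsymbol{\beta}$ is the need to distribute the ``mass'' $\Delta$ among the $\lambda_j$'s so that the hyperplane constraint $\sum_j\beta_j+m>1$ defining $D$ is preserved when the components are pushed down towards $C_1$. Once $\Delta$ is set as above, this distribution is always feasible thanks to the membership of $\boldsymbol{\alpha}$ in $D(C_1,C_2)$.
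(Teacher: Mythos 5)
Your proof is correct and follows essentially the same strategy as the paper's: exploit that $f$ is nonincreasing in each exponent, dominate each point of $D(C_1,C_2)$ by a componentwise-smaller point lying in a fixed compact subset of $D$, and invoke the continuity from Lemma \ref{Lem:bound circular}. The only difference is cosmetic: the paper truncates the coordinates from above at a large constant $M$, whereas you push them down toward $C_1$ while redistributing the mass $\Delta$ to preserve the sum constraint — your version makes explicit a small verification the paper leaves implicit.
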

\begin{proof}
Let $M$ be a large positive constant. Define
\begin{align*}
D_M(C_1,C_2)&= D(C_1,C_2)\cap (-\infty, M]^m\\
&=\left\{(\alpha_1,\ldots,\alpha_m): ~C_1\le \alpha_i \le M,~\sum_{i=1}^m \alpha_i+m \ge C_2\right\}.
\end{align*}
Since $D_M(C_1,C_2)$ is a compact subset of $D$ in (\ref{eq:domain D}), and $f(\alpha_1,\ldots,\alpha_m)$ is continuous on $D$ by Lemma \ref{Lem:bound circular}, we deduce that $f$ is bounded on $D_M(C_1,C_2)$. The boundedness on $D(C_1,C_2)$ follows since $f$ decreases  when any $\alpha_i$ increases.
\end{proof}

\begin{Lem}\label{Lem:bound A}
Let $A(\gamma_1,\gamma_2)$ be as in (\ref{eq:A}), where $(\gamma_1,\gamma_2)\in \Del$ which is defined in (\ref{eq:region}). Then there exits a constant $C>0$ independent of $\gamma_1$ and $\gamma_2$ such that
\[
|A(\gamma_1,\gamma_2)|\le C[2(\gamma_1+\gamma_2)+3]^{1/2}.
\]
\end{Lem}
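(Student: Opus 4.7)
My plan is to reduce the stated estimate to a lower bound on the denominator of (\ref{eq:A}). Writing
$$A(\gamma_1,\gamma_2)^2 = \frac{(\gamma_1+\gamma_2+2)\bigl(2(\gamma_1+\gamma_2)+3\bigr)}{g(\gamma_1,\gamma_2)},$$
with
$$g(\gamma_1,\gamma_2) := \B(\gamma_{1}+1,-\gamma_{1}-\gamma_{2}-1)\B(\gamma_{2}+1,-\gamma_{1}-\gamma_{2}-1) + \B(\gamma_{1}+1,-2\gamma_{1}-1)\B(\gamma_{2}+1,-2\gamma_{2}-1),$$
and noting that $\gamma_1+\gamma_2+2 \in (1/2,1)$ on $\Del$ by (\ref{eq:hurst}), the claim is equivalent to producing a constant $c_0>0$ such that $g(\gamma_1,\gamma_2) \ge c_0$ for every $(\gamma_1,\gamma_2) \in \Del$.

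Since both summands in $g$ are non-negative, I would discard the first summand and bound $g$ from below by the second,
$$g(\gamma_1,\gamma_2) \ge \B(\gamma_{1}+1,-2\gamma_{1}-1)\,\B(\gamma_{2}+1,-2\gamma_{2}-1) = h(\gamma_1)\,h(\gamma_2),$$
where $h(\gamma) := \B(\gamma+1,-2\gamma-1)$. The key point is that this reduction removes the coupling constraint $\gamma_1+\gamma_2 > -3/2$ and leaves a one-dimensional problem on $(-1,-1/2)$. Using $\B(x,y)=\Gamma(x)\Gamma(y)/\Gamma(x+y)$, the function $h$ is continuous and strictly positive on this open interval, and it diverges to $+\infty$ at both endpoints: as $\gamma \downarrow -1$ the first argument $\gamma+1$ tends to $0$, while as $\gamma \uparrow -1/2$ the second argument $-2\gamma-1$ tends to $0$, and in either case the pole of $\Gamma$ at $0$ forces $h(\gamma) \to +\infty$. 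Extending $h$ lower-semicontinuously to the compact interval $[-1,-1/2]$ with values in $(0,\infty]$ therefore yields a strictly positive minimum $m_0 > 0$, so $g \ge m_0^2$ on $\Del$, and the lemma follows with $C = 1/m_0$.

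The main point that requires care is recognising this simplification. A direct two-dimensional boundary analysis of the full $g$ would be delicate near the corners of $\Del$, because in the first summand the two Beta factors share the argument $-\gamma_1-\gamma_2-1$ and interact along the diagonal $d$ and at the corner $(-1/2,-1/2)$. Throwing away that summand and working only with the second, whose factors decouple in $\gamma_1$ and $\gamma_2$, sidesteps this issue entirely and reduces everything to the one-variable asymptotics of the Beta function at $0$.
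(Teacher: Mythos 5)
Your argument is correct: reducing the claim to a uniform positive lower bound on the bracketed sum $g$ in (\ref{eq:A}) is exactly the right move, since $\gamma_1+\gamma_2+2\le 1$ on $\Del$, and your lower bound $g\ge h(\gamma_1)h(\gamma_2)$ with $h(\gamma)=\B(\gamma+1,-2\gamma-1)$ bounded below on $(-1,-1/2)$ (by continuity on compact subintervals plus divergence at both endpoints) is sound. The paper gets the same conclusion by a shorter device: the beta function $\B(x,y)$ in (\ref{eq:beta fun}) is \emph{decreasing} in each argument, and on the bounded region $\Del$ every argument appearing in (\ref{eq:A}) is bounded above (indeed $\gamma_i+1<1/2$, $-2\gamma_i-1<1$ and $-\gamma_1-\gamma_2-1<1/2$), so each of the four beta factors is bounded below by an explicit value such as $\B(1/2,1)$ or $\B(1/2,1/2)$. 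This makes your concern about the first summand being ``delicate near the corners'' moot --- monotonicity handles the coupled factor $\B(\gamma_1+1,-\gamma_1-\gamma_2-1)\B(\gamma_2+1,-\gamma_1-\gamma_2-1)$ just as easily as the decoupled one, with no boundary analysis at all --- though discarding it, as you do, is perfectly legitimate since both summands are nonnegative. Your route costs a compactness argument where one line of monotonicity suffices, but it yields an equally valid (and in principle explicit) constant $C$.
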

\begin{proof}
This is immediate by noting that the beta function $\B(x,y)$ defined in (\ref{eq:beta fun}) is decreasing in $x$ and in $y$. 
Since in addition $\Del$ is a bounded region, the beta functions in (\ref{eq:A}) are bounded from below, and hence the factor with negative power $-1/2$ in (\ref{eq:A}) is bounded from above.
\end{proof}

The following  hypercontractivity inequality for multiple Wiener-It\^o integral (see, e.g., Corollary 5.6 of \citet{major:1981:multiple} or Theorem 2.7.2 of \citet{nourdin:peccati:2012:normal}) is useful: 
\begin{Lem}\label{Lem:hypercontract}
For any $m\in \mathbb{Z}_+$, there exists a constant $C_m>0$, such that
\[
\E |I_k(f)|^{2m} \le C_m \left(\E |I_k(f)|^2\right)^m, \quad\text{ for all $f\in L^2(\mathbb{R}^k)$}.
\]
\end{Lem}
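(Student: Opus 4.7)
The plan is to derive the inequality as a consequence of Nelson's hypercontractivity theorem for the Ornstein--Uhlenbeck semigroup $(P_t)_{t\ge 0}$ on $L^2(\Omega,\mathbb{P})$, where $(\Omega,\mathbb{P})$ is the Gaussian probability space generated by the Brownian random measure $B$. The decisive structural fact is that $P_t$ leaves every Wiener chaos invariant and acts on it as a scalar: $P_t I_k(f)=e^{-kt}I_k(f)$ for all $f\in L^2(\mathbb{R}^k)$. This follows from viewing $P_t$ as the second quantization $\Gamma(e^{-t}I)$ of the contraction $e^{-t}I$ on the one-particle Hilbert space, and is precisely what allows a single contraction estimate on the whole of $L^p$ to be converted into a bound depending only on $k$ and $m$.

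First I would invoke Nelson's theorem, which asserts that $\|P_t F\|_q\le \|F\|_p$ whenever $1<p\le q<\infty$ and $e^{2t}\ge (q-1)/(p-1)$. Specializing to $p=2$, $q=2m$ and choosing $t_m=\tfrac{1}{2}\log(2m-1)$ so that the threshold holds with equality, I apply the estimate to $F=I_k(f)$. The scalar action of $P_{t_m}$ on the $k$-th chaos then gives
\[
e^{-kt_m}\|I_k(f)\|_{2m}=\|P_{t_m}I_k(f)\|_{2m}\le \|I_k(f)\|_2,
\]
which rearranges to $\|I_k(f)\|_{2m}\le (2m-1)^{k/2}\|I_k(f)\|_2$. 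Raising both sides to the $2m$-th power yields the stated inequality with the explicit constant $C_m=(2m-1)^{km}$.

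The main obstacle is of course Nelson's theorem itself, which is a deep fact rather than a routine computation. I would establish it by first proving the two-point hypercontractivity estimate on $\{-1,+1\}$ with the natural Markov kernel $Q_\rho f(x)=\tfrac{1+\rho x}{2}f(1)+\tfrac{1-\rho x}{2}f(-1)$; this reduces to a single scalar inequality verifiable by elementary calculus. Tensorizing to the product space $\{-1,+1\}^n$ and passing to the Gaussian case via the central limit theorem, using that $P_t f(x)=\E[f(e^{-t}x+\sqrt{1-e^{-2t}}\,Z)]$ with $Z$ standard normal, then transfers the hypercontractive bound to $L^p(\Omega,\mathbb{P})$; density of the span of Hermite polynomials in $L^p$ completes the extension. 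An equivalent route is to prove Gross's logarithmic Sobolev inequality for the standard Gaussian measure directly and integrate along the semigroup to recover hypercontractivity. Either path is classical; what the lemma requires is only the clean polynomial-in-$m$ constant that drops out of the OU specialization above.
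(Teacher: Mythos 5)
Your argument is correct, and it is in substance the same proof as the one the paper points to: the paper does not prove this lemma itself but cites Corollary 5.6 of Major or Theorem 2.7.2 of Nourdin--Peccati, and the proof in those references is exactly the Ornstein--Uhlenbeck route you describe (Mehler/second-quantization scalar action $P_tI_k(f)=e^{-kt}I_k(f)$ on the $k$-th chaos, combined with Nelson's theorem at $p=2$, $q=2m$, $e^{2t}=2m-1$, yielding $\|I_k(f)\|_{2m}\le(2m-1)^{k/2}\|I_k(f)\|_2$). Your sketch of Nelson's theorem via the two-point inequality, tensorization, and the central limit theorem is the classical Gross argument and is sound. One small remark: your constant $C_m=(2m-1)^{km}$ depends on the chaos order $k$ as well as on $m$, whereas the lemma's notation suggests dependence on $m$ only; this is immaterial for the paper, which applies the lemma only with $k=2$ fixed, but if you want the statement exactly as written you should either fix $k$ or note that the constant is $C_{k,m}$.
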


Tightness of standardized $Z_{\gamma_1,\gamma_2}(t)$ in $C[0,1]$ will follow from the following lemma:
\begin{Lem}\label{Lem:tight}
Let $Z_{\gamma_1,\gamma_2}(t)$ be as in (\ref{eq:gen Rosenblatt proc}) with $A$ as in (\ref{eq:A}) and $(\gamma_1,\gamma_2)$ in the region $\Del$ defined in (\ref{eq:region}). Then there exists a constant $C>0$ which does not depend on $\gamma_1,\gamma_2$, such that
for all $0\le s\le t\le 1$,
\[
\E |Z_{\gamma_1,\gamma_2}(t)-Z_{\gamma_1,\gamma_2}(s)|^4\le C (t-s)^{2},
\]
which implies that the law of  $\{Z_{\gamma_1,\gamma_2}(t): ~(\gamma_1,\gamma_2)\in \Del\}$ is tight in $C[0,1]$. 
\end{Lem}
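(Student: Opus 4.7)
The plan is to apply a moment bound plus Kolmogorov's tightness criterion, exploiting the fact that the second chaos enjoys hypercontractivity with a universal constant, and that the variance has already been normalized by the choice of $A=A(\gamma_1,\gamma_2)$.

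First I would observe that, by linearity of the Wiener-It\^o integral, $Z_{\gamma_1,\gamma_2}(t)-Z_{\gamma_1,\gamma_2}(s)$ is itself a double Wiener-It\^o integral, with kernel given by (\ref{eq:gen Rosenblatt proc}) but with the inner $s$-integral running over $[s,t]$ instead of $[0,t]$. Therefore Lemma \ref{Lem:hypercontract} applies (with $k=2$, $m=2$) and yields a \emph{universal} constant $C_2$ such that
\[
\E\bigl|Z_{\gamma_1,\gamma_2}(t)-Z_{\gamma_1,\gamma_2}(s)\bigr|^{4}\le C_2\Bigl(\E\bigl|Z_{\gamma_1,\gamma_2}(t)-Z_{\gamma_1,\gamma_2}(s)\bigr|^{2}\Bigr)^{2}.
\]
The point is that $C_2$ is independent of $(\gamma_1,\gamma_2)$, which is precisely what is needed for a uniform bound.

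Next I would identify the second moment. Since the generalized Rosenblatt process is self-similar with stationary increments and self-similarity parameter $H=\gamma_1+\gamma_2+2$, and since the choice of $A=A(\gamma_1,\gamma_2)$ in (\ref{eq:A}) is exactly the one that enforces $\Var[Z_{\gamma_1,\gamma_2}(1)]=1$, it follows that
\[
\E\bigl|Z_{\gamma_1,\gamma_2}(t)-Z_{\gamma_1,\gamma_2}(s)\bigr|^{2}=(t-s)^{2H}.
\]
Combining this with the hypercontractivity inequality gives $\E|Z_{\gamma_1,\gamma_2}(t)-Z_{\gamma_1,\gamma_2}(s)|^{4}\le C_2(t-s)^{4H}$. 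Since $(\gamma_1,\gamma_2)\in\Del$ forces $H>1/2$, so that $4H>2$, and since $0\le t-s\le 1$, we deduce $(t-s)^{4H}\le (t-s)^{2}$, yielding the desired inequality with $C=C_2$ independent of $(\gamma_1,\gamma_2)$.

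Finally, tightness in $C[0,1]$ follows from the Kolmogorov--Chentsov criterion: the fourth moment bound $\E|Z_{\gamma_1,\gamma_2}(t)-Z_{\gamma_1,\gamma_2}(s)|^{4}\le C(t-s)^{2}$ is of the form $\E|X(t)-X(s)|^{\alpha}\le C|t-s|^{1+\beta}$ with $\alpha=4$ and $\beta=1$, which gives a uniform modulus-of-continuity control, and therefore tightness of the family. There is essentially no obstacle to speak of here; the only delicate point is verifying that neither the hypercontractivity constant nor the prefactor in the second moment depend on $(\gamma_1,\gamma_2)$, which is guaranteed respectively by the universal nature of hypercontractivity on a fixed chaos and by the choice of $A=A(\gamma_1,\gamma_2)$ normalizing $\Var[Z_{\gamma_1,\gamma_2}(1)]$ to $1$.
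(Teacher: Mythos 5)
Your proposal is correct and follows essentially the same route as the paper: hypercontractivity on the second chaos with a universal constant, identification of the second moment of the increment as $(t-s)^{2H}$ via self-similarity and stationary increments with the normalization $\Var[Z_{\gamma_1,\gamma_2}(1)]=1$, the bound $(t-s)^{4H}\le (t-s)^2$ from $H\ge 1/2$, and then the standard Kolmogorov-type moment criterion for tightness. No substantive differences.
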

\begin{proof}
Using Lemma \ref{Lem:hypercontract}, self-similarity and stationary-increment property of $Z_{\gamma_1,\gamma_2}(t)$, one has
\begin{align*}
\E |Z_{\gamma_1,\gamma_2}(t)-Z_{\gamma_1,\gamma_2}(s)|^4&\le C_2 \left(\E |Z_{\gamma_1,\gamma_2}(t)-Z_{\gamma_1,\gamma_2}(s)|^2\right)^2\\
&= C_2 (t-s)^{4H}\le C_2 (t-s)^{2},
\end{align*}
where $H:=\gamma_1+\gamma_2+2\ge 1/2$ and $0\le t-s \le 1$. So $Z_{\gamma_1,\gamma_2}(t)$ by Kolmogorov's criterion admits a continuous version. Tightness  follows from, e.g., \citet{prokhorov:1956:convergence} Lemma 2.2.
\end{proof}

We now prove Theorem  \ref{Thm:main diag}. By Lemma \ref{Lem:tight},  tightness in $C[0,1]$ holds. We are left to show convergence of finite-dimensional distributions ($\ConvFDD$).  From here on, we let $C$ and $c$ denote constants whose values can change from line to line.
\begin{proof}[Proof  of $\ConvFDD$  Theorem \ref{Thm:main diag}]
Due to self-similarity and stationary increments, the covariance  of the standardized $Z_{\gamma_1,\gamma_2}(t)$ is 
\[
\E Z_{\gamma_1,\gamma_2}(s)Z_{\gamma_1,\gamma_2}(t)=\frac{1}{2}\left(s^{2\gamma_1+2\gamma_2+4}+t^{2\gamma_1+2\gamma_2+4}-|s-t|^{2\gamma_1+2\gamma_2+4}\right), \quad t,s\ge 0,
\] 
which converges to the Brownian motion covariance $\E B(s)B(t)=s\wedge t=\frac{1}{2}(s+t-|s-t|)$ as  $\gamma_1+\gamma_2\rightarrow -3/2$. By using the method of moments,  it is sufficient to show that
\begin{equation}\label{eq:cum conv zero}
\kappa_m\left(\sum_{i=1}^n c_i Z_{\gamma_1,\gamma_2}(t_i) \right)\rightarrow 0, \quad \text{ m}\ge 3.
\end{equation}

As $\gamma_1+\gamma_2\rightarrow -3/2$, the factor $A(\gamma_1,\gamma_2)$ in (\ref{eq:A}) converges to zero  by Lemma \ref{Lem:bound A}. It is therefore sufficient to show that for $m\ge  3$, and $\gamma_1,\gamma_2>-1+\epsilon$, the factor $C_m(\gamma_1,\gamma_2;\mathbf{t},\mathbf{c})$ in (\ref{eq:C(t1,...,tk)}) is bounded.

Under the constraints  $\gamma_1+\gamma_2\ge -3/2$ and $\gamma_1,\gamma_2>-1+\epsilon$ (or equivalently  $\gamma_1,\gamma_2<-1/2-\epsilon$), the factors  $\B(\gamma_{\sigma_{j-1}'}+1,-\gamma_{\sigma_j}-\gamma_{\sigma_{j-1}'}-1)$ and $\B(\gamma_{\sigma_j}+1,-\gamma_{\sigma_j}-\gamma_{\sigma_{j-1}'}-1)$   are bounded by a constant $C>0$ for any $\sigma$ and $j$. This is because the beta function $\B(x,y)$ defined in (\ref{eq:beta fun}) is bounded if both $x$ and $y$ stay away from a neighborhood of $0$.
 Choosing $T\ge \max (t_1,\ldots,t_n)$, one then has
\begin{align*}
|C_m(\gamma_1,\gamma_2;\mathbf{t},\mathbf{c})| \le&   C  \sum_{\sigma \in \{1,2\}^m}\int_{[0,T]^m} d\mathbf{s} \prod_{j=1}^{m} |s_j-s_{j-1}|^{\gamma_{\sigma_j}+\gamma_{\sigma_{j-1}'}+1}  \\ 
\le&  　C \sum_{\sigma \in \{1,2\}^m} \int_{[0,1]^m} d\mathbf{s} \prod_{j=1}^{m} |s_j-s_{j-1}|^{\gamma_{\sigma_j}+\gamma_{\sigma_{j-1}'}+1},
\end{align*}
where the last constant $C$ depends on $T$, $m$ and $\epsilon$.

We now want to apply  Corollary \ref{Cor:bound circular} to establish the boundedness of each of the term in the preceding sum.    Using the notation in Lemma \ref{Lem:bound circular}, we set 
\[
\alpha_j=\gamma_{\sigma_j}+\gamma_{\sigma_{j-1}'}+1.
\] 
Recall that $\gamma_{\sigma_j}$ and $\gamma_{\sigma_{j-1}'}$ are either $\gamma_1$ or $\gamma_2$ and $\gamma_{\sigma_j}+\gamma_{\sigma_j'}=\gamma_1+\gamma_2$. Now
since  $\gamma_1+\gamma_2\ge -3/2$ and $\gamma_j\ge -1+\epsilon$,  we have
\begin{align*}
\alpha_j\ge   
\begin{cases} 
2\gamma_j+1\ge -1+2\epsilon, &\text{ if } \sigma_{j-1}'=\sigma_{j};\\
\gamma_1+\gamma_2+1\ge -3/2+1=-1/2, &\text{ if } \sigma_{j-1}'\neq \sigma_{j};
\end{cases}
\end{align*}
We get $\alpha_j\ge  C_1:=-1+2\epsilon>-1$.

On the other hand, when $m\ge 3$,
\begin{align*}
\sum_{i=1}^m \alpha_i+m=m(\gamma_1+\gamma_2)+2m\ge m(-3/2) +2m =\frac{m}{2}\ge C_2:=\frac{3}{2} >1.
\end{align*}
So Corollary \ref{Cor:bound circular} can be applied to deduce the boundedness of  $|C_m(\gamma_1,\gamma_2;\mathbf{t},\mathbf{c})|$ when $\gamma_1,\gamma_2\ge -1+\epsilon$, and the proof is thus concluded.
\end{proof}
\begin{Rem}
Theorem \ref{Thm:main diag} involves convergence to a Gaussian process. In this case, 
according to the results of \citet{nualart:peccati:2005:central} and \citet{peccati:tudor:2005:gaussian}, it suffices to show that (\ref{eq:cum conv zero}) holds for $m=4$ and $n=1$. 
Focusing on the fourth cumulant, the covariance structure, and the one-dimensional distribution, however, does not simplify significantly the proof as can be seen by examining the  proof of Theorem \ref{Thm:main diag}. 
\end{Rem}

\subsection{Proof of Theorem \ref{Thm:main edge}}
\begin{Lem}\label{Lem:fractional cov}
Suppose that $\alpha>-1$, then for any $t_1,t_2\in \mathbb{R}$,
\[
\int_0^{t_1}\int_0^{t_2}|x_1-x_2|^{\alpha} dx_1dx_2=\frac{1}{(\alpha+1)(\alpha+2)} \left( |t_1|^{\alpha+2}+ |t_2|^{\alpha+2}-|t_1-t_2|^{\alpha+2} \right).
\]
\end{Lem}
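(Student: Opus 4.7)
The plan is to prove this by direct computation after reducing to the simplest case. Observe first that if $t_1$ or $t_2$ is negative, the identity can be reduced to the case of positive endpoints by the substitution $x_i \mapsto -x_i$ (which flips the limits of integration and leaves $|x_1-x_2|^\alpha$ invariant up to a sign that is absorbed in $|t_i|^{\alpha+2}$). So I may assume $t_1, t_2 > 0$, and by symmetry of the integrand in $(x_1,x_2)$ I may further assume $0 < t_1 \le t_2$.

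Next I would split the inner integral over $x_2 \in [0,t_2]$ at the point $x_2 = x_1$ to remove the absolute value:
\[
\int_0^{t_2} |x_1-x_2|^\alpha\, dx_2 = \int_0^{x_1}(x_1-x_2)^\alpha\, dx_2 + \int_{x_1}^{t_2}(x_2-x_1)^\alpha\, dx_2 = \frac{x_1^{\alpha+1} + (t_2-x_1)^{\alpha+1}}{\alpha+1}.
\]
Here the condition $\alpha > -1$ is exactly what guarantees integrability at $x_2 = x_1$. Integrating once more in $x_1$ from $0$ to $t_1$ (using the substitution $u = t_2 - x_1$ for the second term) gives
\[
\frac{1}{(\alpha+1)(\alpha+2)}\Big(t_1^{\alpha+2} + t_2^{\alpha+2} - (t_2-t_1)^{\alpha+2}\Big),
\]
which is the claimed right-hand side since under our assumption $t_2 - t_1 = |t_1 - t_2|$ and $t_i = |t_i|$.

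There is no real obstacle here: the whole argument is two elementary antiderivatives, with the hypothesis $\alpha > -1$ serving only to make the singular inner integral converge. The one place to be a little careful is the sign bookkeeping when extending to arbitrary real $t_1,t_2$, but this is handled uniformly by the substitution described in the first paragraph.
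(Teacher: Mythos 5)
Your proof is correct and follows essentially the same elementary route as the paper: reduce to $0<t_1\le t_2$ and compute directly, the only cosmetic difference being that you split the inner integral at $x_2=x_1$ while the paper splits the domain into $[0,t_1]^2$ and $[0,t_1]\times[t_1,t_2]$. (Your sign-flip remark only literally covers the case where both $t_i$ are negative, not the mixed-sign case, but the paper is equally terse about the "other cases," and only nonnegative $t_i$ are ever used.)
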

\begin{proof}
Suppose $0<t_1\le t_2$. The other cases  are similar. Then
\begin{align*}
\int_0^{t_1}\int_0^{t_2}|x_1-x_2|^{\alpha} dx_1dx_2
&=\int_0^{t_1} \int_0^{t_1} |x_1-x_2|^{\alpha} dx_1dx_2+ \int_0^{t_1}\int_{t_1}^{t_2} (x_2-x_1)^\alpha dx_2 dx_1\\
&=\frac{2}{(\alpha+1)(\alpha+2)}t_1^{\alpha+2}+\frac{1}{(\alpha+1)(\alpha+2)}[t_2^{\alpha+2}-t_1^{\alpha+2}-(t_2-t_1)^{\alpha+2}]\\
&=\frac{1}{(\alpha+1)(\alpha+2)}\left[ t_1^{\alpha+2}+ t_2^{\alpha+2}-(t_2-t_1)^{\alpha+2} \right].
\end{align*}
\end{proof}

Below the notation $A\sim B$ means asymptotic equivalence, namely, the ratio $A/B$ converges to $1$.
We include first a fact about the asymptotics of the beta function $\B(\cdot,\cdot)$ when one of the exponents approaches the boundary. 
\begin{Lem}\label{Lem:beta asymp}
 Let $0<b_0<b_1<\infty$. Then as $\alpha\rightarrow 0$, we have
\[
\alpha\B(\alpha,\beta)\rightarrow 1
\]
uniformly in $\beta \in [b_0,b_1]$.
Since the beta functions is symmetric, we also have $\alpha\B(\beta,\alpha)\rightarrow 1$ as $\alpha\rightarrow 0$ uniformly in $\beta \in [b_0,b_1]$.
\end{Lem}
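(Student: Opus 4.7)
The plan is to reduce the claim to standard continuity properties of the Gamma function via the identity
\begin{equation*}
\B(\alpha,\beta) \;=\; \frac{\Gamma(\alpha)\Gamma(\beta)}{\Gamma(\alpha+\beta)},
\end{equation*}
which gives
\begin{equation*}
\alpha\,\B(\alpha,\beta) \;=\; \frac{\alpha\,\Gamma(\alpha)\,\Gamma(\beta)}{\Gamma(\alpha+\beta)} \;=\; \frac{\Gamma(\alpha+1)\,\Gamma(\beta)}{\Gamma(\alpha+\beta)}.
\end{equation*}
Since $\Gamma(1)=1$, the numerator $\Gamma(\alpha+1)$ tends to $1$ as $\alpha\to 0$, and this is independent of $\beta$. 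So everything reduces to showing that $\Gamma(\beta)/\Gamma(\alpha+\beta)\to 1$ uniformly for $\beta\in[b_0,b_1]$.

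To get this uniformity, I would fix some $\delta\in(0,b_0/2)$ and restrict attention to $|\alpha|\le\delta$, so that $\alpha+\beta$ ranges in the compact interval $[b_0-\delta,\,b_1+\delta]\subset(0,\infty)$. On that interval $\Gamma$ is uniformly continuous and bounded away from $0$ and $\infty$, so $\sup_{\beta\in[b_0,b_1]}|\Gamma(\alpha+\beta)-\Gamma(\beta)|\to 0$ as $\alpha\to 0$, and dividing by the uniformly positive quantity $\Gamma(\alpha+\beta)$ preserves this. Combining with $\Gamma(\alpha+1)\to 1$ yields $\alpha\B(\alpha,\beta)\to 1$ uniformly in $\beta\in[b_0,b_1]$. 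The symmetric statement $\alpha\B(\beta,\alpha)\to 1$ is immediate from $\B(\alpha,\beta)=\B(\beta,\alpha)$, visible either from the Gamma function formula or directly from the integral definition~(\ref{eq:beta fun}) by the change of variable $u\mapsto 1-u$. There is really no substantive obstacle here; the only care needed is to choose the compact interval containing $\alpha+\beta$ small enough so that $\Gamma$'s uniform continuity can be invoked.
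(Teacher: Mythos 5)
Your proof is correct, but it takes a different route from the paper. You pass through the Gamma-function representation $\B(\alpha,\beta)=\Gamma(\alpha)\Gamma(\beta)/\Gamma(\alpha+\beta)$ and the functional equation $\alpha\Gamma(\alpha)=\Gamma(\alpha+1)$, reducing the claim to $\Gamma(\alpha+1)\to 1$ and to the uniform continuity and positivity of $\Gamma$ on a compact subinterval of $(0,\infty)$ containing $\alpha+\beta$; the uniformity in $\beta$ then comes for free. The paper instead argues directly from the integral definition (\ref{eq:beta fun}): it splits $\B(\alpha,\beta)=\int_0^\epsilon+\int_\epsilon^1$, shows the first piece is squeezed between $\alpha^{-1}\epsilon^\alpha(1-\epsilon)^{b_1-1}$ and $\alpha^{-1}\epsilon^\alpha(1-\epsilon)^{b_0-1}$ while the second is bounded uniformly, and then lets $\epsilon\to 0$ after taking $\liminf$ and $\limsup$. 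Your argument is shorter and arguably cleaner, at the cost of invoking standard facts about $\Gamma$ rather than staying self-contained at the level of the Beta integral; the paper's $\epsilon$-splitting is more elementary but requires the two-sided squeeze bookkeeping. One cosmetic point: since $\B(\alpha,\beta)$ as defined by the integral requires $\alpha>0$, the limit $\alpha\to 0$ is really $\alpha\to 0^+$, so you should restrict to $0<\alpha\le\delta$ rather than $|\alpha|\le\delta$; this does not affect the substance of your argument.
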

\begin{proof}
Assume without loss of generality that $b_0\le 1 \le b_1$.
Fix any small $\epsilon>0$. Then 
\begin{equation}\label{eq:B=I_1+I_2}
 \B(\alpha,\beta)=\int_0^\epsilon x^{\alpha-1}(1-x)^{\beta-1} dx+\int_{\epsilon}^1 x^{\alpha-1}(1-x)^{\beta-1} dx=: I_1(\alpha,\beta;\epsilon)+ I_2(\alpha,\beta;\epsilon).
\end{equation}
For $I_1(\alpha,\beta;\epsilon)$, we have 
\begin{align*}
\alpha^{-1} \epsilon^{\alpha}(1-\epsilon)^{b_1-1}= \int_0^\epsilon x^{\alpha-1} dx (1-\epsilon)^{b_1-1}
\le I_1(\alpha,\beta;\epsilon)\le \int_0^\epsilon x^{\alpha-1} dx  (1-\epsilon)^{b_0-1}= \alpha^{-1} \epsilon^{\alpha} (1-\epsilon)^{b_0-1}.
\end{align*}
This yields that
\begin{align}\label{eq:I_1 est}
(1-\epsilon)^{b_1-1}\le \liminf_{\alpha\rightarrow0, \beta\in [b_0,b_1]} \alpha I_1(\alpha,\beta,\epsilon)\le \limsup_{\alpha\rightarrow0, \beta\in [b_0,b_1]} \alpha I_1(\alpha,\beta,\epsilon) \le    (1-\epsilon)^{b_0-1}.
\end{align}
For $I_2(\alpha,\beta;\epsilon)$, it is uniformly bounded with respect to $\alpha\le 1$ and $\beta$ as follows: 
\begin{equation}\label{eq:I_2 est}
I_2(\alpha,\beta;\epsilon)\le \epsilon^{\alpha-1} \int_\epsilon^1 (1-x)^{\beta-1}dx =\epsilon^{\alpha-1}  \beta^{-1} (1-\epsilon)^\beta \le \epsilon^{-1} b_0^{-1} (1-\epsilon)^{b_0}.
\end{equation}
Combining (\ref{eq:B=I_1+I_2}), (\ref{eq:I_1 est}) and (\ref{eq:I_2 est}), we get
\[
(1-\epsilon)^{b_1-1}\le \liminf_{\alpha\rightarrow0, \beta\in [b_0,b_1]} \alpha \B(\alpha,\beta)\le \limsup_{\alpha\rightarrow0, \beta\in [b_0,b_1]} \alpha\B(\alpha,\beta) \le    (1-\epsilon)^{b_0-1}.
\]
Since $\epsilon$ is arbitrary, we get that $\alpha \B(\alpha,\beta)\rightarrow 1$ as $\alpha\rightarrow 0$.
\end{proof}
The limit $\alpha \B(\alpha,\beta)\rightarrow 1$ as $\alpha\rightarrow 0$ will be used extensively, mostly in the form 
\[\B(\alpha,\beta)\sim \alpha^{-1}\rightarrow \infty.\]

\begin{Lem}\label{Lem:Z_gamma limit cumulant}
Let $W B_{\gamma+3/2}(t)$ be the process given as  Theorem \ref{Thm:main edge}. We also include the case $\gamma=-1$ where $B_{\gamma+3/2}(t)=B_{1/2}(t)$ is Brownian motion. Then the $m$-th cumulant of the linear combination of $ WB_{\gamma+3/2}(t)$ at different time points is given by
\begin{align}\label{eq:limit cumulant}
\kappa_m\left(\sum_{i=1}^n c_i W B_{\gamma+3/2}(t_i) \right)
=
 (m-1)! \left[\sum_{i_1,i_2=1}^n \frac{c_{i_1}c_{i_2}}{2} \left(|t_{i_1}|^{2\gamma+3}+|t_{i_2}|^{2\gamma+3}- |t_{i_1}-t_{i_2}|^{2\gamma+3} \right) \right]^{m/2} 
\end{align}
if $m$ is even, and $0$ if $m$ is odd.
\end{Lem}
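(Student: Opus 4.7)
The plan is to reduce the cumulants of the product process $W B_{\gamma+3/2}(t)$ to a one-variable computation by conditioning on $W$, exploiting the fact that, conditional on $W$, the sum $\sum_{i=1}^n c_i W B_{\gamma+3/2}(t_i)$ is a scalar multiple of a single centered Gaussian.

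\textbf{Step 1: Factor out $W$ and identify a scalar Gaussian.} Since $W$ is independent of $B_{\gamma+3/2}$, the linear combination factors as
\[
\sum_{i=1}^n c_i W B_{\gamma+3/2}(t_i) \;=\; W\cdot Y,\qquad Y:=\sum_{i=1}^n c_i B_{\gamma+3/2}(t_i).
\]
Because $Y$ is a linear combination of jointly Gaussian random variables, $Y\sim N(0,\sigma^2)$. Using the fBm covariance (\ref{eq:fbm cov}) with $H=\gamma+3/2$ (which for $\gamma=-1$ reduces to the Brownian motion covariance $s\wedge t$, consistent with Lemma \ref{Lem:fractional cov}), I compute
\[
\sigma^2 \;=\; \sum_{i_1,i_2=1}^n \frac{c_{i_1}c_{i_2}}{2}\bigl(|t_{i_1}|^{2\gamma+3}+|t_{i_2}|^{2\gamma+3}-|t_{i_1}-t_{i_2}|^{2\gamma+3}\bigr).
\]
Writing $Y=\sigma Z$ for $Z\sim N(0,1)$ independent of $W$, it suffices to compute $\kappa_m(\sigma W Z)=\sigma^m\kappa_m(WZ)$, so I am reduced to the universal constants $\kappa_m(WZ)$ for $W,Z$ i.i.d.\ standard normals.

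\textbf{Step 2: Represent $WZ$ as a rescaled difference of independent $\chi^2_1$'s.} Setting $U=(W+Z)/\sqrt{2}$ and $V=(W-Z)/\sqrt{2}$, the pair $(U,V)$ is again i.i.d.\ standard normal, and the polarization identity gives
\[
WZ \;=\; \tfrac{1}{2}(U^2-V^2).
\]
Since $U^2$ and $V^2$ are independent $\chi^2_1$ variables with cumulants $\kappa_m(\chi^2_1)=2^{m-1}(m-1)!$, the additivity of cumulants under independent summation yields
\[
\kappa_m(WZ)=\left(\tfrac{1}{2}\right)^m\bigl[\kappa_m(U^2)+(-1)^m\kappa_m(V^2)\bigr]=\left(\tfrac{1}{2}\right)^m 2^{m-1}(m-1)!\bigl[1+(-1)^m\bigr],
\]
which equals $(m-1)!$ for $m$ even and $0$ for $m$ odd. (As a sanity check: $\kappa_2=\E(WZ)^2=1$ and $\kappa_4=\E(WZ)^4-3(\E(WZ)^2)^2=9-3=6=3!$.)

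\textbf{Step 3: Assemble.} Combining Steps 1 and 2,
\[
\kappa_m\!\left(\sum_{i=1}^n c_i W B_{\gamma+3/2}(t_i)\right)=\sigma^m\kappa_m(WZ)=(m-1)!\,(\sigma^2)^{m/2}
\]
for even $m$, and $0$ for odd $m$, which is exactly (\ref{eq:limit cumulant}). There is essentially no obstacle here; the only thing to keep in mind is the boundary case $\gamma=-1$, where the fBm covariance formula continues to apply and simply specializes to that of Brownian motion.
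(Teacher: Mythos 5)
Your proposal is correct and follows essentially the same route as the paper's proof: factor the linear combination as $\sigma W Z$ with $Z$ standard normal independent of $W$, compute $\sigma^2$ from the fractional Brownian motion covariance, and then use the polarization identity $WZ=\tfrac12(U^2-V^2)$ with independent $\chi_1^2$ variables together with additivity of cumulants. No meaningful differences to report.
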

\begin{proof}
\begin{align*}
\sum_{i=1}^n c_i  WB_{\gamma+3/2}(t_i)=W\sum_{i=1}^nc_iB_{\gamma+3/2}(t_i)=\sigma W Z, 
\end{align*}
where $Z$ is a standard normal random variable which is independent of $W$, and 
\begin{align*}
\sigma=&\left(\Var\left[\sum_{i=1}^n c_i B_{\gamma+3/2}(t_i) \right]\right)^{1/2}=\left[\E  \sum_{i_1,i_2=1}^n c_{i_1}c_{i_2} B_{\gamma+3/2}(t_{i_1}) B_{\gamma+3/2}(t_{i_2}) \right]^{1/2}\\
=&\left[\sum_{i_1,i_2=1}^n \frac{c_{i_1}c_{i_2}}{2} \left(|t_{i_1}|^{2\gamma+3}+|t_{i_2}|^{2\gamma+3}- |t_{i_1}-t_{i_2}|^{2\gamma+3} \right) \right]^{1/2},
\end{align*}
using the covariance of fractional Brownian motion. 
Then note that 
\begin{equation}\label{eq:independent normal}
 W Z= \frac{1}{2}\left[ \left(\frac{W+ Z}{\sqrt{2}}\right)^2- \left(\frac{W- Z}{\sqrt{2}}\right)^2\right],
\end{equation}
where $Z_1^2:=\left[\frac{W+ Z}{\sqrt{2}}\right]^2$ and $Z_2^2:=\left[\frac{W- Z}{\sqrt{2}}\right]^2$ are two independent $\chi_1^2$ (chi-squared random variables with one degree of freedom). The independence is due to the fact that $Z+W$ and $Z-W$ are uncorrelated. Since the $m$-th cumulant of a $\chi_1^2$ variable is $2^{m-1}(m-1)!$, and using the scaling property and the additive property of cumulant under independence, we have
\begin{align*}
\kappa_m\left(\sigma W  Z \right)&=\left(\frac{\sigma}{2}\right)^m [\kappa_m(Z_1^2)+(-1)^m\kappa_m(Z_2^2)]\\
&=\left(\frac{\sigma}{2}\right)^m [2^{m-1}(m-1)!+(-1)^m2^{m-1}(m-1)!],
\end{align*}
which is equal to $0$ if $m$ is odd, and equal to $\sigma^m (m-1)!$ if $m$ is even, proving (\ref{eq:limit cumulant}).
\end{proof}
\begin{Rem}\label{Rem:analytic}
Starting with the $\chi_1^2$ characteristic function $\phi(t)=(1-2it)^{-1/2}$, it is easy to derive using (\ref{eq:independent normal}) that the characteristic function of the  standard   product-normal distribution $W Z$ is $\varphi(t)=(1+t^2)^{-1/2}$.
\end{Rem}

In view of Lemma \ref{Lem:tight},
we are left to prove the convergence of the finite-dimensional distributions ($\ConvFDD$) in  Theorem \ref{Thm:main edge}. 

\begin{proof}[Proof of $\ConvFDD$ in Theorem \ref{Thm:main edge}]
By the Cram\'er-Wold device, we need to show   as $\gamma_1\rightarrow -1/2$ and $\gamma_2\rightarrow \gamma \in (-1/2,-1)$ that
\[
\sum_{i=1}^n c_i Z_{\gamma_1,\gamma_2}(t_i) \ConvD \sum_{i=1}^n c_i W  B_{\gamma+3/2}(t_i) .
\]
Since $\sum_{i=1}^n c_i W  B_{\gamma+3/2}(t_i)$ has an analytic characteristic function (Remark \ref{Rem:analytic}), its distribution is moment-determinate. And hence we can apply a method of moments here. In fact, by Theorem 3.4 of \citet{nourdin:poly:2012:convergence}, only a finite number of moments are required to prove convergence in distribution.

 The cumulant formula of $\sum_{i=1}^n c_i Z_{\gamma_1,\gamma_2}(t_i)$ is given in  Proposition \ref{Pro:joint cum}, which involves the factors $A(\gamma_1,\gamma_2)$ in (\ref{eq:A}) (recall that $Z_{\gamma_1,\gamma_2}$ is standardized) and $C_m(\gamma_1,\gamma_2;\mathbf{t},\mathbf{c})$ in (\ref{eq:C(t1,...,tk)}). Assume $m\ge 2$ below.

Examining $A(\gamma_1,\gamma_2)$,  by Lemma \ref{Lem:beta asymp}, one can see that as  $\gamma_1\rightarrow -1/2$ and $\gamma_2\rightarrow \gamma$,  
\begin{align*}
A(\gamma_1,\gamma_2)^m &\sim \left[(\gamma+3/2)(2\gamma+2)\right]^{m/2}\big[\B(1/2,-\gamma-1/2)\B(\gamma+1,-\gamma-1/2)\notag\\
&\qquad  \qquad  \qquad  \qquad  \qquad  ~ + \B(1/2,-2\gamma_1-1)\B(\gamma+1,-2\gamma-1) \big]^{-m/2}.
\end{align*}
The first two  and the fourth beta functions are bounded but the third blows up since 
\[\B(1/2,-2\gamma_1-1)\sim (-2\gamma_1-1)^{-1}
\] as $\gamma_1\rightarrow -1/2$ by Lemma \ref{Lem:beta asymp}. Hence as $\gamma_1\rightarrow -1/2$, 
\begin{align}\label{eq:A(gamma)^m}
A(\gamma_1,\gamma_2)^m& \sim \left[(\gamma+3/2)(2\gamma+2)\right]^{m/2}\big[\B(1/2,-2\gamma_1-1)\B(\gamma+1,-2\gamma-1) \big]^{-m/2}\notag
\\& \sim (-2\gamma_1-1)^{m/2} (2\gamma+3)^{m/2}(\gamma+1)^{m/2} \B(\gamma+1,-2\gamma-1)^{-m/2}, 
\end{align}
which converges to zero. 
%So
%\begin{equation}\label{eq:A conv order}
%\text{ $A(\gamma_1,\gamma_2)^m$ converges to $0$ following the order $(-2\gamma_1-1)^{m/2}$。 }
%\end{equation}
%

On the other hand, in the expression of $C_m(\gamma_1,\gamma_2;\mathbf{t},\mathbf{c})$ in (\ref{eq:C(t1,...,tk)}), the only factors diverging to $\infty$ as $\gamma_1\rightarrow -1/2$ and $\gamma_2\rightarrow \gamma$ are $\B(\gamma_{\sigma_{j-1}'}+1,-\gamma_{\sigma_j}-\gamma_{\sigma_{j-1}'}-1)$ and $\B(\gamma_{\sigma_j}+1,-\gamma_{\sigma_j}-\gamma_{\sigma_{j-1}'}-1)$ and only when $\sigma_j=\sigma_{j-1}'=1$, because $-\gamma_{\sigma_j}-\gamma_{\sigma_{j-1}'}-1=-2\gamma_1-1\rightarrow 0$ and hence the beta functions each diverge like $(-2\gamma_1-1)^{-1}$ by Lemma \ref{Lem:beta asymp}. To get the highest order of divergence to $\infty$, one chooses $\sigma\in \{1,2\}^m$ such that $\sigma_j=\sigma_{j-1}'=1$ happens as many times as possible.

\medskip

In the case $m$ is odd,
\[
\underset{\sigma\in \{1,2\}^m}{\mathrm{max}} ~\# \{j: \sigma_j=\sigma_{j-1}'=1, j=1,\ldots,m\}=(m-1)/2,
\]
because if $\sigma_j=\sigma_{j-1}'=1$, then $\sigma_j'=2$, and we therefore cannot have $\sigma_{j+1}=\sigma_{j}'=1$.
So 
\begin{equation}\label{eq:C_m odd div order}
C_m(\gamma_1,\gamma_2;\mathbf{t},\mathbf{c})\sim c B(1/2,-2\gamma_1-1)^{(m-1)/2}\sim c (-2\gamma_1-1)^{-(m-1)/2},
\end{equation}
which diverges to $\infty$
as $\gamma_1\rightarrow -1/2$. By (\ref{eq:A(gamma)^m}) and (\ref{eq:C_m odd div order}),  when $m$ is odd,  
\begin{equation}\label{eq:cum odd conv}
\kappa_m\left(\sum_{i=1}^n c_i Z_{\gamma_1,\gamma_2}(t_i) \right)=\frac{1}{2}(m-1)!A(\gamma_1,\gamma_2)^m  C_m(\gamma_1,\gamma_2;\mathbf{t},\mathbf{c})\sim c (-2\gamma_1-1)^{1/2}\rightarrow 0.
\end{equation}

When $m$ is even, the sequences $\sigma$ for which one has the greatest number of $j$'s such that $\sigma_j=\sigma_{j-1}'=1$ is  
\begin{align}\label{eq:1,2,1,2}
\underset{\sigma\in \{1,2\}^m}{\mathrm{argmax}} ~\# \{j: \sigma_j=\sigma_{j-1}'=1, j=1,\ldots,m\}=
(1,2,1,2,\ldots,1,2) \text{ or } (2,1,2,1,\ldots,2,1),
\end{align}
and one gets maximally $m/2$ number of $j$'s where $\sigma_j=\sigma_{j-1}'=1$. The product of the $m/2$ contributing beta factors diverge like $(-2\gamma_1-1)^{m/2}$. But since the case $m$ even will yield a nonzero limit, we need to keep track of the multiplicative constants.  Because $\sigma=(1,2,1,2\ldots,1,2)$ and $\sigma=(2,1,2,1,\ldots,2,1)$  yield the same term, one has as $\gamma_1\rightarrow -1/2$ and $\gamma_2\rightarrow \gamma$ that 
\begin{align}
C_m(\gamma_1,\gamma_2;\mathbf{t},\mathbf{c})\sim & 2(-2\gamma_1-1)^{-m/2}  \Bigg[ \sum_{i_1,\ldots,i_n=1}^n c_{i_1}\ldots c_{i_m} \B(\gamma+1,-2\gamma-1)^{m/2}
 \notag
\\&\times \int_0^{t_{i_1}} \ldots\int_0^{t_{i_m}} |s_1-s_2|^{2\gamma+1}|s_3-s_4|^{2\gamma+1} \ldots |s_{m-1}-s_m|^{2\gamma+1}   d\mathbf{s}\Bigg]\notag\\
=&2(-2\gamma_1-1)^{-m/2} (2\gamma+3)^{-m/2} (\gamma+1)^{-m/2}  B(\gamma+1,-2\gamma-1)^{m/2}\notag\\& \times \left[\sum_{i_1,i_2=1}^n \frac{c_{i_1}c_{i_2}}{2} \left(|t_{i_1}|^{2\gamma+3}+|t_{i_2}|^{2\gamma+3}- |t_{i_1}-t_{i_2}|^{2\gamma+3} \right) \right]^{m/2},\label{eq:C asymp}
\end{align}
where the asymptotic equivalence $\sim$ in the first line can be justified by the Dominated Convergence Theorem, and the last equality is due to Lemma \ref{Lem:fractional cov}. 
 
Combining (\ref{eq:joint cum gen Ronsen}), (\ref{eq:A(gamma)^m}) and (\ref{eq:C asymp}), one gets  as $\gamma_1\rightarrow -1/2$ and $\gamma_2\rightarrow \gamma$ that for $m$ even,
\begin{align}\label{eq:cum even conv}
\kappa_m\left(\sum_{i=1}^n c_i Z_{\gamma_1,\gamma_2}(t_i)\right)\rightarrow
 (m-1)!  \left[\sum_{i_1,i_2=1}^n \frac{c_{i_1}c_{i_2}}{2} \left(|t_{i_1}|^{2\gamma+3}+|t_{i_2}|^{2\gamma+3}- |t_{i_1}-t_{i_2}|^{2\gamma+3} \right) \right]^{m/2}.
\end{align}
The proof is concluded by comparing (\ref{eq:cum odd conv}) and (\ref{eq:cum even conv}) with Lemma \ref{Lem:Z_gamma limit cumulant}.
\end{proof}

We state a byproduct of the preceding proof which will be used in Section \ref{sec:add 2}.
\begin{Cor}\label{Cor:kappa}
Under the condition and the notation of Theorem \ref{Thm:main edge},  when $m\ge 4$ is even, we have
\[
\kappa_m\left(Z_{\gamma_1,\gamma_2}(1)\right)= (m-1)! + O\left(-\gamma_1-1/2\right).
\]
\end{Cor}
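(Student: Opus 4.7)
The plan is to refine the asymptotic analysis already performed in the proof of Theorem \ref{Thm:main edge} for the special case $n=1$, $c_1=t_1=1$, tracking not only the limiting value $(m-1)!$ but also the rate of the remainder. By Proposition \ref{Pro:joint cum}, one has
\[
\kappa_m(Z_{\gamma_1,\gamma_2}(1)) = \tfrac{1}{2}(m-1)! \, A(\gamma_1,\gamma_2)^m \, C_m(\gamma_1,\gamma_2;1,1),
\]
and the strategy is to split the sum over $\sigma\in\{1,2\}^m$ in $C_m$ into a \emph{leading} piece coming from the two alternating sequences $\sigma=(1,2,1,2,\ldots,1,2)$ and $\sigma=(2,1,2,1,\ldots,2,1)$ (which, by \eqref{eq:1,2,1,2}, are the only ones with $m/2$ indices satisfying $\sigma_j=\sigma'_{j-1}=1$) and a \emph{subleading} remainder $C_m^{\mathrm{rest}}$.

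First I would estimate $A(\gamma_1,\gamma_2)^m C_m^{\mathrm{rest}}$. Every $\sigma$ contributing to $C_m^{\mathrm{rest}}$ has at most $m/2-1$ indices with $\sigma_j=\sigma'_{j-1}=1$, so the associated product of beta functions is $O\!\left((-2\gamma_1-1)^{-(m/2-1)}\right)$ (the remaining beta factors are bounded by Lemma \ref{Lem:beta asymp} since their second argument stays away from $0$), while the circular integral over $[0,1]^m$ is uniformly bounded by Corollary \ref{Cor:bound circular}. Combined with $A(\gamma_1,\gamma_2)^m \asymp (-2\gamma_1-1)^{m/2}$ from \eqref{eq:A(gamma)^m}, this yields $A^m C_m^{\mathrm{rest}} = O(-\gamma_1-1/2)$.

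Next I would treat the leading piece. For each $j$ with $\sigma_j=\sigma'_{j-1}=i$, the bracket in \eqref{eq:C(t1,...,tk)} collapses to $|s_j-s_{j-1}|^{2\gamma_i+1}\B(\gamma_i+1,-2\gamma_i-1)$, so that
\[
C_m^{\mathrm{lead}} = 2\,\B(\gamma_1+1,-2\gamma_1-1)^{m/2}\,\B(\gamma_2+1,-2\gamma_2-1)^{m/2}\, J(\gamma_1,\gamma_2),
\]
where $J$ is the cyclic integral of $\prod_{j \mathrm{\ odd}}|s_j-s_{j-1}|^{2\gamma_1+1}\prod_{j \mathrm{\ even}}|s_j-s_{j-1}|^{2\gamma_2+1}$ over $[0,1]^m$. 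At $\gamma_1=-1/2$ this integral factorizes into $m/2$ disjoint two-variable integrals and, by Lemma \ref{Lem:fractional cov}, equals $[(\gamma+1)(2\gamma+3)]^{-m/2}$. The key refinement I need is a quantitative strengthening of Lemma \ref{Lem:beta asymp}, namely $\alpha\B(\alpha,\beta)=1+O(\alpha)$ as $\alpha\to 0$ uniformly in $\beta$ on compacts; this follows from $\alpha\Gamma(\alpha)=\Gamma(1+\alpha)=1+O(\alpha)$ and smoothness of $\Gamma$ away from the non-positive integers. A similar Taylor estimate (dominated by an integrable logarithmic singularity) yields $J(\gamma_1,\gamma_2) = J(-1/2,\gamma) + O(-\gamma_1-1/2)$, and the same refined beta asymptotics applied to the denominator of $A(\gamma_1,\gamma_2)$ in \eqref{eq:A} gives $A(\gamma_1,\gamma_2)^m = [(\gamma+3/2)(2\gamma+2)]^{m/2}\B(\gamma_1+1,-2\gamma_1-1)^{-m/2}\B(\gamma_2+1,-2\gamma_2-1)^{-m/2}(1+O(-\gamma_1-1/2))$, the error coming from the subordinate term in the denominator. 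Multiplying and using the elementary identity $(\gamma+3/2)(2\gamma+2)=(\gamma+1)(2\gamma+3)$ to cancel the $\gamma$-dependent constants, I obtain $A^m C_m^{\mathrm{lead}} = 2 + O(-\gamma_1-1/2)$.

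Putting the two estimates together yields $\kappa_m(Z_{\gamma_1,\gamma_2}(1)) = \tfrac{1}{2}(m-1)!\cdot 2 + O(-\gamma_1-1/2) = (m-1)! + O(-\gamma_1-1/2)$. The main obstacle will be the bookkeeping in the refined beta asymptotics: one has to verify that all error terms (the secondary term in the denominator of $A$, the $\gamma_1$-dependence of the $|s_j-s_{j-1}|^{2\gamma_1+1}$ factors inside $J$, and the ``rest'' contribution) combine as $O(-\gamma_1-1/2)$ rather than accidentally exploding when raised to the $m/2$-th power; this reduces to checking that each factor is $1+O(-\gamma_1-1/2)$ and using $(1+x)^{m/2}=1+O(x)$ for fixed $m$.
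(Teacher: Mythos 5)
Your proposal is correct and follows essentially the same route as the paper: split $C_m$ into the contribution of the two alternating $\sigma$'s and a remainder whose terms carry at most $m/2-1$ diverging beta factors, so that after multiplying by $A(\gamma_1,\gamma_2)^m\asymp(-2\gamma_1-1)^{m/2}$ the remainder is $O(-\gamma_1-1/2)$, and then refine the asymptotics of the leading term. You are in fact more explicit than the paper about the quantitative ingredients needed ($\alpha\B(\alpha,\beta)=1+O(\alpha)$, the $O(-\gamma_1-1/2)$ rate for the cyclic integral, and the cancellation of the $\gamma_2$-dependent constants), which the paper leaves implicit when it upgrades the $\sim$ relations of (\ref{eq:A(gamma)^m}) and (\ref{eq:C asymp}) to expansions with error terms.
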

\begin{proof}
We are focusing here on the marginal distribution and hence $t=1$, $c=1$ and $n=1$ in (\ref{eq:C(t1,...,tk)}).
To get the rate of convergence $O(-\gamma_1-1/2)$, we need to expand $C_m(\gamma_1,\gamma_2;1,1)$ to a higher order than (\ref{eq:C asymp}).
Following the preceding proof of Theorem \ref{Thm:main edge}, we need to consider the
$\sigma$'s with the second most  occurrences of $\sigma_{j-1}'=\sigma_{j}=1$. These $\sigma$'s have $\sigma_{j-1}'=\sigma_{j}=1$ occurring $m/2-1$ times instead of $m/2$ times as in (\ref{eq:1,2,1,2}).  Adding this type of $\sigma$'s into (\ref{eq:C asymp}), we have
\[
C_m(\gamma_1,\gamma_2;1,1)=c_{\gamma,m}(-\gamma_1-1/2)^{-m/2}+O\left((-\gamma_1-1/2)^{-m/2+1}\right),
\]
where $c_{\gamma,m}$ is the constant given by (\ref{eq:C asymp}) with $t=1$, $c=1$ and $n=1$.
By Proposition \ref{Pro:joint cum},
\[
\kappa_m\left(Z_{\gamma_1,\gamma_2}(1) \right)=\frac{1}{2}(m-1)!A(\gamma_1,\gamma_2)^m  C_m(\gamma_1,\gamma_2;1,1).
\]
So the conclusion follows in view of the expression $A(\gamma_1,\gamma_2)^m$ in (\ref{eq:A(gamma)^m}). 
\end{proof}

\subsection{Proof of Theorem \ref{Thm:main corner (-1/2,-1)}}

\begin{Lem}\label{Lem:delicate asymp}
Let $t_1,\ldots,t_m>0$, and $m\ge 4$ be an even integer. Consider the function:
\begin{align}\label{eq:f(a,b;t)}
f(a,b;\mathbf{t})=\int_0^{t_1}\ldots\int_0^{t_m}& |x_1-x_m|^a |x_2-x_1|^b |x_3-x_2|^a |x_4-x_3|^b \ldots\\
&\times |x_{m-1}-x_{m-2}|^a |x_{m}-x_{m-1}|^b d\mathbf{x},\notag
\end{align}
where $-1<a,b<0$.
Then as $(a,b)\rightarrow(0,-1)$, we have that
$$
f(a,b;\mathbf{t})\sim  (b+1)^{-m/2} \prod_{i=2,4,...m} \left( t_i+ t_{i-1}-|t_{i}-t_{i-1}| \right).
$$
\end{Lem}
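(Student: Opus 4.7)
The strategy rests on the observation that as $(a,b)\to(0,-1)$ the coupling factors $|x_i-x_j|^a$ tend pointwise to $1$, while the singular factors $|x_{2j}-x_{2j-1}|^b$ produce the $(b+1)^{-m/2}$ divergence. Setting $a=0$ completely decouples the integral over the $m/2$ pairs $(x_{2j-1},x_{2j})$, and each pair integral is given explicitly by Lemma \ref{Lem:fractional cov}. Therefore the plan is to show
$$
f(a,b;\mathbf{t}) \;=\; f(0,b;\mathbf{t}) \;+\; o\bigl((b+1)^{-m/2}\bigr)\quad\text{as }(a,b)\to(0,-1),
$$
and then to read off the limit from the closed form of $f(0,b;\mathbf{t})$.

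For the closed form: with $a=0$ the integral factorizes, and Lemma \ref{Lem:fractional cov} applied to each pair gives
$$
f(0,b;\mathbf{t}) \;=\; \prod_{j=1}^{m/2}\int_0^{t_{2j-1}}\!\!\int_0^{t_{2j}} |x_{2j}-x_{2j-1}|^b\,dx_{2j-1}\,dx_{2j} \;=\; \prod_{j=1}^{m/2}\frac{t_{2j-1}^{b+2}+t_{2j}^{b+2}-|t_{2j-1}-t_{2j}|^{b+2}}{(b+1)(b+2)}.
$$
Multiplying by $(b+1)^{m/2}$ and letting $b\to-1$ (so $b+2\to 1$) gives
$$
(b+1)^{m/2}f(0,b;\mathbf{t}) \;\longrightarrow\; \prod_{j=1}^{m/2}\bigl(t_{2j-1}+t_{2j}-|t_{2j-1}-t_{2j}|\bigr),
$$
which is exactly the right-hand side of the claim (writing $i=2j$ in the product).

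The technical heart of the argument, and the main obstacle, is to control $(b+1)^{m/2}\bigl[f(a,b;\mathbf{t})-f(0,b;\mathbf{t})\bigr]$ through a \emph{joint} (not iterated) limit in which the base integral already blows up. I would bound the excess pointwise by $\bigl|\prod_{\text{coupling}}|x_i-x_j|^a-1\bigr|\le C\sum_{\text{coupling}}\bigl||x_i-x_j|^a-1\bigr|$, which is valid once $a$ is near $0$ and the factors stay in a neighborhood of $1$, and split the integration domain into: (i) the region where every coupling distance exceeds some small $\eta>0$, on which the excess tends to zero uniformly in $\mathbf{x}$ as $a\to 0$, so its contribution after multiplication by $(b+1)^{m/2}$ is $o(1)\cdot (b+1)^{m/2}f(0,b;\mathbf{t})=o(1)$; and (ii) the complementary region, where some coupling distance is $\le\eta$, whose contribution can be bounded uniformly in $(a,b)$ near $(0,-1)$ via the power-counting criterion (Lemma \ref{Lem:power counting} / Corollary \ref{Cor:bound circular}) applied to the full circular integrand with exponents $(a,b,a,b,\ldots)$. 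One checks that all the relevant subsets $W$ yield $d(P,W)>0$ in a neighborhood of $(a,b)=(0,-1)$, because the $b$-exponents satisfy $b>-1$ and on every subset the sum of exponents stays bounded below by a positive constant (here the requirement $m\ge 4$ enters, as it guarantees $(m/2)(a+b)+m$ is bounded away from $1$ near the limit). Finally one lets $\eta\to 0$ after $(a,b)\to(0,-1)$ to close the argument.
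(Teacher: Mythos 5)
Your identification of the main term is exactly the paper's: both proofs reduce the claim to showing $f(a,b;\mathbf{t})=f(0,b;\mathbf{t})+o\bigl((b+1)^{-m/2}\bigr)$ and then read off the limit from the factorized closed form of $f(0,b;\mathbf{t})$ via Lemma \ref{Lem:fractional cov} (the lower bound $f(a,b;\mathbf{t})\ge f(0,b;\mathbf{t})$ is immediate after rescaling so that all $t_i<1$, since then each $a$-factor is $\ge 1$). Your treatment of region (i) is also fine: there the $a$-factors lie in $[1,\eta^a]$, so the excess is $o(1)\cdot f(0,b;\mathbf{t})$ uniformly.

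The gap is in region (ii). You claim its contribution can be bounded \emph{uniformly in $(a,b)$ near $(0,-1)$} by the power-counting criterion. This is false, and Corollary \ref{Cor:bound circular} cannot deliver it: that corollary requires the exponents to satisfy $\alpha_i\ge C_1$ for a \emph{fixed} $C_1>-1$, whereas here $b\rightarrow -1$. The failure is not merely technical: the set where some $a$-distance (say $|x_3-x_2|$) is $\le\eta$ still contains the full singular structure of all $m/2$ factors $|x_{2j}-x_{2j-1}|^b$, so already at $a=0$ its contribution is bounded below by $c\,\eta\,(b+1)^{-m/2}$, which is unbounded as $b\rightarrow -1$. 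What you actually need is an upper bound of the form $C(\eta)(b+1)^{-m/2}$ (or $o(1)\cdot(b+1)^{-m/2}$) with explicit control of how the $a$-singularities and $b$-singularities interact around the cycle, and extracting that is precisely the nontrivial part of the lemma. The paper does it by first applying Cauchy--Schwarz to the cyclic integrand (viewing $|x_1-x_m|^a|x_3-x_2|^a$ as the function and the rest as the measure), which breaks the cycle so that in each resulting integral some variables appear in only one factor and can be integrated out one at a time; it then writes $|x|^{a}=1+h_a(x)$, identifies the leading term with $f(0,b;\mathbf{t})$, and bounds each remainder term iteratively by $C\,[(2a+1)^{-1}-1]\,(b+1)^{-m/2}=o(1)(b+1)^{-m/2}$. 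Some device of this kind (decoupling the cycle, then quantitative one-variable estimates tracking both $(2a+1)^{-1}-1$ and $(b+1)^{-1}$) is needed to close your step (ii); the power-counting lemma, which only certifies finiteness for exponents in the open region and boundedness on compacta away from the boundary, does not see the rate.
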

\begin{proof}
First,  assume without loss of generality  that $t_1,\ldots t_m<1$. Otherwise one can scale them by a change of variables. 

We first derive a lower bound for $f(a,b;\mathbf{t})$. Since each $|x_i-x_{i-1}|^{a}\ge 1$, one has by Lemma \ref{Lem:fractional cov} that
\begin{align}\label{eq:lower est}
f(a,b;\mathbf{t})\ge f(0,b;\mathbf{t})&=\prod_{i=2,4,...m} \int_{0}^{t_i} \int_0^{t_{i-1}} |x_i-x_{i-1}|^{b} dx_{i}dx_{i-1} \notag
\\&=(b+1)^{-m/2}(b+2)^{-m/2} \prod_{i=2,4,...m} \left( t_i^{b+2}+ t_{i-1}^{b+2}-|t_{i}-t_{i-1}|^{b+2} \right)\notag \\&
\sim (b+1)^{-m/2} \prod_{i=2,4,...m} \left( t_i+ t_{i-1}-|t_{i}-t_{i-1}| \right) \text{\quad as $b\rightarrow -1$.
}
\end{align}

To get an upper bound for $f(a,b;\mathbf{t})$, we  apply the Cauchy-Schwarz inequality  to break the cyclic structure. In particular in  (\ref{eq:f(a,b;t)}), view $|x_1-x_m|^{a}|x_3-x_2|^{a}$ as the integrand, and treat the other factors as the density of measure.  We have
\begin{align}\label{eq:C-S}
f(a,b;\mathbf{t}) \le \sqrt{f_1(a,b;\mathbf{t})f_2(a,b;\mathbf{t})},
\end{align}
where
\begin{align*}
f_1(a,b;\mathbf{t})=\int_0^{t_1}dx_1\ldots \int_0^{t_m} dx_m &|x_1-x_m|^{2a} |x_2-x_1|^b |x_4-x_3|^b|x_5-x_4|^a \ldots\\
\times& |x_{m-1}-x_{m-2}|^a |x_{m}-x_{m-1}|^b,
\end{align*}
and
\begin{align*}
f_2(a,b;\mathbf{t})=\int_0^{t_1}dx_1\ldots \int_0^{t_m} dx_m &|x_3-x_2|^{2a} |x_2-x_1|^b |x_4-x_3|^b|x_5-x_4|^a \ldots\\
\times & |x_{m-1}-x_{m-2}|^a |x_{m}-x_{m-1}|^b.
\end{align*}
Set 
\[
|x|^a=1+h_a(x).
\]
Then the integrand in $f_1$ can be rewritten as 
\[
[1+h_{2a}(x_1-x_m)] |x_2-x_1|^b |x_4-x_3|^b[1+h_a(x_5-x_4)] \ldots [1+h_{a}(x_{m-1}-x_{m-2})] |x_{m}-x_{m-1}|^b.
\]
Observe that the product of terms involving neither $h_a$ nor $h_{2a}$ equals $f(0,b;\mathbf{t})$. Hence one can write
\begin{align*}
f_1(a,b;\mathbf{t})=f(0,b;\mathbf{t})+R(a,b;\mathbf{t}),
\end{align*}
where  the remainder $R(a,b;\mathbf{t})$ is a sum of terms each  involving at least one $h_{a}$ or $h_{2a}$. We claim that $|R(a,b;\mathbf{t})|=o\left((b+1)^{-m/2}\right)$. Indeed, let $R_1(a,b;\mathbf{t})$ be the term of $R(a,b;\mathbf{t})$  involving only one $h_{2a}$ and no other $h_{a}$.
Using the fact that when $f$ is a non-negative function and $0<x_1,x_2<t$, we have
\[
\int_0^t f(x_2-x_1)dx_2=\int_{-x_1}^{t-x_1} f(x)dx \le \int_{-1}^{1}f(x)dx.
\]
Therefore,
\begin{align}
&|R_1(a,b;\mathbf{t})|\notag\\
=&\int_0^{t_1}dx_1\ldots \int_0^{t_m} dx_m~ h_{2a}(x_1-x_m) |x_2-x_1|^b |x_4-x_3|^b \ldots |x_{m}-x_{m-1}|^b\notag\\
\le &\int_0^{t_1}dx_1\int_0^{t_3}dx_3\ldots \int_0^{t_m} dx_m ~h_{2a}(x_1-x_m) \int_{-1}^1|x_2|^b dx_2~ |x_4-x_3|^b \ldots |x_{m}-x_{m-1}|^b\notag\\
\le& 2(b+1)^{-1}\int_0^{t_3}dx_3\ldots \int_0^{t_m} dx_m \int_{-1}^{1} h_{2a}(x_1) dx_1 ~ |x_4-x_3|^b \ldots |x_{m}-x_{m-1}|^b\notag\\
\le& 2(b+1)^{-1}\int_0^{t_3}dx_3\ldots \int_0^{t_m} dx_m \int_{-1}^{1} (|x_1|^{2a}-1) dx_1 ~ |x_4-x_3|^b \ldots |x_{m}-x_{m-1}|^b\notag \\
= &4[(2a+1)^{-1}-1] (b+1)^{-1} \int_0^{t_3}dx_3\ldots \int_0^{t_m} dx_m ~ |x_4-x_3|^b|x_6-x_5|^b \ldots |x_{m}-x_{m-1}|^b  \notag\\
\le &\ldots \le C [(2a+1)^{-1}-1]  (b+1)^{-m/2}=o(1)(b+1)^{-m/2}. \label{eq:delicate bound}
\end{align}
Similar estimates apply to the other terms of $R(a,b;\mathbf{t})$, which may involve a greater number of $h_a$ or $h_{2a}$, and end up converging faster to zero as $a\rightarrow 0$. Hence
\[
f_1(a,b;\mathbf{t})\le f(0,b;\mathbf{t})+o\left( (b+1)^{-m/2} \right)  
\sim (b+1)^{-m/2} \prod_{i=2,4,...m} \left( t_i+ t_{i-1}-|t_{i}-t_{i-1}|\right)
\]
using (\ref{eq:lower est}). The same estimate holds for $f_2(a,b;\mathbf{t})$. 
Hence by (\ref{eq:C-S}),
\begin{equation}\label{eq:upper est}
f(a,b;\mathbf{t}) \le f(0,b;\mathbf{t})+o\left( (b+1)^{-m/2} \right) \sim (b+1)^{-m/2} \prod_{i=2,4,...m} \left( t_i+ t_{i-1}-|t_{i}-t_{i-1}|\right).
\end{equation}
Combining (\ref{eq:lower est}) and (\ref{eq:upper est}) concludes the proof.
\end{proof}

\begin{Lem}\label{Lem:corner 1 limit cumulant}
Let $X_\rho(t)$ be the limit process in (\ref{eq:X_rho}).  For $m\ge 3$, 
\[
\kappa_m \left( \sum_{i=1}^n c_i X_\rho(t_i) \right)=
\begin{cases}
\rho^{m/2}  (m-1)! \left[\sum_{i,j=1}^n  c_ic_j~ \frac{1}{2} \left( |t_i| +|t_j| -|t_i-t_j| \right)\right]^{m/2} &\text{ if } m \text{ is }even;\\
 0 &\text{ if } m \text{ is }odd.
\end{cases}
\] 
\end{Lem}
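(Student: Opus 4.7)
The plan is to reduce the computation to two ingredients already established: additivity of cumulants under independence, and the product-normal cumulant identity derived in the proof of Lemma \ref{Lem:Z_gamma limit cumulant}.

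First I would rewrite the linear combination as
\[
\sum_{i=1}^n c_i X_\rho(t_i) = \rho^{1/2}\, W \sum_{i=1}^n c_i B(t_i) + (1-\rho)^{1/2} \sum_{i=1}^n c_i B'(t_i).
\]
Since $B$ and $B'$ are standard Brownian motions, both Gaussian sums $\sum_i c_i B(t_i)$ and $\sum_i c_i B'(t_i)$ are centered normal with the same variance $\sigma^2 := \sum_{i,j=1}^n c_i c_j (t_i \wedge t_j) = \sum_{i,j=1}^n c_ic_j\,\tfrac12(|t_i|+|t_j|-|t_i-t_j|)$. Writing these as $\sigma Z_1$ and $\sigma Z_2$ with $Z_1,Z_2$ standard normal, and noting that $W, Z_1, Z_2$ are mutually independent (since $W$ is independent of $(B,B')$ and $B,B'$ are independent of each other), one obtains
\[
\sum_{i=1}^n c_i X_\rho(t_i) \stackrel{d}{=} \sigma\bigl[\rho^{1/2}\, W Z_1 + (1-\rho)^{1/2}\, Z_2\bigr].
\]

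Next, by the additivity of cumulants for independent summands and the scaling property $\kappa_m(aX)=a^m\kappa_m(X)$, for any $m\ge 3$,
\[
\kappa_m\!\left(\sum_{i=1}^n c_i X_\rho(t_i)\right) = \sigma^m\bigl[\rho^{m/2}\kappa_m(W Z_1) + (1-\rho)^{m/2}\kappa_m(Z_2)\bigr].
\]
The Gaussian term $\kappa_m(Z_2)$ vanishes for every $m\ge 3$, so only $\kappa_m(WZ_1)$ contributes. But $WZ_1$ is exactly the standard product-normal variable handled in the proof of Lemma \ref{Lem:Z_gamma limit cumulant}: using the identity \eqref{eq:independent normal}, $WZ_1 = \tfrac12(Z_a^2 - Z_b^2)$ where $Z_a=(W+Z_1)/\sqrt{2}$ and $Z_b=(W-Z_1)/\sqrt{2}$ are independent standard normals and hence $Z_a^2,Z_b^2$ are independent $\chi_1^2$ variables with $\kappa_m(\chi_1^2)=2^{m-1}(m-1)!$. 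Therefore
\[
\kappa_m(WZ_1) = \Bigl(\tfrac12\Bigr)^m\bigl[\kappa_m(Z_a^2) + (-1)^m \kappa_m(Z_b^2)\bigr] = \bigl[1+(-1)^m\bigr](m-1)!,
\]
which equals $0$ for $m$ odd and $(m-1)!$ for $m$ even (in particular $m\ge 4$ even in our range).

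Combining the two steps gives $\kappa_m = 0$ when $m$ is odd, and
\[
\kappa_m\!\left(\sum_{i=1}^n c_i X_\rho(t_i)\right) = \rho^{m/2}(m-1)!\,\sigma^m = \rho^{m/2}(m-1)!\left[\sum_{i,j=1}^n c_ic_j\,\tfrac12\bigl(|t_i|+|t_j|-|t_i-t_j|\bigr)\right]^{m/2}
\]
when $m$ is even, which is exactly the desired formula. There is no real obstacle here: the only non-bookkeeping ingredient is the product-normal cumulant computation, and that is already packaged inside the proof of Lemma \ref{Lem:Z_gamma limit cumulant}, so the argument is essentially a short application of cumulant additivity together with that lemma.
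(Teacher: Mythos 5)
Your proof is correct and follows essentially the same route as the paper: split by independence of $W$, $B$ and $B'$, note the Gaussian summand contributes nothing for $m\ge 3$, and reduce to the product-normal cumulant computation already packaged in Lemma \ref{Lem:Z_gamma limit cumulant} (the paper simply cites that lemma with $\gamma=-1$ rather than re-deriving it). One small slip: your displayed evaluation of $\kappa_m(WZ_1)$ should read $\tfrac12\bigl[1+(-1)^m\bigr](m-1)!$ — you dropped a factor of $\tfrac12$ since $(\tfrac12)^m\cdot 2^{m-1}=\tfrac12$ — although the values you then state ($0$ for odd $m$ and $(m-1)!$ for even $m$) are the correct ones.
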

\begin{proof}
Then because $B_1(t)$, $B_2(t)$ and $W$ are independent,
\[
\kappa_m \left( \sum_{i=1}^n c_i X_\rho(t_i) \right) 
= \kappa_m\left(\rho^{1/2} \sum_{i=1}^n c_i  W B(t_i)\right) + \kappa_m\left((1-\rho)^{1/2} \sum_{i=1}^n c_i B'(t_i)\right).
\]
Now note that the second term is Gaussian and thus the cumulants of order higher  than 2 is always zero. Applying Lemma \ref{Lem:Z_gamma limit cumulant} (with $\gamma=-1$)  to the first term concludes the proof.
\end{proof}

Now we proceed to the proof of Theorem \ref{Thm:main corner (-1/2,-1)}.
Again by Lemma \ref{Lem:tight}, tightness always holds. We only need to show the convergence of the finite-dimensional distributions.

\begin{proof}[Proof of $\ConvFDD$ in Theorem \ref{Thm:main corner (-1/2,-1)}.]
The distribution of $\sum_{i=1}^n c_i X_\rho(t_i)$ is moment-determinate since it is a second-order polynomial in normal random variables (see, e.g., \citet{slud:1993:moment}). One can therefore use a method of moments.

We analyze the asymptotics of the cumulants in (\ref{eq:joint cum gen Ronsen}) with $m\ge 3$ and $A(\gamma_1,\gamma_2)$ as given in (\ref{eq:A}) as $(\gamma_1,\gamma_2)\rightarrow (-1/2,-1)$. First, by Lemma \ref{Lem:beta asymp},
\begin{align}\label{eq:corner 1 A}
A(\gamma_1,\gamma_2)^m&\sim (\gamma_1+\gamma_2+3/2)^{m/2} \big[\B(1/2,1/2)\B(\gamma_2+1,1/2)+\B(1/2,-2\gamma_1-1)\B(\gamma_2+1,1)\big]^{-m/2}\notag\\
&\sim (\gamma_1+\gamma_2+3/2)^{m/2} \big[\B(1/2,-2\gamma_1-1)\B(\gamma_2+1,1)\big]^{-m/2}\notag\\
&\sim (\gamma_1+\gamma_2+3/2)^{m/2} (-2\gamma_1-1)^{m/2} (\gamma_2+1)^{m/2},
\end{align}
which converges to $0$.

Now we analyze the asymptotics of the terms  of $C_m(\gamma_1,\gamma_2;\mathbf{t},\mathbf{c})$ in (\ref{eq:C(t1,...,tk)}) as $\sigma$ varies in $\{1,2\}^m$.
 When $m$ is even, consider first the two main terms where 
\[\text{$\sigma=(1,2,1,2,\ldots,1,2)$ and $\sigma=(2,1,2,1,\ldots,2,1)$},\]
which correspond to $\#\{j:~ \sigma_j=\sigma_{j-1}'=1\}=m/2$. As in the proof of Theorem \ref{Thm:main edge}, the corresponding term when $\sigma=(1,2,1,2,\ldots,1,2)$
in (\ref{eq:C(t1,...,tk)}) (it is the same for $\sigma=(2,1,2,1,\ldots,2,1)$)  is
\begin{align}
 &\sum_{i_1,\ldots,i_m=1}^n c_{i_1}\ldots c_{i_m} \B(\gamma_1+1,-2\gamma_1-1)^{m/2}\B(\gamma_2+1,-2\gamma_2-1)^{m/2} \notag\\
&\times\int_0^{t_{i_1}}ds_1 \ldots \int_0^{t_{i_m}}ds_m 
 |s_1-s_m|^{2\gamma_1+1} |s_2-s_1|^{2\gamma_2+1} \ldots |s_{m-1}-s_
{m-2}|^{2\gamma_1+1} |s_m-s_{m-1}|^{2\gamma_2+1}\notag\\
&\sim  (-2\gamma_1-1)^{-m/2}(\gamma_2+1)^{-m} 
\left[\sum_{i,j=1}^n  c_ic_j~ \frac{1}{2} \left( |t_i| +|t_j| -|t_i-t_j| \right)\right]^{m/2},\label{eq:corner 1 main asymp}
\end{align}
where the last line is due to  Lemma \ref{Lem:beta asymp} and Lemma \ref{Lem:delicate asymp}. 
 
Any other $\sigma$ term in (\ref{eq:C(t1,...,tk)}) is negligible because it is of order $O\big((-2\gamma_1-1)^{-r}(\gamma_2+1)^{-m}\big)$, where 
\begin{equation}\label{eq:r count}
r=\#\{j:~ \sigma_j=\sigma_{j-1}'=1\}=\#\{j:~ \sigma_j=\sigma_{j-1}'=2\}<m/2.
\end{equation}
Indeed, let us suppose (\ref{eq:r count}) and examine a corresponding $\sigma$ term in the expansion of the product $\prod_{j=1}^m$ in  (\ref{eq:C(t1,...,tk)}). Call this term $P_m$.
In $P_m$, there are $r$ factors of
\begin{equation}\label{eq:term type 1}
\B(\gamma_1+1,-2\gamma_1-1)|s_j-s_{j-1}|^{2\gamma_1+1},
\end{equation}
and there are $r$ factors of
\begin{equation}\label{eq:term type 2}
\B(\gamma_2+1,-2\gamma_2-1)|s_j-s_{j-1}|^{2\gamma_2+1}.
\end{equation}
Since (\ref{eq:r count}) implies that
$\#\{j:~ \sigma_j\neq \sigma_{j-1}'\}=m-2r$,
 there are also $m-2r$ factors in $P_m$, which are  either 
\[
(s_j-s_{j-1})_+^{\gamma_1+\gamma_2+1}\B(\gamma_1+1,-\gamma_1-\gamma_2-1)+ (s_{j-1}-s_j)_+^{\gamma_1+\gamma_2+1}\B(\gamma_2+1,-\gamma_1-\gamma_2-1),
\]
or 
\[
(s_j-s_{j-1})_+^{\gamma_1+\gamma_2+1}\B(\gamma_2+1,-\gamma_1-\gamma_2-1)+ (s_{j-1}-s_j)_+^{\gamma_1+\gamma_2+1}\B(\gamma_1+1,-\gamma_1-\gamma_2-1).
\] 
These last two expressions are both bounded by
\begin{equation}\label{eq:term type 3}
|s_j-s_{j-1}|^{\gamma_1+\gamma_2+1}\big[\B(\gamma_2+1,-\gamma_1-\gamma_2-1)+\B(\gamma_1+1,-\gamma_1-\gamma_2-1)\big].
\end{equation}
In view of Lemma \ref{Lem:beta asymp}, the beta functions in
 (\ref{eq:term type 1}), (\ref{eq:term type 2}) and (\ref{eq:term type 3}) behave like $(-2\gamma_1-1)^{-1}$, $(\gamma_2+1)^{-1}$ and $(\gamma_2+1)^{-1}$ respectively. Therefore, the beta functions contribute an order
\[
(-2\gamma_1-1)^{-r}(\gamma_2+1)^{-r}(\gamma_2+1)^{-(m-2r)}=(-2\gamma_1-1)^{-r}(\gamma_2+1)^{-(m-r)}.\] 
The integrand involving $|s_{j-1}-s_j|^{2\gamma_2+1}$  contribute an order $(\gamma_2+1)^{-r}$. So the total order is $(-2\gamma_1-1)^{-r}(\gamma_2+1)^{-m}$.
These arguments can be rigorously justified by first applying the Cauchy-Schwartz  as in (\ref{eq:C-S}) to break the cyclic integrand, and  then bound   as in  (\ref{eq:delicate bound}).
Therefore in view of (\ref{eq:corner 1 main asymp}),  and after also including the case $\sigma=(2,1,2,1,\ldots,2,1)$, we conclude that
\begin{equation}\label{eq:corner 1 C}
C_m(\gamma_1,\gamma_2;\mathbf{t},\mathbf{c}) \sim 2 (-2\gamma_1-1)^{-m/2}(\gamma_2+1)^{-m} 
\left[\sum_{i,j=1}^n  c_ic_j~ \frac{1}{2} \left( |t_i| +|t_j| -|t_i-t_j| \right)\right]^{m/2},
\end{equation}
if $m$ is even.

When $m$ is odd, there are at most $(m-1)/2$ times of $\sigma_{j}=\sigma'_{j-1}=1$ or $\sigma_{j}=\sigma'_{j-1}=2$. It can be shown similarly that $C_m(\gamma_1,\gamma_2;\mathbf{t},\mathbf{c})$ is of the order 
\begin{equation}\label{eq:odd order}
(-2\gamma_1-1)^{-(m-1)/2}(\gamma_2+1)^{-m},
\end{equation}
 which  is dominated by the order of convergence to $0$ of $A(\gamma_1,\gamma_2)^m$ in (\ref{eq:corner 1 A}).
Now combining this fact with (\ref{eq:rho limit corner 1}), (\ref{eq:joint cum gen Ronsen}),  (\ref{eq:corner 1 A}) and (\ref{eq:corner 1 C}), we have  when $m$ is even,
\begin{align}
\kappa_m\left(\sum_{i=1}^n c_i Z_{\gamma_1,\gamma_2}(t_i)\right)&\sim
\left(\frac{\gamma_1+\gamma_2+3/2}{\gamma_2+1}\right)^{m/2} (m-1)! \left[\sum_{i,j=1}^n  c_ic_j~ \frac{1}{2} \left( |t_i| +|t_j| -|t_i-t_j| \right)\right]^{m/2}\label{eq:kappa_m asymp even corner}\\
&\rightarrow \rho^{m/2}  (m-1)! \left[\sum_{i,j=1}^n  c_ic_j~ \frac{1}{2} \left( |t_i| +|t_j| -|t_i-t_j| \right)\right]^{m/2},\notag
\end{align}
and when $m$ is odd,
\[
\kappa_m\left(\sum_{i=1}^n c_i Z_{\gamma_1,\gamma_2}(t_i)\right) \rightarrow 0.
\]
Now use  Lemma \ref{Lem:corner 1 limit cumulant} to identify the limit process.
\end{proof}

\subsection{Proof of Theorem \ref{Thm:main corner (-1/2,-1/2)}}

We state first a combinatorial result.
\begin{Lem}\label{Lem:count}
Let $\sigma=(\sigma_1,\ldots,\sigma_m)\in \{1,2\}^m$. Let $\sigma'=(\sigma_1',\ldots,\sigma_m')$ be the complement of $\sigma$, namely, $\sigma'_i=1$ if $\sigma_i=2$ and $\sigma'_i=2$ if $\sigma_i=1$, $i=1,\ldots,m$. 
 Let $\sigma_{0}$  be understood as $\sigma_m$ and let $\sigma_{0}'$  be understood as $\sigma_m'$. Then for a fixed integer $0\le r\le m/2$,
\begin{equation}\label{eq:sigma count}
 \#\Big\{\sigma\in \{1,2\}^m:~ \#\{j: ~\sigma_j=\sigma_{j-1}'=1\}=r\Big\}=2{m\choose 2r}.
\end{equation}
\end{Lem}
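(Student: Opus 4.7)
The plan is to reinterpret the counting condition as counting cyclic binary sequences with a prescribed number of sign changes, and then use a direct bijection.

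First I would translate the defining condition. Since $\sigma'_{j-1}=1$ is equivalent to $\sigma_{j-1}=2$, the event $\sigma_j=\sigma'_{j-1}=1$ is exactly the event of a ``$2\to 1$ transition'' from position $j-1$ to position $j$, with the convention $\sigma_0:=\sigma_m$ making the sequence cyclic. Write
\[
T(\sigma)=\{j\in\{1,\dots,m\}:\sigma_j\neq \sigma_{j-1}\}
\]
for the set of transition positions. Each element of $T(\sigma)$ is either a $1\to 2$ transition or a $2\to 1$ transition, and in the cyclic order these two types must strictly alternate (after any $1\to 2$ the next transition must be $2\to 1$, and vice versa). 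Consequently the number of $1\to 2$ transitions equals the number of $2\to 1$ transitions, and in particular $|T(\sigma)|$ is even, say $|T(\sigma)|=2k$, with exactly $k$ transitions of type $2\to 1$.

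Next I would identify the condition in the lemma with a condition on $|T(\sigma)|$. The above observation yields
\[
\#\{j:\sigma_j=\sigma'_{j-1}=1\}=r\quad\Longleftrightarrow\quad |T(\sigma)|=2r.
\]
Thus it suffices to count cyclic binary sequences with exactly $2r$ transitions.

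The final step is a straightforward bijective count. Given any subset $T\subseteq\{1,\dots,m\}$ with $|T|=2r$ and any choice of $\sigma_1\in\{1,2\}$, the entire sequence $\sigma$ is determined: move around the cycle starting from position $1$, flipping the current value whenever the next index lies in $T$. Since $|T|$ is even, one returns to the starting value after $m$ steps, so this procedure produces a well-defined cyclic sequence with transition set exactly $T$. Conversely, every cyclic $\sigma$ with $|T(\sigma)|=2r$ arises uniquely in this way from the pair $(T(\sigma),\sigma_1)$. Hence the number of such $\sigma$ is $2\binom{m}{2r}$, which proves (\ref{eq:sigma count}).

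There is no real obstacle here: the only subtlety is ensuring that the alternation argument is stated cyclically (so that the number of $1\to 2$ and $2\to 1$ transitions must be equal, not merely equal up to a boundary correction), and checking the edge case $r=0$, where the two constant sequences $\sigma\equiv 1$ and $\sigma\equiv 2$ give $2=2\binom{m}{0}$ as required.
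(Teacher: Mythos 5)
Your proof is correct and follows essentially the same route as the paper's: both identify the condition $\sigma_j=\sigma'_{j-1}=1$ with a $2\to1$ alternation, use the cyclic alternation of the two transition types to conclude that $r$ such occurrences correspond to $2r$ total alternations, and then count by choosing the $2r$ alternation positions together with the starting value. If anything, your version is slightly more careful than the paper's in noting explicitly that an even transition set is exactly what makes the cyclic reconstruction consistent.
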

\begin{proof}
 If  $\sigma_{j-1}\neq \sigma_j$,  we say that there is an alternation at $j$. There are ${m\choose k}$ ways to place $k$ alternations.  The positions of the alternations determine the whole $\sigma$ up to the replacement of  $1$'s  into $2$'s and vice-versa. Hence there are $2{m\choose k}$ possible $\sigma$'s.
To relate $k$ to $r$, note that the relation $\sigma_j=\sigma_{j-1}'$ holds if and only if  $\sigma_{j-1}\neq \sigma_j$. Since 
\[r=\#\{j: ~\sigma_j=\sigma_{j-1}'=1\} =\#\{j:~\sigma_j=\sigma_{j-1}'=2\},\]
 we have 
\[
k=\#\{j:~\sigma_j\neq \sigma_{j-1}\}=\#\{j: ~\sigma_j=\sigma_{j-1}'=1\} +\#\{j:~\sigma_j=\sigma_{j-1}'=2\}=2r.
\]

\end{proof}

\begin{Lem}\label{Lem:corner 2 limit cumulant}
Let $Y_\rho(t)$ be the limit process in (\ref{eq:Y_rho}).  For $m\ge 3$, 
\begin{align}\label{eq:Y_rho cumulant}
\kappa_m \left( \sum_{i=1}^n c_i Y_\rho(t_i) \right)=&\frac{\left[(\rho+1)^{-1}+(2\sqrt{\rho})^{-1}\right]^m+\left[(\rho+1)^{-1}-(2\sqrt{\rho})^{-1}\right]^m}{\left[(\rho+1)^{-2}+(4\rho)^{-1}\right]^{m/2}} \notag\\
&\times\left(\sum_{i=1}^n c_it_i\right)^m \frac{(m-1)!}{2}.
\end{align}
\end{Lem}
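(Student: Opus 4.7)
The plan is to exploit the very simple structure of $Y_\rho(t)$: it factors as $t$ times a fixed random variable $Z_\rho := \alpha X_1 + \beta X_2$, where $\alpha = [(\rho+1)^{-1}+(2\sqrt\rho)^{-1}]/D$ and $\beta = [(\rho+1)^{-1}-(2\sqrt\rho)^{-1}]/D$ with $D = \sqrt{2(\rho+1)^{-2}+(2\rho)^{-1}}$. Consequently $\sum_{i=1}^n c_i Y_\rho(t_i) = \bigl(\sum_{i=1}^n c_i t_i\bigr) Z_\rho$. By homogeneity of cumulants this reduces the lemma to computing $\kappa_m(Z_\rho)$, and $\sum_i c_i t_i$ appears only as the overall factor $(\sum c_i t_i)^m$.

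Next I would use independence of $X_1$ and $X_2$, which makes the cumulant additive: $\kappa_m(\alpha X_1+\beta X_2)=\alpha^m \kappa_m(X_1)+\beta^m \kappa_m(X_2)$. The cumulants of a standardized $\chi^2_1$ variable $X_i=(\chi^2_1-1)/\sqrt{2}$ are obtained from the cumulant generating function $-\tfrac{1}{2}\log(1-2t)=\sum_{k\ge 1}\frac{2^{k-1}}{k}t^k$ of $\chi^2_1$: the $m$-th cumulant of $\chi^2_1$ is $2^{m-1}(m-1)!$, and dividing by $\sqrt{2}$ scales the $m$-th cumulant (for $m\ge 2$) by $2^{-m/2}$, giving $\kappa_m(X_i)=2^{m/2-1}(m-1)!$.

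The final step is algebraic simplification. Setting $a=(\rho+1)^{-1}$ and $b=(2\sqrt\rho)^{-1}$, I note the identity $D^2=2(\rho+1)^{-2}+(2\rho)^{-1}=2(a^2+b^2)$, so that
\[
\alpha^m+\beta^m = \frac{(a+b)^m+(a-b)^m}{D^m} = \frac{(a+b)^m+(a-b)^m}{\bigl[2(a^2+b^2)\bigr]^{m/2}}.
\]
Combining these pieces,
\[
\kappa_m\!\left(\sum_{i=1}^n c_i Y_\rho(t_i)\right) = \bigl(\textstyle\sum_i c_it_i\bigr)^m \cdot \frac{(a+b)^m+(a-b)^m}{\bigl[2(a^2+b^2)\bigr]^{m/2}}\cdot (m-1)!\,2^{m/2-1},
\]
and the two factors of $2^{m/2}$ in the denominator combine with $2^{m/2-1}$ to give the prefactor $\tfrac{(m-1)!}{2}$ and the denominator $(a^2+b^2)^{m/2}=[(\rho+1)^{-2}+(4\rho)^{-1}]^{m/2}$, matching (\ref{eq:Y_rho cumulant}) exactly.

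There is no real obstacle here — the argument is a direct computation once one observes the factorization $Y_\rho(t)=tZ_\rho$ and uses the cumulant properties (homogeneity, additivity under independence). The only mild bookkeeping point is tracking the constant $2^{m/2-1}$ from the standardization of $\chi^2_1$ against the constants hidden inside $D$; verifying $D^2=2(a^2+b^2)$ is what makes the final formula collapse into the stated form.
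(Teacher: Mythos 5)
Your proof is correct and follows essentially the same route as the paper's: factor out $\sum_i c_i t_i$ by homogeneity, use additivity of cumulants for the independent standardized $\chi^2_1$ variables with $\kappa_m(X_i)=2^{m/2-1}(m-1)!$, and collapse the constants via $D^2=2(a^2+b^2)$. The bookkeeping of the factors of $2^{m/2}$ checks out and matches the stated formula exactly.
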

\begin{proof}
Let
\begin{equation*}
a_\rho=  \frac{(\rho+1)^{-1}+(2\sqrt{\rho})^{-1}}{\sqrt{2(\rho+1)^{-2}+(2\rho)^{-1}}}, \quad b_\rho= \frac{(\rho+1)^{-1}-(2\sqrt{\rho})^{-1}}{\sqrt{2(\rho+1)^{-2}+(2\rho)^{-1}}}
\end{equation*}
Because $X_1$ and $X_2$ are two independent standardized $\chi_1^2$ random variables, we have
\begin{align*}
\kappa_m \left( \sum_{i=1}^n c_i Y_\rho(t_i) \right)
&=\kappa_m \left( \sum_{i=1}^n c_i t_i(a_\rho X_1+b\rho X_2) \right)
=\left(\sum_{i=1}^n c_i t_i\right)^m [\kappa_m(a_\rho X_1)+\kappa_m(b_\rho X_2)]\\
&= \left(\sum_{i=1}^n c_i t_i\right)^m(a_\rho^m+b_\rho^m) \kappa(X_1)= 2^{m/2}(a_\rho^m+b_\rho^m)\left(\sum_{i=1}^n c_i t_i\right)^m  \frac{(m-1)!}{2}. 
\end{align*}
The factor $2^{m/2}(a_\rho^m+b_\rho^m)$ can be rewritten as the first factor in (\ref{eq:Y_rho cumulant}).
\end{proof}

Note that $a+b\sim A+B$ for $a,b,A,B>0$, if $a\sim A$, $b\sim B$ and $a/b\sim \lambda$, where  $\lambda$ is a fixed number from $0$ to $\infty$ (can be $\infty$), as will always be the case under our assumptions.

We now prove Theorem \ref{Thm:main corner (-1/2,-1/2)}.  In view of Lemma \ref{Lem:tight}, we only need to show the convergence of the finite-dimensional distributions.
\begin{proof}[Proof of $\ConvFDD$ in Theorem \ref{Thm:main corner (-1/2,-1/2)}]
We can  use a method of moments again because  the limit $\sum_{i=1}^n c_i Y_\rho(t_i)$ is a second-order polynomial in normal random variables. We analyze the asymptotics of the cumulants in (\ref{eq:joint cum gen Ronsen}) with $m\ge 3$ and $A(\gamma_1,\gamma_2)$  in (\ref{eq:A}) as $(\gamma_1,\gamma_2)\rightarrow (-1/2,-1/2)$.
Lemma \ref{Lem:beta asymp} yields
\begin{equation}\label{eq:corner 2 A}
A(\gamma_1,\gamma_2)^m \sim \left[(-\gamma_1-\gamma_2-1)^{-2}+ (-2\gamma_1-1)^{-1}(-2\gamma_2-1)^{-1} \right]^{-m/2},
\end{equation}
and $C_m$ in (\ref{eq:C(t1,...,tk)}) satisfies
\begin{equation}\label{eq:corner 2 C}
C_m(\gamma_1,\gamma_2;\mathbf{t},\mathbf{c})\sim \left(\sum_{i=1}^n c_i t_i\right)^m  \sum_{\sigma\in\{1,2\}^m} \prod_{j=1}^m (-\gamma_{\sigma_j}-\gamma_{\sigma_{j-1}'}-1)^{-1},
\end{equation}
where we get the term $\left(\sum_{i=1}^n c_i t_i\right)^m$ from 
$\sum_{i_1,\ldots,i_m=1}^n  c_{i_1}\ldots c_{i_m} \int_{0}^{t_{i_1}}ds_1 \ldots \int_{0}^{t_{i_m}}ds_m.
$

Let $r=\#\{j: ~\sigma_j=\sigma_{j-1}'=1\}=\#\{j: ~\sigma_j=\sigma_{j-1}'=2\}$. Then using Lemma \ref{Lem:count}, we can write
\begin{align}\label{eq:corner 2 sum}
\sum_{\sigma\in\{1,2\}^m}\prod_{j=1}^m (-\gamma_{\sigma_j}- \gamma_{\sigma_{j-1}'}-1)^{-1}
=\sum_{0\le r\le m/2} 2{m \choose 2r} (-2\gamma_1-1)^{-r}(-2\gamma_2-1)^{-r} (-\gamma_1-\gamma_2-1)^{-(m-2r)}. 
\end{align}
Hence by (\ref{eq:joint cum gen Ronsen}), (\ref{eq:corner 2 A}), (\ref{eq:corner 2 C}) and (\ref{eq:corner 2 sum}), one has
\begin{equation}\label{eq:corner 2 cum inter}
\kappa_m\left(\sum_{i=1}^n c_i Z_{\gamma_1,\gamma_2}(t_i)\right)\sim  (m-1)! \left(\sum_{i=1}^n c_i t_i\right)^m \sum_{0\le r\le m/2} {m\choose 2r}  U(\gamma_1,\gamma_2;m,r).
\end{equation}
where
\[
U(\gamma_1,\gamma_2;m,r):=\frac{(-2\gamma_1-1)^{-r}(-2\gamma_2-1)^{-r} (-\gamma_1-\gamma_2-1)^{-(m-2r)}}{\left[(-\gamma_1-\gamma_2-1)^{-2}+ (-2\gamma_1-1)^{-1}(-2\gamma_2-1)^{-1}\right]^{m/2}}.
\]
As $(\gamma_1,\gamma_2)\rightarrow (-1/2,-1/2)$ and $(\gamma_1+1/2)/(\gamma_2+1/2)\rightarrow \rho \in [0,1]$, in the case $\rho>0$, some elementary calculation shows
\begin{equation}\label{eq:corner 2 U asymp}
U(\gamma_1,\gamma_2;m,r)\rightarrow \frac{\left[1/(2\sqrt{\rho})\right]^{2r} \left[1/(\rho+1)\right]^{m-2r}}{\left[(\rho+1)^{-2}+(4\rho)^{-1}\right]^{m/2}}, 
\end{equation}
and in the case $\rho=0$,  
\begin{equation}\label{eq:U 0-1}
U(\gamma_1,\gamma_2;m,r)\rightarrow \begin{cases}
1 &\text{ if }r=m/2 \text{ ($m$ must be even in this case)};\\
0 &\text{ if } r<m/2.
\end{cases}
\end{equation}
This expression (\ref{eq:U 0-1}) also coincides with the limit in (\ref{eq:corner 2 U asymp}) as $\rho\rightarrow 0$.  In the argument below we omit the case $\rho=0$, which can be either treated separately, or obtained by taking the limit  as $\rho\rightarrow 0$.  

Set $a=1/(2\sqrt{\rho})$ and $b=1/(\rho+1)$.
Using the identity $(a+b)^m+(a-b)^m = \sum _{0\le r\le m/2}2 {m \choose 2r} a^{2r} b^{m-2r}$, one can write following (\ref{eq:corner 2 cum inter}) and (\ref{eq:corner 2 U asymp}) that
\[
\kappa_m\left(\sum_{i=1}^n c_i Z_{\gamma_1,\gamma_2}(t_i)\right)\rightarrow  \frac{(a+b)^m - (a-b)^m}{(a^2+b^2)^{m/2}} \left(\sum_{i=1}^n c_i t_i\right)^m \frac{(m-1)!}{2},
\]
which is (\ref{eq:Y_rho cumulant}).
Now use Lemma \ref{Lem:corner 2 limit cumulant} to identify the limit process, concluding the proof.
\end{proof}

\noindent\textbf{Additional results}
\smallskip
 
We deal now with the following additional three points:
\begin{enumerate}
\item We show that the weak convergence  proved in the previous theorems \emph{cannot} be strengthened to  convergence in $L^2(\Omega)$ nor even in probability; 
\medskip

\item We apply the results of  \citet{nourdin:peccati:2013:optimal} and \citet{eichelsbacher:thale:2014:malliavin} to determine the rate of convergence on the boundaries  $d$ and $e_1$ (or $e_2$);
\medskip

\item We include an alternate proof of Theorem \ref{Thm:main edge} in the spirit of Remark \ref{Rem:physical}  which provides further insight on the convergence.
\end{enumerate}

\section{No  convergence in $L^2(\Omega)$}\label{sec:add 1}
The generalized Rosenblatt process $Z_{\gamma_1,\gamma_2}(t)$ was defined in (\ref{eq:ori gen Ros proc}) (see also (\ref{eq:gen Rosenblatt proc})).  
We have shown weak convergence (convergence in distribution) for the generalized Rosenblatt process $Z_{\gamma_1,\gamma_2}(t)$ in previous theorems. Is it possible that some of these convergences are actually in a stronger mode, say, in probability? We provide a negative answer here.
\begin{Thm}\label{Thm:no L^2}
In Theorem \ref{Thm:main diag}, \ref{Thm:main edge}, \ref{Thm:main corner (-1/2,-1)} and \ref{Thm:main corner (-1/2,-1/2)}, the weak convergence cannot be extended to  convergence in $L^2(\Omega)$, nor even to convergence  in probability.
\end{Thm}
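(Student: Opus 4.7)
I argue by contradiction.  Suppose that along some sequence $(\gamma_1^{(n)},\gamma_2^{(n)})$ approaching the boundary as prescribed in one of Theorems \ref{Thm:main diag}--\ref{Thm:main corner (-1/2,-1/2)} the weak convergence can be upgraded to convergence in probability to the corresponding limit process $Y$.  Since $Z_{\gamma_1^{(n)},\gamma_2^{(n)}}(t)$ is a standardized element of the second Wiener chaos $H_2$ of the driving Brownian measure $B$, hypercontractivity (Lemma \ref{Lem:hypercontract}) supplies uniform $L^p$ bounds for every $p\ge 2$.  Convergence in probability therefore upgrades automatically to convergence in $L^2(\Omega)$, and because $H_2$ is $L^2$-closed the limit $Y(t)$ must itself lie in $H_2$.

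In the two cases where the weak limit is Gaussian---Theorem \ref{Thm:main diag} and the $\rho=0$ case of Theorem \ref{Thm:main corner (-1/2,-1)}---this is already a contradiction.  Writing $I_2(f)=\sum_k\lambda_k(Z_k^2-1)$ via the spectral decomposition of the Hilbert--Schmidt operator associated to $f$, a Gaussian element of $H_2$ must have $\sum_k\lambda_k^m=0$ for every $m\ge 3$, which forces $\lambda_k\equiv 0$ and hence $Y(t)=0$ a.s., contradicting $\Var[Y(t)]=t>0$ for $t>0$.

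In the remaining cases (Theorem \ref{Thm:main edge}; Theorem \ref{Thm:main corner (-1/2,-1)} with $\rho\in(0,1]$; Theorem \ref{Thm:main corner (-1/2,-1/2)}) this obstruction vanishes, since the limit laws can all be realised by nonzero $H_2$ elements (for instance $WZ\stackrel{d}{=}\tfrac12(Z_1^2-Z_2^2)$).  I would instead exhibit a specific sequence along which $\{Z_{\gamma_1^{(n)},\gamma_2^{(n)}}(1)\}$ fails to be Cauchy in $L^2(\Omega)$, ruling out the hypothesised $L^2$ limit.  The It\^o isometry gives
\[
\E\!\left[Z_{\gamma_1^{(n)},\gamma_2^{(n)}}(1)\,Z_{\gamma_1^{(m)},\gamma_2^{(m)}}(1)\right]=2\langle f_n,f_m\rangle_{L^2(\mathbb{R}^2)},
\]
and this cross-covariance reduces, via the integration formula (\ref{eq:int formula}) and the beta asymptotics of Lemma \ref{Lem:beta asymp}, to an explicit expression.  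For instance on the edge $e_1$, writing $u_n=-2\gamma_1^{(n)}-1\to 0^+$, a calculation parallel to the $m=2$ case of the proof of Theorem \ref{Thm:main edge} yields
\[
\E\!\left[Z_{\gamma_1^{(n)},\gamma_2^{(n)}}(1)\,Z_{\gamma_1^{(m)},\gamma_2^{(m)}}(1)\right]\;\sim\;\frac{2\sqrt{u_n u_m}}{u_n+u_m},
\]
which equals $1$ when $u_n=u_m$ (the variance) but tends to $0$ as soon as $u_n/u_m\to 0$ or $\infty$.  Choosing $\gamma_1^{(n)}$ so that $u_n$ oscillates between two distinct rates (for example $u_n=n^{-1}$ along even $n$ and $u_n=n^{-2}$ along odd $n$, with $\gamma_2^{(n)}\equiv\gamma$) produces subsequences along which the cross-covariance tends to $0$ rather than to $\Var[Y(1)]=1$, violating the required Cauchy property.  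An analogous construction at the corners---approach along a fixed straight line, so that the weak limit of Theorem \ref{Thm:main corner (-1/2,-1)} or \ref{Thm:main corner (-1/2,-1/2)} is unambiguously identified, but with the distance to the corner oscillating between two scales---handles the two corner cases.

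The main obstacle is the explicit mixed-index cross-covariance asymptotic displayed above and its analogues at each corner.  Structurally this is the $m=2$ version of the cumulant computations in Section \ref{sec:proof} but with two distinct exponent pairs $(\gamma_1^{(n)},\gamma_2^{(n)})$ and $(\gamma_1^{(m)},\gamma_2^{(m)})$; the bookkeeping of which beta factors blow up, and at what rate, must be redone for each of the four boundary regimes.  Each case nevertheless parallels the corresponding main-theorem proof closely enough that no fundamentally new analytic tool is required.
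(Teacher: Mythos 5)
Your proposal is correct and follows essentially the same route as the paper: for the Gaussian limits you invoke the $L^2$-closedness of the second Wiener chaos, and for the non-Gaussian limits you rule out the Cauchy property by computing the cross-covariance of two generalized Rosenblatt variables with different parameters and showing it stays bounded away from $1$ (your asymptotic $2\sqrt{u_nu_m}/(u_n+u_m)$ is exactly the paper's $2r^{1/2}/(1+r)$ with $r=u_n/u_m$). The only cosmetic difference is that the paper disposes of the corner $(-1/2,-1)$ with $\rho<1$ via the Gaussian-component argument rather than the Cauchy argument, but your version works there too.
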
  
\begin{Rem}
In fact, it suffices to show that the convergence cannot be extended to convergence in $L^2(\Omega)$. This is because,
on a fixed order Wiener chaos, convergence in $L^2(\Omega)$ and convergence in probability are equivalent. See \citet{schreiber:1969:fermeture}. Alternatively, to verify the equivalence, suppose that $X_n$ is a sequence on a fixed order Wiener chaos, and $X_n$ converges in probability to $X$. The sequence is therefore tight. Then by, e.g, Lemma 2.1(ii) of \citet{nourdin:rosinski:2012:asymptotic}, $\sup_n\E |X_n|^{p}<\infty$ for any $p>0$, which entails uniform integrability and hence convergence in $L^2(\Omega)$.
\end{Rem}
To prove Theorem \ref{Thm:no L^2}, it suffices to show that any sequence of
\[
Z_{\gamma_1,\gamma_2}:=Z_{\gamma_1,\gamma_2}(1)
\]
as $(\gamma_1,\gamma_2)$ approach the boundaries is \emph{not} a Cauchy sequence in $L^2(\Omega)$. 
Let $(\alpha_1,\alpha_2)$ and $(\gamma_1,\gamma_2)$ be in the region  $\mathbf{\Delta}$ in (\ref{eq:region}). Then since $Z_{\gamma_1,\gamma_2}$ is standardized, we have
\begin{equation}\label{eq:L^2 square distance}
\E \left(Z_{\alpha_1,\alpha_2}-Z_{\gamma_1,\gamma_2}\right)^2
=2-2\E Z_{\alpha_1,\alpha_2} Z_{\gamma_1,\gamma_2}.
\end{equation}
If $(\alpha_1,\alpha_2)$ and $(\gamma_1,\gamma_2)$ converge to the same point on the boundary, we may  expect that $\E Z_{\alpha_1,\alpha_2} Z_{\gamma_1,\gamma_2} \rightarrow 1$ and hence $\E \left(Z_{\alpha_1,\alpha_2}-Z_{\gamma_1,\gamma_2}\right)^2\rightarrow 0$, which would prove Cauchy convergence. We will show, however, that 
\begin{equation}\label{eq:liminf < 1}
\liminf_{(\alpha_1,\alpha_2),(\gamma_1,\gamma_2)\rightarrow \text{ boundary point }} \E Z_{\alpha_1,\alpha_2} Z_{\gamma_1,\gamma_2}<1.
\end{equation}
In other words, we will show that there is no $L^2(\Omega)$ continuity at the boundary.

First we compute the covariance in (\ref{eq:L^2 square distance}).
\begin{Lem}
\begin{align}\label{eq:cov bw para}
\E Z_{\alpha_1,\alpha_2} Z_{\gamma_1,\gamma_2}
=&A(\alpha_1,\alpha_2)A(\gamma_1,\gamma_2) (\alpha_1+\alpha_2+\gamma_1+\gamma_2+3)^{-1}(\alpha_1+\alpha_2+\gamma_1+\gamma_2+4)^{-1}\notag\\
 \times\big[&\B(\alpha_1+1,-\alpha_1-\gamma_1-1) \B(\alpha_2+1,-\alpha_2-\gamma_2-1)\notag\\+&\B(\gamma_1+1,-\alpha_1-\gamma_1-1)\B(\gamma_2+1,-\alpha_2-\gamma_2-1)\notag\\
   +&\B(\alpha_2+1,-\alpha_2-\gamma_1-1) \B(\alpha_1+1,-\alpha_1-\gamma_2-1)\notag\\+&\B(\gamma_1+1,-\alpha_2-\gamma_1-1)\B(\gamma_2+1,-\alpha_1-\gamma_2-1)\big].
\end{align}
\begin{proof}
We shall use the representation (\ref{eq:gen Rosenblatt proc}) of $Z_{\gamma_1,\gamma_2}(t)$ in order to apply the formula 
\[
\E I_2(f) I_2(g)=2\langle f,g \rangle_{L^2(\mathbb{R}^2)}
\]
for symmetric functions $f$ and $g$ (see (7.3.39) of \citet{peccati:taqqu:2011:wiener}). Using (\ref{eq:int formula}), we get
\begin{align*}
 2 A (\alpha_1, \alpha_2&)^{-1}A(\gamma_1,\gamma_2)^{-1}  \E Z_{\alpha_1,\alpha_2} Z_{\gamma_1,\gamma_2}\\
=&\int_{[0,1]^2}d\mathbf{s} \int_{\mathbb{R}^2}d\mathbf{x} \left[(s_1-x_1)_+^{\alpha_1}(s_1-x_2)_+^{\alpha_2}+(s_1-x_1)_+^{\alpha_2}(s_1-x_2)_+^{\alpha_1}\right] \\
&\qquad \qquad\quad~~  \times \left[(s_2-x_1)_+^{\gamma_1}(s_2-x_2)_+^{\gamma_2}+(s_2-x_1)_+^{\gamma_2}(s_2-x_2)_+^{\gamma_1}\right] \\
=& 2 \int_{[0,1]^2}d\mathbf{s}\Big[ (s_2-s_1)_+^{\alpha_1+\alpha_2+\gamma_1+\gamma_2+2}\B(\alpha_1+1,-\alpha_1-\gamma_1-1) \B(\alpha_2+1,\alpha_2-\gamma_2-1)\\
&\qquad ~\quad  +(s_1-s_2)_+^{\alpha_1+\alpha_2+\gamma_1+\gamma_2+2}\B(\gamma_1+1,-\alpha_1-\gamma_1-1)\B(\gamma_2+1,-\alpha_2-\gamma_2-1)\\
&\qquad ~\quad + (s_2-s_1)_+^{\alpha_1+\alpha_2+\gamma_1+\gamma_2+2}\B(\alpha_2+1,-\alpha_2-\gamma_1-1)\B(\alpha_1+1,-\alpha_1-\gamma_2-1)\\
&\qquad ~\quad +(s_1-s_2)_+^{\alpha_1+\alpha_2+\gamma_1+\gamma_2+2}\B(\gamma_1+1,-\alpha_2-\gamma_1-1)\B(\gamma_2+1,-\alpha_2-\gamma_2-1)\Big]
\end{align*}
Since  $\alpha_1+\alpha_2>-3/2$ and $\gamma_1+\gamma_2>-3/2$, we have $\alpha_1+\alpha_2+\gamma_1+\gamma_2+2>-1$. Since 
\[ 
\int_{[0,1]^2}(s_1-s_2)_+^{u}d\mathbf{s}=\int_{[0,1]^2} (s_2-s_1)_+^{u}d\mathbf{s}= (u+1)^{-1}(u+2)^{-1} \] 
for $u>-1$, we  get (\ref{eq:cov bw para}).
\end{proof}

\bigskip
\begin{proof}[Proof of Theorem \ref{Thm:no L^2}]
\quad \\

\emph{Case of Theorem \ref{Thm:main diag}.}
By (\ref{eq:boundary two}), an element of the second chaos converges in distribution to a Gaussian. That this cannot be extended to convergence in $L^2(\Omega)$ follows from the fact  that
 $\{I_2(f): f\in L^2(\mathbb{R}^2)\}$ is a closed subspace in $L^2(\Omega)$.  Hence the $L^2(\Omega)$ limit of a double Wiener-It\^o integral must still be a double Wiener-It\^o integral, which means that it cannot be Gaussian. 

\medskip
\emph{Case of Theorem \ref{Thm:main edge}.}
Let $(\alpha_1,\alpha_2)\rightarrow (-1/2,\gamma)$ and $(\gamma_1,\gamma_2)\rightarrow (-1/2,\gamma)$, where $\gamma\in (-1,-1/2).$  Assume in addition that the convergence speeds are comparable, that is,  $(\alpha_1+1/2)/(\gamma_1+1/2)\sim r \in (0,1)$. Then using (\ref{eq:A(gamma)^m}) with $m=1$, Lemma \ref{Lem:beta asymp}, and (\ref{eq:cov bw para}),  one has
\begin{align*}
\E Z_{\alpha_1,\alpha_2} Z_{\gamma_1,\gamma_2}\sim&  (-2\alpha_1-1)^{1/2} (-2\gamma_1-1)^{1/2} (2\gamma+3)(\gamma+1)  \B(\gamma+1,-2\gamma-1)^{-1} \\
&\times (2+2\gamma)^{-1}(3+2\gamma)^{-1} \big[2\B(\gamma+1,-2\gamma-1)(-\alpha_1-\gamma_1-1)^{-1}\big]\\
\sim& \frac{(-2\alpha_1-1)^{1/2}(-2\gamma_1-1)^{1/2}}{(-\alpha_1-\gamma_1-1)}\sim 2r^{1/2}/(1+r)<1.
\end{align*}
\medskip

\emph{Case of Theorem \ref{Thm:main corner (-1/2,-1)}.}
When $\rho<1$, the limit in (\ref{eq:X_rho}) involves a Gaussian component, which by the same reason as in ``Case of Theorem \ref{Thm:main diag}'' implies that $L^2(\Omega)$ convergence cannot hold. We only need to consider the case $\rho=1$.

We therefore suppose that  $(\alpha_1,\alpha_2)\rightarrow (-1/2,-1)$ and $(\gamma_1,\gamma_2)\rightarrow (-1/2,-1)$ and that $\rho=1$, that is by (\ref{eq:rho limit corner 1}), that $(\alpha_1+1/2)/(\alpha_2+1)\rightarrow 0$ and $ (\gamma_1+1/2)/(\gamma_2+1)\rightarrow 0$. Assume in addition that $(\alpha_1+1/2)/(\gamma_1+1/2)\sim (\alpha_2+1)/(\gamma_2+1)\sim r \in (0,1)$. By (\ref{eq:corner 1 A}) with $m=1$,  Lemma \ref{Lem:beta asymp}, and (\ref{eq:cov bw para}), we have
\begin{align*}
\E Z_{\alpha_1,\alpha_2} Z_{\gamma_1,\gamma_2}\sim&(\alpha_1+\alpha_2+3/2)^{1/2} (-2\alpha_1-1)^{1/2} (\alpha_2+1)^{1/2}(\gamma_1+\gamma_2+3/2)^{1/2} (-2\gamma_1-1)^{1/2} (\gamma_2+1)^{1/2} \\
&\times(\alpha_1+\alpha_2+\gamma_1+\gamma_2+3)^{-1}(-\alpha_1-\gamma_1-1)^{-1}[(\alpha_2+1)^{-1}+(\gamma_2+1)^{-1}]\\
\sim& \frac{(\alpha_2+1)(-2\alpha_1-1)^{1/2}(\gamma_2+1)(-2\gamma_1-1)^{1/2}}{(\alpha_2+1+\gamma_2+1)(-\alpha_1-\gamma_1-1)}[(\alpha_2+1)^{-1}+(\gamma_2+1)^{-1}] 
\\ 
\sim& 2r^{1/2}/(r+1)<1.
\end{align*}

\emph{Case of Theorem \ref{Thm:main corner (-1/2,-1/2)}.}
Suppose that  $(\alpha_1,\alpha_2)\rightarrow (-1/2,-1/2)$ and $(\gamma_1,\gamma_2)\rightarrow (-1/2,-1/2)$ and that $(\alpha_1+1/2)/(\alpha_2+1/2)\sim  (\gamma_1+1/2)/(\gamma_2+1/2) \sim \rho$,  where $\rho\in [0,1]$. Assume in addition that $(\alpha_1+1/2)/(\gamma_1+1/2)\sim (\alpha_2+1/2)/(\gamma_2+1/2)\sim  r\in (0,1)$. We apply (\ref{eq:corner 2 A}) with $m=1$, (\ref{eq:cov bw para}) and  Lemma \ref{Lem:beta asymp}. In this case,  all beta functions in (\ref{eq:cov bw para}) blow up and we get
\begin{align*}
\E Z_{\alpha_1,\alpha_2} Z_{\gamma_1,\gamma_2}\sim&  \left[(-\alpha_1-\alpha_2-1)^{-2}+ (-2\alpha_1-1)^{-1}(-2\alpha_2-1)^{-1} \right]^{-1/2}  \\
& \times \left[(-\gamma_1-\gamma_2-1)^{-2}+ (-2\gamma_1-1)^{-1}(-2\gamma_2-1)^{-1} \right]^{-1/2} \times \frac{1}{2}
 \\&  \times \big[2(-\alpha_1-\gamma_1-1)^{-1} (-\alpha_2-\gamma_2-1)^{-1} + 2(-\alpha_2-\gamma_1-1)^{-1}(-\alpha_1-\gamma_2-1)^{-1}\big]
\\\sim& \frac{4r}{(r+1)^2}\left(\frac{(r+\rho)(1+r\rho)+(r+1)^2\rho}{(1+\rho)^2+4\rho }\right) \frac{(1+\rho)^2}{(r+\rho)(1+r\rho)},
\end{align*}
which is   close to zero if $r$ is small. Thus (\ref{eq:liminf < 1}) holds.
\end{proof}
\end{Lem}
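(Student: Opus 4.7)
The plan is to rule out $L^2(\Omega)$ convergence — which, by the remark following the theorem, suffices also to rule out convergence in probability. The argument splits according to whether the weak limit has a first-chaos (Gaussian) component. The structural fact used throughout is that each $Z_{\gamma_1,\gamma_2}(1)$ belongs to the second Wiener chaos, which is a closed subspace of $L^2(\Omega)$.

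For the ``Gaussian-limit'' cases — Theorem \ref{Thm:main diag}, and Theorem \ref{Thm:main corner (-1/2,-1)} when $\rho<1$ — the limit process has a nontrivial first-chaos component. Since the second chaos is $L^2$-closed and orthogonal to the first chaos, any $L^2(\Omega)$-limit of second-chaos elements cannot have a nonzero first-chaos component; this rules out $L^2(\Omega)$ convergence for free. In the remaining cases — Theorem \ref{Thm:main edge}, Theorem \ref{Thm:main corner (-1/2,-1)} with $\rho=1$, and Theorem \ref{Thm:main corner (-1/2,-1/2)} — the limits themselves lie in the second chaos, so chaos-closure is inconclusive. Instead I would show that $\{Z_{\gamma_1,\gamma_2}(1)\}$ fails to be Cauchy in $L^2(\Omega)$ as $(\gamma_1,\gamma_2)$ approaches the boundary. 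Since the marginals are standardized, $\E(Z_{\alpha_1,\alpha_2}(1)-Z_{\gamma_1,\gamma_2}(1))^2 = 2 - 2\,\E Z_{\alpha_1,\alpha_2}(1)Z_{\gamma_1,\gamma_2}(1)$, so it suffices to exhibit two sequences of parameters approaching the same boundary point along which the covariance has a limit strictly less than $1$.

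The covariance I would obtain in closed form by writing each $Z$ as a double Wiener-It\^o integral with symmetrized kernel, invoking the isometry $\E I_2(f)I_2(g) = 2\langle f,g\rangle_{L^2(\mathbb{R}^2)}$, expanding the four cross terms coming from the two symmetrizations, evaluating the $x$-integrals via the beta identity (\ref{eq:int formula}), and doing the $s$-integrals over $[0,1]^2$ using $\int_{[0,1]^2}(s_i-s_j)^u_+ d\mathbf{s} = [(u+1)(u+2)]^{-1}$. Then, selecting approaching sequences with comparable speeds but coordinate ratios parameterized by a number $r\in(0,1)$, I would use the asymptotic $\alpha \B(\alpha,\beta) \to 1$ (Lemma \ref{Lem:beta asymp}) to extract an explicit limit. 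The main obstacle is the beta-function bookkeeping in this last step: in each case, different subsets of the beta functions blow up at different rates, and these divergences must be balanced against the rates of the $A(\cdot)$ normalizations. I expect the limiting covariance to be of order $2r^{1/2}/(1+r)$ in the edge case, and an analogous explicit rational function of $(r,\rho)$ in the corner cases — each bounded strictly away from $1$ for appropriate $r$, which is exactly what is needed.
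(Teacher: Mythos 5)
Your proposal matches the paper's proof in both structure and detail: the same chaos-closure argument disposes of the cases where the limit has a Gaussian component (Theorem \ref{Thm:main diag} and Theorem \ref{Thm:main corner (-1/2,-1)} with $\rho<1$), and the same Cauchy-failure argument via the explicit covariance formula — derived exactly as you describe from the isometry $\E I_2(f)I_2(g)=2\langle f,g\rangle$, the beta identity (\ref{eq:int formula}), and the $s$-integrals — handles the remaining cases, with the limiting covariance $2r^{1/2}/(1+r)<1$ you predict for the edge case appearing verbatim in the paper. No gaps; this is essentially the paper's own argument.
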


%\begin{Rem}
%The results of this subsection conveys the following message: by relying on convergence of cumulants alone, one cannot distinguish between weak convergence and $L^2$ convergence for random variables on the second Wiener chaos. The form of the kernel function matters. 
%
%For example, if one takes $g_n(x)=I\{(-1,0)\times (0,1)\}$. Then 
%\[
%G_n:=\int_{\mathbb{R}^2}' g_n(x_1,x_2)B(dx_1)B(dx_2)=[B(1)-B(0)]\times [B(0)-B(-1)]\overset{d}{=}W B,
%\]
% where $W$ and $B$ are two 
%independent standard normal random variables. By Theorem \ref{Thm:main edge}, we have an example of $f_n\in L^2(\mathbb{R})$, such that 
%\[F_n:=\int_{\mathbb{R}^2}' f_n(x_1,x_2)B(dx_1)B(dx_2) \ConvD W B
%\]
%as $n\rightarrow\infty$, but $F_n$ does \emph{not} converge in $L^2$. In both cases, the cumulants  converge to the cumulants of $WB$. But  $G_n$ converges in $L^2$ while $F_n$ converges only weakly. 
%
%Similar examples  can  be created from Theorem \ref{Thm:main corner (-1/2,-1/2)}, where the limit is a linear combination of independent centered chi squares.
%\end{Rem}

\section{Convergence rate of marginal distribution on the boundaries}\label{sec:add 2}
Rates of convergence  of the marginal distribution of multiple Wiener-It\^o integrals are available when the limit is Gaussian or is a product of independent Gaussians. We can thus apply these rates when converging to the boundaries of the triangle, with some corners excluded.

First we consider the  convergence rate of the marginal distribution in the case of Theorem \ref{Thm:main diag} and  \ref{Thm:main corner (-1/2,-1)} and the limit being Gaussian. 
We use  the notation $A\asymp B$, where $A$ and $B$ are two nonnegative quantities, to denote that there exist constants $c<C$ independent of $A$ and $B$ such that $cB\le A \le C B$.  Let $d_{TV}(X,Y)$ denote the total variation distance between the distributions of random variables $X$ and $Y$, namely
\[
d_{TV}(X,Y)=\sup_{S \in \mathcal{B}(\mathbb{R})}|P(X\in S)-P(Y\in S)|,
\]
where $\mathcal{B}(\mathbb{R})$ denotes the Borel sets on $\mathbb{R}$.

In \citet{nourdin:peccati:2013:optimal} Theorem 1.2, the following result was established:
\begin{Lem}\label{Lem:nourdin}
Let  $\{F_{\gamma}: \gamma\in G\subset \mathbb{R}^k\}$ be a family of random variables defined on a fixed-order Wiener chaos satisfying $\E F_\gamma^2=1$, where $G$ is an open set of indices. Suppose that the third cumulant  $\kappa_3(F_\gamma)$ and the fourth cumulant $\kappa_4(F_\gamma)$ converge uniformly to zero as $\gamma\in G$ approaches a set $E\subset \overline{G}$ (as the distance between the point $\gamma$ and the set $E$ converges to zero).  
Then there exits a neighborhood $\mathcal{N}(E)$ of  $E$ in $\mathbb{R}^k$, such that when $\gamma\in \mathcal{N}(E) \cap G $, we have  
\begin{equation}\label{eq:d tv}
d_{TV}(F_\gamma,N)\asymp M(F_\gamma) ,
\end{equation}
where $N$ is a standard normal random variable and
\begin{equation}\label{eq:max}
M(F_\gamma)= \max\left(|\E F_\gamma^3|,|\E F_\gamma^4-3| \right)=\max\left(|\kappa_3(F_\gamma)|,|\kappa_4(F_\gamma)| \right).
\end{equation}
\end{Lem}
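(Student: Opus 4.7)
The plan is to prove this by combining Stein's method with Malliavin calculus, following the Nourdin--Peccati approach. The upper and lower bounds in \eqref{eq:d tv} are proved separately and require very different techniques.

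For the upper bound $d_{TV}(F_\gamma, N) \le C\, M(F_\gamma)$, I would start from the Stein characterization of the standard normal together with the Malliavin integration by parts formula. If $F$ lives in the $q$-th Wiener chaos with $\E F^2 = 1$, then $-DL^{-1}F = DF/q$ (where $L$ is the Ornstein--Uhlenbeck generator), and one obtains the Stein-type bound
\begin{equation*}
d_{TV}(F, N) \le 2\, \E\bigl| 1 - q^{-1}\|DF\|_H^2 \bigr|.
\end{equation*}
The classical route gives $\E\bigl|1-q^{-1}\|DF\|_H^2\bigr|^2 \le c_q\, \kappa_4(F)$, yielding $d_{TV}(F,N) \le C\sqrt{\kappa_4(F)}$. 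The essential refinement is to avoid the square root: one has to bound $\E|1 - q^{-1}\|DF\|_H^2|$ (not its square) by a linear combination of $|\kappa_3(F)|$ and $|\kappa_4(F)|$. This is done by expanding $q^{-1}\|DF\|_H^2$ into contraction kernels, separating the mean-one term, and exploiting precise identities relating these contractions to third and fourth cumulants on a fixed chaos. The uniformity in $\gamma$ then comes from the fact that all constants depend only on the chaos order $q$.

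For the lower bound $c\, M(F_\gamma) \le d_{TV}(F_\gamma, N)$, I would use hypercontractivity. Since each $F_\gamma$ lives on a fixed-order Wiener chaos and is normalized, all its moments $\E|F_\gamma|^p$ are uniformly bounded by a constant depending only on $p$ and $q$. Standard truncation arguments then upgrade the total variation distance to control of polynomial expectations: for each integer $k$,
\begin{equation*}
\bigl|\E F_\gamma^k - \E N^k\bigr| \le C_k \bigl(d_{TV}(F_\gamma, N)\bigr)^{1-\delta}
\end{equation*}
for any $\delta>0$, and in fact a linear bound holds with sufficient uniform moment control. Since $\E N^3 = 0$ and $\E N^4 = 3$, this yields $|\kappa_3(F_\gamma)| \le C\, d_{TV}(F_\gamma,N)$ and $|\kappa_4(F_\gamma)| \le C\, d_{TV}(F_\gamma,N)$, hence $M(F_\gamma) \le C\, d_{TV}(F_\gamma,N)$.

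The restriction to a neighborhood $\mathcal{N}(E)$ is needed precisely so that the remainders in the expansion are dominated by the main terms: one needs $M(F_\gamma)$ small enough for the linearization of $\E|1 - q^{-1}\|DF\|_H^2|$ in terms of $\kappa_3$ and $\kappa_4$ to be sharp, and for the lower-bound moment comparison to isolate $\kappa_3, \kappa_4$ from higher cumulants. I expect the main obstacle to be the upper bound, specifically the sharp linear-in-$\kappa_4$ (rather than square-root) control of $\E|1 - q^{-1}\|DF\|_H^2|$; this is where the combinatorics of contractions of symmetric kernels on Wiener chaos, and careful use of the Cauchy--Schwarz inequality at the right level, becomes delicate and is the real content of Nourdin--Peccati's optimal-rate theorem.
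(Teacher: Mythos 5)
You should first be aware that the paper does not prove this lemma: it is quoted (with the discrete sequence replaced by a family indexed by $\gamma$) from Theorem 1.2 of \citet{nourdin:peccati:2013:optimal}, and the only original content is the remark that the proof there needs only uniform convergence of $\kappa_3$ and $\kappa_4$ to zero. You are therefore attempting to reprove the optimal fourth moment theorem itself, and both halves of your sketch have genuine gaps.

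The upper bound as you propose it cannot work. The quantity $\E\bigl|1-q^{-1}\|DF\|_H^2\bigr|$ is genuinely of order $\sqrt{\kappa_4(F)}$ and not of order $\max(|\kappa_3(F)|,|\kappa_4(F)|)$: the variable $1-q^{-1}\|DF\|_H^2$ is centered and lives in a finite sum of Wiener chaoses, so by hypercontractivity its $L^1$ and $L^2$ norms are comparable, and its second moment $\Var(q^{-1}\|DF\|_H^2)$ is bounded above \emph{and below} by constant multiples of $\kappa_4(F)$ (both are comparable to the sum of squared contraction norms $\|f\otimes_r f\|^2$). Since on a fixed chaos one also has $|\kappa_3(F)|\le C\,\kappa_4(F)^{3/4}$ (this is the ``barrier'' of \citet{bierme:hermine:2012:optimal}; alternatively take $q$ odd, where $\kappa_3\equiv 0$), the target $\max(|\kappa_3|,\kappa_4)=O(\kappa_4^{3/4})$ is $o\bigl(\sqrt{\kappa_4}\bigr)$, so the linear bound you want for $\E\bigl|1-q^{-1}\|DF\|_H^2\bigr|$ is false whenever $\kappa_4\rightarrow 0$. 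The real proof must not take the absolute value at this stage: one keeps the signed expression $\E\bigl[f_g'(F)\,(1-q^{-1}\|DF\|_H^2)\bigr]$, integrates by parts once more (a second-order, Edgeworth-type expansion through the iterated gradient $\Gamma_2$), identifies the leading term as $-\tfrac12\kappa_3(F)\,\E[f_g''(F)]$, and controls the remainder by $C\kappa_4(F)$; the cancellation exploited there is exactly what $\E|1-\Gamma_1(F)|$ destroys. Your lower bound also does not close: truncating $x^3$ and $x^4$ at level $M$ and using uniform moment, or even exponential-tail, bounds on a fixed chaos yields only $|\kappa_3(F)|\le C\,d_{TV}(F,N)^{1-\delta}$, or at best $d_{TV}$ times a power of $\log(1/d_{TV})$; the linear bound you assert does not follow from uniform moment control alone. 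In the actual argument the lower bound comes from the same two-term expansion evaluated at suitably chosen \emph{bounded} test functions $g$, for which $|\E g(F)-\E g(N)|\le 2\|g\|_\infty\, d_{TV}(F,N)$ holds with no truncation loss.
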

\begin{Rem}
Though the theorem was originally stated in \citet{nourdin:peccati:2013:optimal} for a sequence $\{F_n\}$ with a discrete parameter $n$, examining the proof there one sees that for (\ref{eq:d tv}) to hold, one only needs  $\kappa_3(F_\gamma)$ and $\kappa_4(F_\gamma)$ to converge uniformly to zero, which is implied by our statement of the theorem.
\end{Rem}
\begin{Rem}\label{Rem:d_B}
Earlier in \cite{bierme:hermine:2012:optimal}, the same result (\ref{eq:d tv}) was established for the following distributional distance $d_B(\cdot,\cdot)$:
\begin{equation}\label{eq:d_B}
d_B(X,Y)=\sup_{h\in \mathcal{U}}\{|\E h(X)-\E h(Y)|\},
\end{equation}
where $\mathcal{U}$ is the class  of functions that are twice differentiable with continuous derivatives satisfying $\|h''\|_\infty<\infty$.
\end{Rem}

\begin{figure}[t,h]
\centering
\begin{tikzpicture}[scale=8]
\draw (-1/2,-1/2)-- (-1,-1/2);
\draw (-1,-1/2)-- (-1/2,-1);
\draw (-1/2,-1)-- (-1/2,-1/2);
\draw [ultra thick] (-1/2-0.07,-1+0.07)--(-1+0.07,-1/2-0.07);
\node at (-0.45, -0.45) {\small $(-\frac{1}{2},-\frac{1}{2})$};
\node at (-1/2,-1.05) {\small $(-\frac{1}{2},-1)$};
\node at (-1, -0.45) {\small $(-1,-\frac{1}{2})$};
\node at (-3/4-0.1,-3/4-0.11) {\small $\mathcal{N}(D_\epsilon)\cap \boldsymbol{\Delta}$};
\draw (-3/4-0.07,-3/4-0.07)--(-3/4+0.02,-3/4+0.02);
\node at (-3/4-0.25,-3/4+0.1) {\small $D_\epsilon$};
\draw (-3/4-0.22,-3/4+0.11)--(-3/4-0.1,-3/4+0.1);
\draw[dashed] (-0.875,-1/2-0.05)--(-1/2-0.05,-0.875);
\draw[dashed] (-1/2-0.05,-0.875)--(-1/2-0.05,-1+0.05);
\draw[dashed] (-0.875,-1/2-0.05)--(-1+0.05,-1/2-0.05);
\end{tikzpicture}
\caption{Illustration of the neighborhood $\mathcal{N}(D_\epsilon)$ of $D_\epsilon$ in Theorem \ref{Thm:diag}}\label{fig:pro diag}
\end{figure}
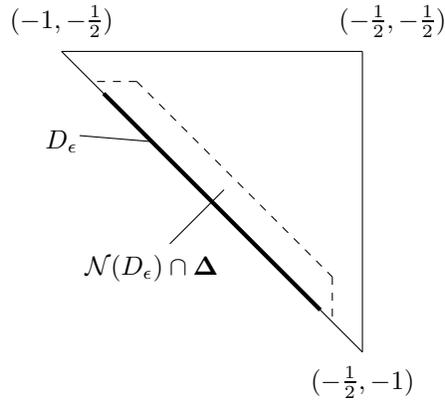

In the case of Theorem  \ref{Thm:main diag}, we considered convergence to the boundary $d$ through the neighborhood $\mathcal{N}(D_\epsilon)\cap \mathbf{\Delta}$   
illustrated in Figure \ref{fig:pro diag}.
Applying Lemma \ref{Lem:nourdin}, we get the following:
\begin{Thm}\label{Thm:diag}
Let $Z_{\gamma_1,\gamma_2}=Z_{\gamma_1,\gamma_2}(1)$, and let $N$ be a standard normal random variable. Then under the assumptions of Theorem \ref{Thm:main diag}, there exists a neighborhood  $\mathcal{N}(D_\epsilon)$ of the diagonal line segment $D_\epsilon:=\{\gamma_1+\gamma_2 +3/2=0:  \gamma_1,\gamma_2>-1+\epsilon\}$, such that when $(\gamma_1,\gamma_2)\in \mathcal{N}(D_\epsilon)\cap \boldsymbol{\Delta}$, we\footnote{Since $\boldsymbol{\Delta}$ is an open set,  $\mathcal{N}(D_\epsilon)\cap \boldsymbol{\Delta}$ does not contain the segment $D_\epsilon$.} have
\begin{equation}\label{eq:d tv goal}
d_{TV}( Z_{\gamma_1,\gamma_2} , N)\asymp (\gamma_1+\gamma_2+3/2)^{3/2}.
\end{equation}
\end{Thm}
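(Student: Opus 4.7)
The plan is to apply Lemma \ref{Lem:nourdin} with $k=2$, index set $G = \boldsymbol{\Delta}$, family $F_\gamma = Z_{\gamma_1,\gamma_2}$, and target $E = D_\epsilon$. Since $Z_{\gamma_1,\gamma_2}$ is a double Wiener--It\^o integral with $\E Z_{\gamma_1,\gamma_2}^2 = 1$ by the choice of $A(\gamma_1,\gamma_2)$ in (\ref{eq:A}), the hypotheses that concern the lemma are (i) the uniform convergence of $\kappa_3$ and $\kappa_4$ to zero as $(\gamma_1,\gamma_2) \to D_\epsilon$, and (ii) the precise asymptotics of $M(Z_{\gamma_1,\gamma_2}) = \max(|\kappa_3|,|\kappa_4|)$. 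Once both are established, the conclusion (\ref{eq:d tv goal}) is immediate from (\ref{eq:d tv}).

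First I would pin down $A(\gamma_1,\gamma_2)$. Looking at (\ref{eq:A}), the beta-function arguments remain in a fixed compact subset of $(0,\infty)$ for $(\gamma_1,\gamma_2)$ near $D_\epsilon$ subject to $\gamma_1,\gamma_2 > -1+\epsilon$, so by continuity of $\B$ the denominator is bounded away from $0$ and $\infty$. The numerator contributes the factor $(2(\gamma_1+\gamma_2)+3)^{1/2}$, so
\[
A(\gamma_1,\gamma_2) \asymp (\gamma_1+\gamma_2+3/2)^{1/2}
\]
uniformly in a neighborhood of $D_\epsilon$ inside $\boldsymbol{\Delta}$. Combining this with Proposition \ref{Pro:joint cum} (taking $n=1$, $c_1=t_1=1$) and the bound on $C_m(\gamma_1,\gamma_2;1,1)$ obtained from Corollary \ref{Cor:bound circular} exactly as in the proof of Theorem \ref{Thm:main diag}, I get the uniform upper bound
\[
|\kappa_m(Z_{\gamma_1,\gamma_2})| \;\le\; C_m\, (\gamma_1+\gamma_2+3/2)^{m/2}, \qquad m \ge 3.
\]
In particular $\kappa_3$ and $\kappa_4$ vanish uniformly over $D_\epsilon$, and $|\kappa_4| = O((\gamma_1+\gamma_2+3/2)^2)$.

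The remaining and main step is the matching lower bound $|\kappa_3| \gtrsim (\gamma_1+\gamma_2+3/2)^{3/2}$. For this I would use the explicit formula (\ref{eq:mu_3}) for $\mu_3 = \kappa_3$ (third cumulant and third moment agree since $\E Z_{\gamma_1,\gamma_2} = 0$). Dividing out $A^3$, one sees
\[
\frac{\kappa_3(Z_{\gamma_1,\gamma_2})}{A(\gamma_1,\gamma_2)^3} = \frac{2}{(\gamma_1+\gamma_2+2)(3(\gamma_1+\gamma_2)+5)} \sum_{\sigma \in \{1,2\}^3} \prod_{i=1}^{4} \B(\cdots),
\]
where each of the eight summands is a product of four beta functions whose arguments stay inside a compact subset of $(0,\infty)$ as $(\gamma_1,\gamma_2)$ ranges over a neighborhood of $D_\epsilon$ (exactly the computation needed for the upper bound). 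Every such product is therefore strictly positive and continuous, so by compactness of the closed segment $D_\epsilon$ the sum is bounded below by a positive constant; the prefactor $2/[(\gamma_1+\gamma_2+2)(3(\gamma_1+\gamma_2)+5)]$ converges to $8$. Hence $\kappa_3 / A^3$ stays bounded away from zero uniformly, which in combination with $A \asymp (\gamma_1+\gamma_2+3/2)^{1/2}$ gives $|\kappa_3| \asymp (\gamma_1+\gamma_2+3/2)^{3/2}$.

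Putting the pieces together, after shrinking the neighborhood so that $\gamma_1+\gamma_2+3/2 < 1$ (making $|\kappa_4| = O((\gamma_1+\gamma_2+3/2)^2)$ negligible compared to $|\kappa_3|$), we obtain
\[
M(Z_{\gamma_1,\gamma_2}) = \max(|\kappa_3|,|\kappa_4|) \asymp (\gamma_1+\gamma_2+3/2)^{3/2},
\]
and Lemma \ref{Lem:nourdin} delivers (\ref{eq:d tv goal}). The main obstacle is the lower bound on $|\kappa_3|$; the subtlety is verifying that in the limit $\gamma_1+\gamma_2 \to -3/2$ none of the beta arguments in (\ref{eq:mu_3}) collapses to $0$ while the others stay away from $\infty$, so that the normalized sum does not degenerate. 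This is exactly what the constraint $\gamma_1,\gamma_2 > -1+\epsilon$ (which excludes the corners) is designed to secure.
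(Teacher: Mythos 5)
Your proposal is correct and follows essentially the same route as the paper: both apply Lemma \ref{Lem:nourdin}, establish $A(\gamma_1,\gamma_2)\asymp(\gamma_1+\gamma_2+3/2)^{1/2}$ near $D_\epsilon$, and obtain $\kappa_m\asymp(\gamma_1+\gamma_2+3/2)^{m/2}$ by showing the cumulant factor $C_m$ is bounded away from $0$ and $\infty$ there (the paper via continuity and positivity of $C_m$ on a compact neighborhood, you via the explicit formula (\ref{eq:mu_3}) for $m=3$ — the same argument in substance). Your explicit check that $|\kappa_4|=O((\gamma_1+\gamma_2+3/2)^2)$ is dominated by $|\kappa_3|$ is stated only implicitly in the paper, but nothing differs in essence.
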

\begin{proof}
Since $N$ is Gaussian, we can apply Lemma  \ref{Lem:nourdin}. To do so, we need to compute the cumulants $\kappa_3$ and $\kappa_4$ which are given in Proposition \ref{Pro:joint cum}.
We examine the relation (\ref{eq:joint cum gen Ronsen}) of Proposition \ref{Pro:joint cum} with $A=A(\gamma_1,\gamma_2)$ given in (\ref{eq:A}), $m=1$, $t=1$, and $c=1$.  The factor $C_m(\gamma_1,\gamma_2,1,1)$ in (\ref{eq:C(t1,...,tk)}) is a positive continuous function with respect to $(\gamma_1,\gamma_2)$. This can be shown by the Dominated Convergence Theorem as in Lemma \ref{Lem:bound circular}. Under the assumption of Theorem \ref{Thm:main diag}, the parameter $(\gamma_1,\gamma_2)$ is restricted away from boundary. So $C_m(\gamma_1,\gamma_2,1,1)$ is bounded below away from zero and bounded above away from infinity, and so are the factors in (\ref{eq:A}) except $[2(\gamma_1+\gamma_2)+3]^{1/2}$, which goes to zero as $\gamma_1+\gamma_2\rightarrow -3/2$.
 We get
\begin{equation}\label{eq:k_m rate}
\kappa_m(Z_{\gamma_1,\gamma_2})\asymp A(\gamma_1,\gamma_2)^m \asymp (\gamma_1+\gamma_2+3/2)^{m/2}, \qquad m\ge 3.
\end{equation}
The maximum in (\ref{eq:max}) is then  $\kappa_3(F_\gamma)$.
Combining this with (\ref{eq:d tv}), we get (\ref{eq:d tv goal}).
\end{proof}

From (\ref{eq:k_m rate}) and (\ref{eq:d tv}), it is the third cumulant that determines the rate of convergence in the case of Theorem \ref{Thm:main diag}.
When $(\gamma_1,\gamma_2)$ is allowed to be close to the corner $(-1/2,-1)$, that is, in the case of Theorem \ref{Thm:main corner (-1/2,-1)} when $\rho=0$, we will show that the fourth cumulant may come into play in the rate of convergence. 
\begin{Thm}\label{Thm:corner}
Let $Z_{\gamma_1,\gamma_2}=Z_{\gamma_1,\gamma_2}(1)$, and let $N$ be a standard normal random variable. Then under the assumptions of Theorem \ref{Thm:main corner (-1/2,-1)} when $\rho=0$, that is when
\begin{equation}\label{eq:corner asymp equiv}
-\gamma_1-1/2\sim \gamma_2+1, 
\end{equation}
 there exits a neighborhood $\mathcal{N}$ of $(-1/2,-1)$, such that when $(\gamma_1,\gamma_2)\in \mathcal{N}\cap \boldsymbol{\Delta}$, we have\footnote{As before, since $\boldsymbol{\Delta}$ is an open set,  $\mathcal{N}\cap \boldsymbol{\Delta}$ does not contain the limit point $(-1/2,-1)$.}
\begin{align}\label{eq:d tv goal corner}
d_{TV}( Z_{\gamma_1,\gamma_2} , N)&\asymp (\gamma_1+\gamma_2+3/2)^{3/2}(\gamma_2+1)^{-1}\big(1+ L(\gamma_1,\gamma_2) \big), 
\end{align}
 as $(\gamma_1,\gamma_2)\rightarrow (-1/2,-1)$, where
\begin{equation}\label{eq:L(gamma)}
L(\gamma_1,\gamma_2)=\sqrt{(-\gamma_1-1/2)^{-1}-(\gamma_2+1)^{-1}}=o\left((-\gamma_1-1/2)^{-1/2}\right)\text{ \emph{or} }~o\left((\gamma_2+1)^{-1/2} \right).
\end{equation}
\end{Thm}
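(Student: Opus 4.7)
The plan is to invoke Lemma \ref{Lem:nourdin} with $F_\gamma = Z_{\gamma_1,\gamma_2}$, exactly as in the proof of Theorem \ref{Thm:diag}. Since $\kappa_2 = 1$, we have $M(F_\gamma) = \max(|\kappa_3|,|\kappa_4|)$ in (\ref{eq:max}), so the task reduces to pinning down the sharp asymptotics of the third and fourth cumulants of $Z_{\gamma_1,\gamma_2}$ as $(\gamma_1,\gamma_2)\to(-1/2,-1)$ along the direction $-\gamma_1-1/2\sim \gamma_2+1$ (the $\rho=0$ case).

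For $\kappa_3$, I would combine Proposition \ref{Pro:joint cum} (with $m=3$, $t=c=1$, $n=1$) with the factor asymptotic (\ref{eq:corner 1 A}) for $A(\gamma_1,\gamma_2)^3$ and the analysis of $C_3(\gamma_1,\gamma_2;1,1)$ carried out around (\ref{eq:odd order}) in the proof of Theorem \ref{Thm:main corner (-1/2,-1)}. The upper bound $C_3=O\big((-2\gamma_1-1)^{-1}(\gamma_2+1)^{-3}\big)$ is already given there; the matching lower bound is obtained by isolating a single $\sigma\in\{1,2\}^3$ with $\#\{j:\sigma_j=\sigma'_{j-1}=1\}=1$ (e.g.\ $\sigma=(1,2,2)$), pulling out the divergent beta factor $\B(\gamma_1+1,-2\gamma_1-1)\sim (-2\gamma_1-1)^{-1}$ together with two $(\gamma_2+1)^{-1}$ factors, and applying the Dominated Convergence Theorem to verify that the residual cyclic integral converges to a strictly positive limit. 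This yields $C_3\asymp (-2\gamma_1-1)^{-1}(\gamma_2+1)^{-3}$, hence
\[
|\kappa_3|\asymp (\gamma_1+\gamma_2+3/2)^{3/2}(-2\gamma_1-1)^{1/2}(\gamma_2+1)^{-3/2}\asymp (\gamma_1+\gamma_2+3/2)^{3/2}(\gamma_2+1)^{-1},
\]
where the last step uses $-\gamma_1-1/2\sim \gamma_2+1$. For $\kappa_4$, the even-$m$ asymptotic (\ref{eq:kappa_m asymp even corner}) with $m=4$, $n=1$, $c=t=1$ immediately gives $|\kappa_4|\asymp (\gamma_1+\gamma_2+3/2)^2(\gamma_2+1)^{-2}$.

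To recognize the right-hand side of (\ref{eq:d tv goal corner}), I would use the algebraic identity
\[
(-\gamma_1-1/2)^{-1}-(\gamma_2+1)^{-1}=\frac{\gamma_1+\gamma_2+3/2}{(-\gamma_1-1/2)(\gamma_2+1)},
\]
which under $-\gamma_1-1/2\sim \gamma_2+1$ yields $L(\gamma_1,\gamma_2)\asymp (\gamma_1+\gamma_2+3/2)^{1/2}(\gamma_2+1)^{-1}$. A direct multiplication then gives $|\kappa_3|\cdot L\asymp |\kappa_4|$, so
\[
|\kappa_3|(1+L)\asymp |\kappa_3|+|\kappa_4|\asymp \max(|\kappa_3|,|\kappa_4|),
\]
which by Lemma \ref{Lem:nourdin} is $\asymp d_{TV}(Z_{\gamma_1,\gamma_2},N)$, establishing (\ref{eq:d tv goal corner}). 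The bound $L=o\!\left((-\gamma_1-1/2)^{-1/2}\right)$ follows at once from $(\gamma_1+\gamma_2+3/2)/(\gamma_2+1)\to\rho=0$, since $L\cdot(-\gamma_1-1/2)^{1/2}\asymp\sqrt{(\gamma_1+\gamma_2+3/2)/(\gamma_2+1)}\to 0$. The uniform convergence hypotheses of Lemma \ref{Lem:nourdin} fall out of these sharp estimates automatically.

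The principal obstacle is the sharp lower bound on $C_3$: the existing analysis in the proof of Theorem \ref{Thm:main corner (-1/2,-1)} only supplies an upper bound (which would yield $d_{TV}=O(\cdot)$ rather than $d_{TV}\asymp(\cdot)$), so one must rule out cancellation among the $\sigma$-terms. The cleanest route is to exhibit a single $\sigma$ whose contribution has the correct divergence rate and a manifestly positive leading coefficient, and then absorb the remaining $\sigma$-terms as lower-order errors via the same Dominated Convergence argument used in Lemma \ref{Lem:bound circular}.
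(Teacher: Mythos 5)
Your proposal is correct and follows essentially the same route as the paper's proof: Lemma \ref{Lem:nourdin} applied with the cumulant asymptotics $\kappa_3\asymp(\gamma_1+\gamma_2+3/2)^{3/2}(\gamma_2+1)^{-1}$ and $\kappa_4\asymp(\gamma_1+\gamma_2+3/2)^{2}(\gamma_2+1)^{-2}$ extracted from (\ref{eq:corner 1 A}), (\ref{eq:odd order}) and (\ref{eq:kappa_m asymp even corner}), followed by $\max(x,y)\asymp x+y$ and the algebraic identity for $L(\gamma_1,\gamma_2)$. Your extra care about the lower bound on $C_3$ is well placed but easily discharged: with $n=1$ and $c=t=1$ every $\sigma$-term in (\ref{eq:C(t1,...,tk)}) is nonnegative, so no cancellation can occur and exhibiting a single term of the leading order suffices, exactly as you propose.
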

\begin{proof}
First in view of (\ref{eq:rho limit corner 1}) with $\rho=0$, we have
\[
V(\gamma_1,\gamma_2):=(\gamma_1+\gamma_2+3/2)^{3/2}(\gamma_2+1)^{-1}\rightarrow 0,\quad \text{as }(\gamma_1,\gamma_2)\rightarrow (-1/2,-1).
\]
By  (\ref{eq:joint cum gen Ronsen}), (\ref{eq:corner 1 A}), (\ref{eq:odd order}) with $m=3$, and (\ref{eq:corner asymp equiv}), 
we get for the third cumulant
\begin{equation}\label{eq:kappa_3 rate}
\kappa_3(Z_{\gamma_1,\gamma_2}) \asymp (-\gamma_1-1/2)^{1/2}(\gamma_1+\gamma_2+3/2)^{3/2}(\gamma_2+1)^{-3/2}\sim V(\gamma_1,\gamma_2).
\end{equation}
By (\ref{eq:kappa_m asymp even corner}) with $m=4$ and also (\ref{eq:corner asymp equiv}), we have for the fourth cumulant
\begin{align}\label{eq:kappa_4 rate}
\kappa_4(Z_{\gamma_1,\gamma_2}) &\asymp \left(\frac{\gamma_1+\gamma_2+3/2} {\gamma_2+1}\right)^2
\sim V(\gamma_1,\gamma_2) \left(\frac{\gamma_1+\gamma_2+3/2}{(-\gamma_1-1/2)(\gamma_2+1)} \right)^{1/2}= V(\gamma_1,\gamma_2) L(\gamma_1,\gamma_2).
\end{align}
Since $\max(x,y)\asymp x+y$ for $x,y\ge 0$, we get
\begin{align*}
\max\left[\kappa_3(\gamma_1,\gamma_2),\kappa_4(\gamma_1,\gamma_2)\right]
  \asymp V(\gamma_1,\gamma_2) \left[ 1+ L(\gamma_1,\gamma_2)\right].
\end{align*}
We thus apply Lemma \ref{Lem:nourdin} to get (\ref{eq:d tv goal corner}).
At last, note that (\ref{eq:corner asymp equiv}) entails that
\[
L(\gamma_1,\gamma_2)=(-\gamma_1-1/2)^{-1/2}\sqrt{1-\frac{-\gamma_1-1/2}{\gamma_2+1}}=o\left((-\gamma_1-1/2)^{-1/2}\right)\text{ or }o\left((\gamma_2+1)^{-1/2}\right).
\]
\end{proof}

\begin{Rem}
In view of Remark \ref{Rem:d_B}, Theorem \ref{Thm:diag} and \ref{Thm:corner} also hold if the distance $d_{TV}(\cdot,\cdot)$ is replaced by the distance $d_B(\cdot,\cdot)$ defined by (\ref{eq:d_B}).
\end{Rem}

\begin{Rem}
The rate of convergence to zero in (\ref{eq:d tv goal corner}) is always slower than that of (\ref{eq:d tv goal}), which is expected since the corner $(-1/2,-1)$ also belongs to the non-Gaussian boundary. 
\end{Rem}
\begin{Rem}
From (\ref{eq:kappa_3 rate}) and (\ref{eq:kappa_4 rate}), one has
\[
\frac{\kappa_4(Z_{\gamma_1,\gamma_2})}{\kappa_3(Z_{\gamma_1,\gamma_2})}\asymp 
\sqrt{(-\gamma_1-1/2)^{-1}-(\gamma_2+1)^{-1}}=L(\gamma_1,\gamma_2),
\]
which is the term (\ref{eq:L(gamma)}) appearing in (\ref{eq:d tv goal corner}). 
Note that $(-\gamma_1-1/2)^{-1}>(\gamma_2+1)^{-1}$ when $(\gamma_1,\gamma_2)\in \boldsymbol{\Delta}$.
 Therefore in the case of Theorem \ref{Thm:main corner (-1/2,-1)},   the fourth cumulant plays a role in determining the rate of convergence as follows:   if the fourth cumulant converges much slower compared with the third cumulant, that is, if  $L(\gamma_1,\gamma_2)\rightarrow \infty$, then this will slow the rate of convergence in (\ref{eq:d tv goal corner}); if $L(\gamma_1,\gamma_2)$ is asymptotically bounded, then both the third and fourth cumulants behave like $V(\gamma_1,\gamma_2)$.
\end{Rem}

Now we consider the marginal convergence rate in the case of Theorem \ref{Thm:main edge} (see Figure \ref{fig:thm edge}). This theorem involves a non-Gaussian limit. For two random variables $X$ and $Y$ we define the Wasserstein distance between their distributions to be
\[
d_W(X,Y)=\sup_{h\in \mathcal{L}}\{|\E h(X)-\E h(Y)|\},
\]
where $\mathcal{L}$ is the class of $1$-Lipschitz functions ($h\in \mathcal{L}$ if $|h(x)-h(y)|\le |x-y|$). The following result follows from
 \citet{eichelsbacher:thale:2014:malliavin}. 
\begin{Lem}\label{Lem:var gamma}
Let $Y=Z_1Z_2$ where $Z_i$'s are two independent standard normal variables and let $F=I_2(f)$ be an element on the second-order Wiener chaos with $\E F^2=1$.
Then there exists a constant $C>0$ such that
\begin{equation}\label{eq:d_W}
d_W(F,Y)\le C\left(1+\frac{1}{6}\kappa_3(F)^2 -\frac{1}{3}\kappa_4(F)+\frac{1}{120} \kappa_6(F)\right)^{1/2}.
\end{equation}
\end{Lem}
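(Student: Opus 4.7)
The proof plan is to invoke the Malliavin--Stein bound for variance--gamma approximation established by Eichelsbacher and Thäle, and specialize it to second-chaos elements, translating the resulting Malliavin-calculus quantities into cumulants.

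First, I would identify the target distribution. The product-normal $Y=Z_1Z_2$ is a symmetric centered variance--gamma variable with characteristic function $(1+t^2)^{-1/2}$ (Remark \ref{Rem:analytic}), so its cumulant generating function is $-\tfrac{1}{2}\log(1-t^2)=\sum_{n\ge 1}t^{2n}/(2n)$. Hence all odd cumulants of $Y$ vanish and $\kappa_{2n}(Y)=(2n-1)!$; in particular $\kappa_2(Y)=1$, $\kappa_4(Y)=6$, $\kappa_6(Y)=120$. As a consistency check, the quantity inside the square root on the right-hand side of (\ref{eq:d_W}) becomes $1+0-\tfrac{1}{3}\cdot 6+\tfrac{1}{120}\cdot 120=0$ when $F=Y$, so the bound correctly vanishes at the target.

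Second, I would apply the main variance--gamma approximation theorem of Eichelsbacher--Thäle. Their approach uses the third-order Stein characterization of the product-normal distribution (rooted in the ODE $xp''(x)+p'(x)-xp(x)=0$ satisfied by its Bessel-type density $\frac{1}{\pi}K_0(|x|)$) combined with the Malliavin integration-by-parts formula. Applied to a centered $F=I_2(f)$ with $\E F^2=1$, this gives a Wasserstein bound of the form
\[
d_W(F,Y)\ \le\ C\,\bigl(\E[\mathcal{S}(F)^2]\bigr)^{1/2},
\]
where the Stein discrepancy $\mathcal{S}(F)$ is a polynomial in $F$ and the iterated carré-du-champ operators $\Gamma_k(F)$, with coefficients determined by the cumulants of $Y$. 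The third step is to reduce $\E[\mathcal{S}(F)^2]$ to cumulants of $F$. For $F=I_2(f)$, the product formula for multiple Wiener--Itô integrals expresses $F^2$, $\Gamma_1(F)F$, $\Gamma_2(F)$, etc., in terms of multiple integrals whose kernels are iterated contractions $f\otimes_r f$ and their higher analogues. Computing the $L^2(\Omega)$ norm of $\mathcal{S}(F)$ then yields a linear combination of squared norms of these contractions. Lemma \ref{Lem:double integral cumulant} identifies these quantities with cumulants: $\kappa_3(F)=8\langle f,f\otimes_1 f\rangle$, $\kappa_4(F)=48\,\|f\otimes_1 f\|^2$, and $\kappa_6(F)$ is a fixed positive multiple of $\mathrm{Tr}(T_f^6)$, where $T_f$ is the Hilbert--Schmidt operator with kernel $f$. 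Collecting terms and absorbing numerical constants into $C$ produces exactly the combination $1+\tfrac{1}{6}\kappa_3(F)^2-\tfrac{1}{3}\kappa_4(F)+\tfrac{1}{120}\kappa_6(F)$.

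The principal obstacle is the bookkeeping in the last step: the coefficients $1/6$, $-1/3$, $1/120$ must come out precisely, not just up to a multiplicative constant, and the computation for $\kappa_6(F)$ is more delicate since one must track a sixth-order contraction. The useful check throughout is that the final combination must vanish when $F=Y$; this forces the coefficients to match $\kappa_2(Y)=1$, $\kappa_4(Y)=6$, $\kappa_6(Y)=120$, which is exactly what one observes, and makes the algebraic identification essentially rigid rather than requiring ad hoc verification.
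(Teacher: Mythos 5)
Your proposal is correct and follows essentially the same route as the paper: the paper simply identifies the law of $Z_1Z_2$ as the symmetric Variance--Gamma distribution $VG(1,0,1,0)$ (via Proposition 1.2(iii) of Gaunt) and inserts $r=1/2$, $\lambda=1$ into Theorem 5.10(b) of Eichelsbacher--Th\"ale, which already states the bound in cumulant form for second-chaos elements. The extra machinery you sketch (Stein discrepancy, contractions, identification of $\kappa_3,\kappa_4,\kappa_6$ via circular integrals) is the internal content of that cited theorem rather than something the proof needs to redo, and your vanishing check at $F=Y$ is consistent with $\kappa_4(Y)=6$, $\kappa_6(Y)=120$.
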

\begin{proof}
By Proposition 1.2(iii) of \citet{gaunt:2014:variance}, the distribution of $Z_1Z_2$ is the symmetric Variance-Gamma $VG(1,0,1,0)$, that is, $VG(2r,0,1/\lambda,0)$ with $r=1/2$ and $\lambda=1$. Inserting these values of $r$ and $\lambda$ in Theorem 5.10(b) of \citet{eichelsbacher:thale:2014:malliavin} gives (\ref{eq:d_W}).
\end{proof}

Using the preceding result, we get the following bound for the convergence rate as $(\gamma_1,\gamma_2)$ approaches the boundary $e_1$.
\begin{Thm}\label{Thm:edge}
Let $Z_{\gamma_1,\gamma_2}=Z_{\gamma_1,\gamma_2}(1)$, and let $Y=Z_1Z_2$ be as in Lemma \ref{Lem:var gamma}.  As
\[(\gamma_1,\gamma_2)\rightarrow (-1/2,\gamma), \quad -1<\gamma<-1/2,
\] we have
\begin{equation}\label{eq:d tv goal boundary}
d_{W}( Z_{\gamma_1,\gamma_2} , Y)= O\left((-\gamma_1-1/2)^{1/2}\right).
\end{equation}
\end{Thm}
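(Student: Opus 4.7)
The plan is to apply Lemma \ref{Lem:var gamma} directly with $F = Z_{\gamma_1,\gamma_2}(1)$, so the task reduces to controlling the quantity
\[
Q(\gamma_1,\gamma_2) := 1 + \tfrac{1}{6}\kappa_3(Z_{\gamma_1,\gamma_2})^2 - \tfrac{1}{3}\kappa_4(Z_{\gamma_1,\gamma_2}) + \tfrac{1}{120}\kappa_6(Z_{\gamma_1,\gamma_2})
\]
as $\gamma_1 \to -1/2$. The key cancellation I expect to exploit is that the constant terms in the Gaussian-like expression $1 - \frac{1}{3}\cdot 6 + \frac{1}{120}\cdot 120 = 1 - 2 + 1 = 0$ vanish identically, leaving only error terms.

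For the even cumulants $\kappa_4$ and $\kappa_6$, Corollary \ref{Cor:kappa} gives directly
\[
\kappa_4(Z_{\gamma_1,\gamma_2}) = 6 + O(-\gamma_1 - 1/2), \qquad \kappa_6(Z_{\gamma_1,\gamma_2}) = 120 + O(-\gamma_1 - 1/2).
\]
For the odd cumulant $\kappa_3$, I would revisit the argument of Theorem \ref{Thm:main edge}: the estimate (\ref{eq:cum odd conv}) shows that when $m$ is odd,
\[
\kappa_m(Z_{\gamma_1,\gamma_2}) = O\bigl((-\gamma_1 - 1/2)^{1/2}\bigr),
\]
so in particular $\kappa_3(Z_{\gamma_1,\gamma_2})^2 = O(-\gamma_1 - 1/2)$.

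Combining these four estimates, the leading constants cancel and we obtain $Q(\gamma_1,\gamma_2) = O(-\gamma_1 - 1/2)$, so Lemma \ref{Lem:var gamma} yields $d_W(Z_{\gamma_1,\gamma_2}, Y) \le C \, Q(\gamma_1,\gamma_2)^{1/2} = O((-\gamma_1 - 1/2)^{1/2})$, which is (\ref{eq:d tv goal boundary}). No part of this plan is a real obstacle — the only mild subtlety is confirming that the estimate of Corollary \ref{Cor:kappa} applies uniformly as $\gamma_2$ stays in a neighborhood of the fixed $\gamma \in (-1,-1/2)$, which follows by inspection of its proof since all beta-function asymptotics used there are uniform on compact subsets of $\gamma_2$ bounded away from $-1$ and $-1/2$.
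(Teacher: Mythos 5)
Your proposal is correct and follows essentially the same route as the paper: apply Lemma \ref{Lem:var gamma}, use the odd-cumulant estimate (\ref{eq:cum odd conv}) to get $\kappa_3 = O((-\gamma_1-1/2)^{1/2})$, use Corollary \ref{Cor:kappa} for $\kappa_4$ and $\kappa_6$, and exploit the cancellation $1-2+1=0$. The paper's proof is exactly this computation, so there is nothing to add.
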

\begin{proof}
Following the proof of Theorem \ref{Thm:main edge}, one has by (\ref{eq:cum odd conv}) that as $(\gamma_1,\gamma_2)\rightarrow (-1/2,\gamma)$,  
\begin{equation}\label{eq:kappa 3 e}
\kappa_3(Z_{\gamma_1,\gamma_2})=O\left((-\gamma_1-1/2)^{1/2}\right).
\end{equation}
On the other hand by (\ref{eq:cum even conv}), we have the convergence
$\kappa_m(Z_{\gamma_1,\gamma_2})\rightarrow (m-1)!$ for $m$ even. So
 $\kappa_4(Z_{\gamma_1,\gamma_2})\rightarrow 6$ and $\kappa_6(Z_{\gamma_1,\gamma_2})\rightarrow 120$, and hence 
\[1+\frac{1}{6}\kappa_3(Z_{\gamma_1,\gamma_2})^2 -\frac{1}{3}\kappa_4(Z_{\gamma_1,\gamma_2})+\frac{1}{120} \kappa_6(Z_{\gamma_1,\gamma_2})\rightarrow 1+0-2+1=0.
\]
We thus need  to study  the rate of convergence of the even-order cumulants $\kappa_4$ and $\kappa_6$. It follows from Corollary \ref{Cor:kappa} that
\begin{equation}\label{eq:kappa 4 6 e}
\kappa_4(Z_{\gamma_1,\gamma_2})=6+O\left(-\gamma_1-1/2\right),\quad\kappa_6(Z_{\gamma_1,\gamma_2})=120+O\left(-\gamma_1-1/2\right).
\end{equation}
The proof is concluded by plugging  (\ref{eq:kappa 3 e}) and (\ref{eq:kappa 4 6 e}) in (\ref{eq:d_W}).
\end{proof}

Recently \citet{arras:2016:stein} obtained the rate of convergence  when the limit  is $\sum_{i=1}^q \alpha_i X_i$ where $X_i$'s are  standardized chi-square random variables with one degree of freedom. Appying this result (Theorem 3.1 of \citet{arras:2016:stein}) to the convergence of $(\gamma_1,\gamma_2)\in \boldsymbol{\Delta}$ to the corner $(-1/2,-1/2)$  in the context of Theorem \ref{Thm:main corner (-1/2,-1/2)}, they obtained as $\gamma_1\rightarrow -1/2$ that
\[
d_W(Z_{\gamma_1,\gamma_2}, Y_\rho(1))=O((-\gamma_1-1/2)^{1/2}),
\]
where $Y_\rho(1)$ is as in Theorem \ref{Thm:main corner (-1/2,-1/2)}.
See Example 3.2 of \citet{arras:2016:stein}.
\section{A constructive proof of Theorem \ref{Thm:main edge}}\label{sec:add 3}
The method-of-moments proof of Theorem \ref{Thm:main edge} gives little intuitive insight of the convergence. Motivated by the observation made in Remark \ref{Rem:physical}, we give an alternate proof of Theorem \ref{Thm:main edge}. The proof is based on discretization which removes the singularities at $s=x_1$ and $s=x_2$ of the integrand in (\ref{eq:ori gen Ros proc}), so that one is able to interchange the integration orders between $\int_{\mathbb{R}^2}' \cdot B(dx_1)B(dx_2)$ and $\int_0^t \cdot ds$. Then one uses the triangular approximation described at the end of the proof.

The proof is based on several lemmas.  We use below the notation $(s,x)_N^\gamma$  to denote:
\begin{equation}\label{eq:s,x def}
(s,x)_N^{\gamma}:= \left(\frac{[Ns]-[Nx]+1}{N}\right)^\gamma I\{[Ns]>[Nx]\},\quad \gamma<0.
\end{equation}
Define also
\begin{equation}\label{eq:s,x useful}
[s-x]_N^{\gamma} :=\left(s-x+2/N\right)^{\gamma}I\{s>x+1/N\} \le (s,x)_N^{\gamma} \le  \left(s-x\right)^{\gamma}I\{s>x\}=(s-x)_+^\gamma.
\end{equation}
Let $Z_{\gamma_1,\gamma_2}(t)$ be as in (\ref{eq:ori gen Ros proc}), and let 
\begin{equation}\label{eq:Z_gamma^N}
Z_{\gamma_1,\gamma_2}^N(t)= A_N(\gamma_1,\gamma_2)\int_{\mathbb{R}^2}' \int_0^t (s,x_1)_N^{\gamma_1}(s,x_2)_N^{\gamma_2}ds B(dx_1)B(dx_2),
\end{equation}
where the Brownian measure $B(\cdot)$ is the same as the one defining $Z_{\gamma_1,\gamma_2}(t)$, and where $A_N(\gamma_1,\gamma_2)$ is chosen such that $\E Z_{\gamma_1,\gamma_2}^N(1)^2=1$.  
\begin{Lem}\label{Lem:discrete approx}
For any $t>0$, we have
\begin{equation}\label{eq:limsup triangle}
 \lim_{N\rightarrow\infty} \limsup_{(\gamma_1,\gamma_2)\rightarrow (-1/2,\gamma)} \E \left|Z_{\gamma_1,\gamma_2}(t)-Z_{\gamma_1,\gamma_2}^N(t)\right|^2=0.
\end{equation}
\end{Lem}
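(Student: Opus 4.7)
The plan is to use the Wiener--It\^o isometry to turn $\E|Z_{\gamma_1,\gamma_2}(t)-Z_{\gamma_1,\gamma_2}^N(t)|^2$ into an $L^2(\mathbb{R}^2)$ distance between deterministic kernels, then to analyze that distance via a Fubini expansion and beta-function asymptotics in the spirit of the proof of Theorem \ref{Thm:main edge}. I work at $t=1$, since the general case is analogous with $[0,1]$ replaced by $[0,t]$. Let $\tilde h_1$ and $\tilde h_1^N$ denote the symmetrized kernels (without normalizing constants) of $Z_{\gamma_1,\gamma_2}(1)$ and $Z_{\gamma_1,\gamma_2}^N(1)$, so that $2A^2\|\tilde h_1\|^2 = 2A_N^2\|\tilde h_1^N\|^2 = 1$ by the choice of the constants $A=A(\gamma_1,\gamma_2)$ and $A_N=A_N(\gamma_1,\gamma_2)$. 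The isometry then gives
\[
\E|Z_{\gamma_1,\gamma_2}(1)-Z_{\gamma_1,\gamma_2}^N(1)|^2 = 2\|A\tilde h_1-A_N\tilde h_1^N\|_{L^2(\mathbb{R}^2)}^2 = 2\bigl(1-2AA_N\langle\tilde h_1,\tilde h_1^N\rangle\bigr),
\]
so the lemma reduces to showing that the cross term $2AA_N\langle\tilde h_1,\tilde h_1^N\rangle$ tends to $1$ in the double-limit sense.

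By Fubini,
\[
\langle\tilde h_1,\tilde h_1^N\rangle = \tfrac{1}{2}\int_0^1\!\!\int_0^1 ds_1\,ds_2\,\bigl[I_N^{\gamma_1,\gamma_1}(s_1,s_2)I_N^{\gamma_2,\gamma_2}(s_1,s_2) + I_N^{\gamma_1,\gamma_2}(s_1,s_2)I_N^{\gamma_2,\gamma_1}(s_1,s_2)\bigr],
\]
where $I_N^{a,b}(s_1,s_2):=\int_{\mathbb{R}}(s_1-x)_+^a(s_2,x)_N^b\,dx$; analogous expansions hold for $\|\tilde h_1\|^2$ (using the continuous integral $I^{a,b}$ from \eqref{eq:int formula}) and $\|\tilde h_1^N\|^2$ (using the doubly discrete $I_{N,N}^{a,b}(s_1,s_2):=\int(s_1,x)_N^a(s_2,x)_N^b dx$). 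The factor $A^2$ vanishes like $(-2\gamma_1-1)$ (see \eqref{eq:A(gamma)^m} with $m=1$) because the ``matching'' beta factor $\B(\gamma_1+1,-2\gamma_1-1)\sim(-2\gamma_1-1)^{-1}$ dominates $\|\tilde h_1\|^2$ via Lemma \ref{Lem:beta asymp}; the same source of divergence---namely the slow tail of $(s_1-x)_+^{\gamma_1}(s_2-x)_+^{\gamma_1}$ at $x\to -\infty$, rather than the singularity $x=s$---governs $\|\tilde h_1^N\|^2$ and $\langle\tilde h_1,\tilde h_1^N\rangle$, with the same leading coefficient since the discretization introduced through $(s,x)_N^{\gamma_1}\le(s-x)_+^{\gamma_1}$ (cf.\ \eqref{eq:s,x useful}) does not affect the large-$|x|$ asymptotics. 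Consequently $A_N\sim A$ (equivalently $A_N/A\to 1$), and in the product $AA_N\langle\tilde h_1,\tilde h_1^N\rangle$ only the matching term $I_N^{\gamma_1,\gamma_1}I_N^{\gamma_2,\gamma_2}$ survives; comparing with the analogous expansion of $1=2A^2\|\tilde h_1\|^2$ shows this surviving term produces exactly $1$ in the double-limit.

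The main obstacle is quantifying the matching at subleading order: the leading $(-2\gamma_1-1)^{-1}$ divergences cancel exactly in $1-2AA_N\langle\tilde h_1,\tilde h_1^N\rangle$, so one is really controlling the $O(1)$ remainder, and it is the $N$-dependence of this remainder that must be shown to vanish as $N\to\infty$ uniformly in $\gamma_1$ near $-1/2$. Concretely one must check that after dividing by $(-2\gamma_1-1)^{-1}$, the Riemann-sum defect between $I_N^{\gamma_1,\gamma_1}(s_1,s_2)$, $I_{N,N}^{\gamma_1,\gamma_1}(s_1,s_2)$, and the continuous $|s_1-s_2|^{2\gamma_1+1}\B(\gamma_1+1,-2\gamma_1-1)$ is $o_N(1)$ uniformly in $\gamma_1$. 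The rigorous implementation splits the $x$-integration into a far tail $|x|\ge R_N$, where the Riemann sums accurately approximate the continuous beta integral because $1/N$ is small compared to the scale of variation of the integrand, and a bulk $|x|\le R_N$, where dominated convergence with the pointwise bound \eqref{eq:s,x useful} gives a vanishing contribution; the cutoff $R_N\to\infty$ is chosen slowly enough to make both pieces $o_N(1)$ uniformly in $\gamma_1$ near $-1/2$. Combining these estimates with the Fubini expansion above yields $\lim_N\limsup_\gamma\bigl(2-4AA_N\langle\tilde h_1,\tilde h_1^N\rangle\bigr)=0$, completing the proof at $t=1$ and, by the same argument applied on $[0,t]$, for any $t>0$.
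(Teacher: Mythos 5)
Your reduction via the Wiener--It\^o isometry to showing $2A A_N\langle\tilde h_1,\tilde h_1^N\rangle\to 1$ is exactly where the paper starts, and your heuristics are all correct: the divergence of the norms as $\gamma_1\to-1/2$ comes from the tail $x\to-\infty$ and not from the singularity at $x=s$, the cross-exponent term $I_N^{\gamma_1,\gamma_2}I_N^{\gamma_2,\gamma_1}$ stays bounded and is killed by $AA_N\asymp(-2\gamma_1-1)$, and $A_N\sim A$. The gap is that the step carrying the entire content of the lemma --- that the coefficient of the $(-2\gamma_1-1)^{-1}$ divergence of $\langle\tilde h_1,\tilde h_1^N\rangle$ agrees with that of $\|\tilde h_1\|^2$ and $\|\tilde h_1^N\|^2$ up to an error whose $\limsup$ over $(\gamma_1,\gamma_2)$ tends to $0$ as $N\to\infty$ --- is only announced, not proved. ``Split the $x$-integration into a far tail and a bulk with a slowly growing cutoff $R_N$'' is a plan, not an estimate: you would still need to handle the near-diagonal region $|s_1-s_2|\lesssim 1/N$ where the tail/bulk dichotomy degenerates, verify that the bulk contribution is $O(1)$ uniformly in $\gamma_1$ so that the prefactor $AA_N\to 0$ kills it, and check that the $O(1/N)$ shift in the tail perturbs the divergent coefficient only by a factor tending to $1$ in the correct order of limits. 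None of this looks unworkable, but as written the proposal asserts its conclusion at precisely the decisive point.

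The paper avoids all of this with one observation you already have in hand but do not exploit. By (\ref{eq:s,x useful}), $(s,x)_N^{\gamma}\le (s-x)_+^{\gamma}$ pointwise, and every kernel involved is nonnegative; hence $\langle\tilde h_1,\tilde h_1^N\rangle\ge\|\tilde h_1^N\|^2$, so that
\[
\E Z_{\gamma_1,\gamma_2}(1)Z^N_{\gamma_1,\gamma_2}(1)\;\ge\; 2AA_N\|\tilde h_1^N\|^2\;=\;\frac{A(\gamma_1,\gamma_2)}{A_N(\gamma_1,\gamma_2)}.
\]
Since $\E|Z_{\gamma_1,\gamma_2}(1)-Z^N_{\gamma_1,\gamma_2}(1)|^2=2-2\,\E Z_{\gamma_1,\gamma_2}(1)Z^N_{\gamma_1,\gamma_2}(1)$, the lemma thereby reduces to $\lim_N\lim_{(\gamma_1,\gamma_2)\to(-1/2,\gamma)}A/A_N=1$, which is Lemma \ref{Lem:A ratio} --- a statement about the two normalizations only. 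That ratio is in turn obtained by sandwiching $A_N^{-2}$ between $A^{-2}$ (the other inequality in (\ref{eq:s,x useful})) and an explicit lower bound whose asymptotics follow from monotone convergence; no uniform Riemann-sum-defect estimate for the cross inner product is ever needed. I recommend restructuring your argument around this monotonicity: your tail analysis is then required only for the norms, which is essentially the content of Lemma \ref{Lem:A ratio}.
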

\begin{proof}
We take for simplicity that $t=1$, while the other cases can be proved similarly.
Note that
\[
\E \left|Z_{\gamma_1,\gamma_2}(1)-Z_{\gamma_1,\gamma_2}^N(1)\right|^2
= 2-2\E Z_{\gamma_1,\gamma_2}(1)Z_{\gamma_1,\gamma_2}^N(1).
\]
So we need to show that
\begin{equation}\label{eq:dis approx goal}
\lim_{N\rightarrow\infty} \liminf_{(\gamma_1,\gamma_2)\rightarrow (-1/2,\gamma)}\E Z_{\gamma_1,\gamma_2}(1)Z_{\gamma_1,\gamma_2}^N(1)\ge 1.
\end{equation}
Indeed, using the symmetrized kernel in (\ref{eq:gen Rosenblatt proc}), we have
\begin{align}\label{eq:E Z Z^N}
\E Z_{\gamma_1,\gamma_2}(1)Z_{\gamma_1,\gamma_2}^N(1)=&\frac{1}{2}A(\gamma_1,\gamma_2)\frac{1}{2}A_N(\gamma_1,\gamma_2)2!\int_{\mathbb{R}^2}dx_1dx_2  \int_0^1\int_0^1 ds_1ds_2 \notag \\
 & \times[(s_1-x_1)_+^{\gamma_1}(s_1-x_2)_+^{\gamma_2}+(s_1-x_1)_+^{\gamma_2}(s_1-x_2)_+^{\gamma_1}]\notag\\
 &\times[(s_2,x_1)_N^{\gamma_1}(s_2,x_2)_N^{\gamma_2}+(s_2,x_1)_N^{\gamma_2}(s_2,x_2)_N^{\gamma_1}].
\end{align}
By definition,
\begin{align*}
A_N(\gamma_1,\gamma_2)^{-2}=& \frac{1}{2}\int_0^1\int_0^1 ds_1ds_2 \int_{\mathbb{R}^2}dx_1dx_2 
  [(s_1,x_1)_N^{\gamma_1}(s_1,x_2)_N^{\gamma_2}+(s_1,x_1)_N^{\gamma_2}(s_1,x_2)_N^{\gamma_1}]\\
 &\qquad \qquad\qquad\qquad\qquad ~~\times[(s_2,x_1)_N^{\gamma_1}(s_2,x_2)_N^{\gamma_2}+(s_2,x_1)_N^{\gamma_2}(s_2,x_2)_N^{\gamma_1}].
\end{align*}
Applying the second inequality of (\ref{eq:s,x useful}) to (\ref{eq:E Z Z^N}), and using the normalization $A_N(\gamma_1,\gamma_2)$, we have 
\begin{align*}
\E Z_{\gamma_1,\gamma_2}(1)Z_{\gamma_1,\gamma_2}^N(1)\ge  \frac{1}{2}A(\gamma_1,\gamma_2)A_N(\gamma_1,\gamma_2) 2 A_N(\gamma_1,\gamma_2)^{-2}=\frac{A(\gamma_1,\gamma_2)}{A_N(\gamma_1,\gamma_2)}.
\end{align*}
So (\ref{eq:dis approx goal}) follows from the next lemma.
\end{proof}
\begin{Lem}\label{Lem:A ratio}
Let the normalizations $A(\gamma_1,\gamma_2)$ and $A_N(\gamma_1,\gamma_2)$ be as in (\ref{eq:A}) and (\ref{eq:Z_gamma^N}). Then
\begin{equation}\label{eq:A/A_N}
\lim_{N\rightarrow\infty} \lim_{(\gamma_1,\gamma_2)\rightarrow (-1/2,\gamma)} \frac{A(\gamma_1,\gamma_2)}{A_N(\gamma_1,\gamma_2)}=1,
\end{equation}
where $-1<\gamma_1,\gamma_2<-1/2$.
\end{Lem}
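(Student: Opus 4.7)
The plan is to extract the leading singular behavior of both $A(\gamma_1,\gamma_2)^{-2}$ and $A_N(\gamma_1,\gamma_2)^{-2}$ as $(\gamma_1,\gamma_2)\to(-1/2,\gamma)$, show that they blow up at the same rate $(-2\gamma_1-1)^{-1}$, and then verify that the $N$-dependent coefficient in front of this singularity converges to its continuous counterpart as $N\to\infty$. This reduces the lemma to the computation of two explicit constants and a dominated-convergence argument.

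On the continuous side, (\ref{eq:A}) together with Lemma \ref{Lem:beta asymp} does the work immediately: among the four beta factors in the denominator of $A(\gamma_1,\gamma_2)^{-2}$, only $\B(\gamma_1+1,-2\gamma_1-1)\sim(-2\gamma_1-1)^{-1}$ diverges as $\gamma_1\to-1/2$, everything else being bounded away from $0$ and $\infty$. Hence
\[
A(\gamma_1,\gamma_2)^{-2}\sim C(\gamma)\,(-2\gamma_1-1)^{-1},\qquad C(\gamma)=\frac{\B(\gamma+1,-2\gamma-1)}{(\gamma+3/2)(2\gamma+2)}.
\]
For the discrete side I will expand $A_N^{-2}$ via its definition, use the $x_1\leftrightarrow x_2$ symmetry to combine like terms in the symmetrized kernel, and split off a ``diagonal'' pairing
\[
\int_{\mathbb{R}}(s_1,x_1)_N^{\gamma_1}(s_2,x_1)_N^{\gamma_1}dx_1\cdot\int_{\mathbb{R}}(s_1,x_2)_N^{\gamma_2}(s_2,x_2)_N^{\gamma_2}dx_2
\]
from a ``mixed'' pairing in which $\gamma_1$ and $\gamma_2$ are interleaved. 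Writing $k_i=[Ns_i]$ and indexing by $j=[Nx_1]$, the first factor of the diagonal pairing equals
\[
N^{-1-2\gamma_1}\sum_{m=1}^\infty(m+1)^{\gamma_1}(|k_1-k_2|+m+1)^{\gamma_1},
\]
which, by integral comparison with $\int u^{2\gamma_1}du$, is asymptotic to $(-2\gamma_1-1)^{-1}$ as $\gamma_1\to-1/2$ (the shift by $|k_1-k_2|$ contributes only an $O(1)$ correction). The second factor tends to $\int(s_1,x)_N^{\gamma}(s_2,x)_N^{\gamma}dx$, bounded for fixed $N$. Using $(s,x)_N^{\gamma_1}\le(s-x)_+^{\gamma_1}$ from (\ref{eq:s,x useful}) to dominate each factor by its continuous analogue, dominated convergence on $[0,1]^2$ gives
\[
A_N(\gamma_1,\gamma_2)^{-2}\sim C_N(\gamma)\,(-2\gamma_1-1)^{-1},\qquad C_N(\gamma)=\int_0^1\!\!\int_0^1\!\!\int_{\mathbb{R}}(s_1,x)_N^{\gamma}(s_2,x)_N^{\gamma}\,dx\,ds_1\,ds_2.
\]
The mixed pairing stays bounded because its sums acquire the exponent $\gamma_1+\gamma_2+1\to\gamma+1/2<0$, keeping them summable, and therefore contributes only a lower-order term.

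Combining the two asymptotics yields $A/A_N\to\sqrt{C_N(\gamma)/C(\gamma)}$ as $(\gamma_1,\gamma_2)\to(-1/2,\gamma)$, for each fixed $N$. Letting $N\to\infty$, the pointwise limit $(s,x)_N^{\gamma}\to(s-x)_+^{\gamma}$ (for $s\ne x$) forces $C_N(\gamma)\to\int_0^1\int_0^1|s_2-s_1|^{2\gamma+1}\B(\gamma+1,-2\gamma-1)\,ds_1\,ds_2=C(\gamma)$, again by dominated convergence with the dominator $(s-x)_+^{\gamma}$ supplied by (\ref{eq:s,x useful}). This closes (\ref{eq:A/A_N}).

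The main obstacle will be the uniformity in the two nested limits: the pointwise asymptotic $(-2\gamma_1-1)\int(s_1,x)_N^{\gamma_1}(s_2,x)_N^{\gamma_1}dx\to 1$ holds for each $N$ and each $(s_1,s_2)$, but commuting it with the integral over $(s_1,s_2)$ requires a dominating function that is uniform both in $N$ and in $\gamma_1$ near $-1/2$. The bound from (\ref{eq:s,x useful}) supplies exactly this, and after multiplication by $(-2\gamma_1-1)$ the resulting continuous dominator behaves like $|s_2-s_1|^{2\gamma_1+1}$, controllable e.g.\ by $|s_2-s_1|^{-1/2}$ once $\gamma_1>-3/4$. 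Careful verification that the mixed pairing and the remainder of the tail sum are genuinely of lower order, uniformly in $N$, is the only delicate point.
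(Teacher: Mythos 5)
Your argument follows essentially the same route as the paper's: both split $A_N(\gamma_1,\gamma_2)^{-2}$ into the same-exponent (``diagonal'') pairing and the alternating (``mixed'') pairing, identify $(-2\gamma_1-1)^{-1}$ as the singular rate carried by the $\gamma_1$--$\gamma_1$ factor, show the mixed pairing is $O(1)$ via $\int_{\mathbb{R}}(s_1-x)_+^{\gamma_1}(s_2-x)_+^{\gamma_2}dx=|s_1-s_2|^{\gamma_1+\gamma_2+1}\B(\cdot,\cdot)$ with bounded beta factors, and pass between discrete and continuous kernels through the sandwich (\ref{eq:s,x useful}). The organizational difference is that you compute the exact asymptotics $A_N^{-2}\sim C_N(\gamma)(-2\gamma_1-1)^{-1}$ and then send $C_N(\gamma)\to C(\gamma)$ by dominated convergence, whereas the paper obtains the inequality $A^2/A_N^2\le 1$ for free from $(s,x)_N^{\gamma}\le(s-x)_+^{\gamma}$ and therefore only needs a one-sided minorant $R_N$ of the diagonal pairing, handled by monotone convergence. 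Your version yields slightly more (the constant $C_N(\gamma)$ for each fixed $N$) at the cost of having to control the diagonal pairing from both sides.

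One quantitative point needs repair. You propose to dominate the $(-2\gamma_1-1)$-normalized $\gamma_1$--$\gamma_1$ factor by $|s_2-s_1|^{-1/2}$, which is fine for that factor alone once $\gamma_1>-3/4$. But multiplied by the dominator $C|s_1-s_2|^{2\gamma_2+1}$ of the $\gamma_2$--$\gamma_2$ factor, the product $C|s_1-s_2|^{2\gamma_2+1/2}$ is integrable on $[0,1]^2$ only when $\gamma_2>-3/4$, so the domination as written fails for $\gamma\in(-1,-3/4]$. The fix is immediate: either do not decouple the exponents and use the combined dominator $C|s_1-s_2|^{2\gamma_1+2\gamma_2+2}$, whose exponent stays above $-1+\epsilon$ near the limit point because $\gamma_1+\gamma_2\to\gamma-1/2>-3/2$; or, more simply, note from the discrete-sum representation and the comparison (\ref{eq:sum i 2gamma}) that $(-2\gamma_1-1)\int_{\mathbb{R}}(s_1,x)_N^{\gamma_1}(s_2,x)_N^{\gamma_1}dx\le N^{-2\gamma_1-1}$, a constant for fixed $N$, so that only the (integrable) $\gamma_2$-factor remains to be dominated. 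With that adjustment the proof is complete.
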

\begin{proof}
By the second inequality of (\ref{eq:s,x useful}), we have 
\begin{equation}\label{eq:A_N lemma 1}
A_N(\gamma_1,\gamma_2)^{-2}\le A(\gamma_1,\gamma_2)^{-2}.
\end{equation}
By the first inequality of (\ref{eq:s,x useful}), we have
\begin{align}\label{eq:A_N lemma 2}
A_N(\gamma_1,\gamma_2)^{-2}\ge & \frac{1}{2}\int_0^1\int_0^1 ds_1ds_2 \int_{\mathbb{R}^2}dx_1dx_2 
 \Big([s_1-x_1]_N^{\gamma_1}[s_1-x_2]_N^{\gamma_2}+[s_1-x_1]_N^{\gamma_2}[s_1-x_2]_N^{\gamma_1}\Big)\notag\\
 &\qquad \qquad\qquad\qquad\qquad~~\times\Big([s_2-x_1]_N^{\gamma_1}[s_2-x_2]_N^{\gamma_2}+[s_2-x_1]_N^{\gamma_2}[s_2-x_2]_N^{\gamma_1}\Big)\notag
\\=& P_N(\gamma_1,\gamma_2)+Q_N(\gamma_1,\gamma_2),
\end{align}
where
\begin{align*}
P_N(\gamma_1,\gamma_2)&= 2 \int_{0<s_1<s_2<1} ds_1ds_2   \int_\mathbb{R} [s_1-x_1]_N^{\gamma_1}[s_2-x_1]_N^{\gamma_1} dx_1  \int_{\mathbb{R}} [s_1-x_2]_N^{\gamma_2} [s_2-x_2]_N^{\gamma_2} dx_2,
\end{align*}
and
\begin{align*}
Q_N(\gamma_1,\gamma_2)=2 \int_{0<s_1<s_2<1} ds_1ds_2
\int_\mathbb{R} [s_1-x_1]_N^{\gamma_1}[s_2-x_1]_N^{\gamma_2} dx_1  \int_{\mathbb{R}} [s_1-x_2]_N^{\gamma_2} [s_2-x_2]_N^{\gamma_1} dx_2.
\end{align*}
In the integrals over $\mathbb{R}$, the exponents of $Q_N$ alternate where as those of $P_N$ are the same. 
Note that for $\alpha,\beta \in (-1,-1/2)$ and $0<s_1<s_2<1$, we have
\begin{align}
\int_\mathbb{R} [s_1-x]_N^{\alpha}[s_2-x]_N^{\beta} dx &=\int_{-\infty}^{s_1-1/N} (s_1-x+2/N)^{\alpha}  (s_2-x+2/N)^{\beta}  dx\notag\\
&= \int_0^{\infty} (u+3/N)^{\alpha} (s_2-s_1+u+3/N)^\beta du\label{eq:change of var u}\\
&\le \int_0^\infty u^{\alpha} (u+s_2-s_1)^\beta du = (s_2-s_1)^{\alpha+\beta+1} \B(\alpha+1,-\alpha-\beta-1),\notag
\end{align}
after setting $u=s_1-x-1/N$.
Thus the term $Q_N$ from (\ref{eq:A_N lemma 2}) satisfies
\begin{align}\label{eq:A_N lemma 3}
Q_N(\gamma_1,\gamma_2)\le &2 (2\gamma_1+2\gamma_2+3)^{-1} (2\gamma_1+2\gamma_2+4)^{-1}\notag \\&\times \B(\gamma_1+1,-\gamma_1-\gamma_2-1)\B(\gamma_2+1,-\gamma_1-\gamma_2-1)= O(1).
\end{align}
 as $(\gamma_1,\gamma_2)\rightarrow (-1/2,\gamma)$. The other term $P_N$ in view of (\ref{eq:s,x useful}) and (\ref{eq:change of var u})  becomes
\begin{align*}
P_N(\gamma_1,\gamma_2)=&2 \int_{0<s_1<s_2<1} ds_1ds_2 \int_{0}^{\infty} (u+3/N)^{\gamma_1} (s_2-s_1+u+3/N)^{\gamma_1} du  \notag
\\&\qquad\qquad\qquad\qquad\times \int_{0}^{\infty} (u+3/N)^{\gamma_2} (s_2-s_1+u+3/N)^{\gamma_2} du.
\end{align*}
Now in the second integral, use $(u+3/N)^{\gamma_2}\ge (s_2-s_1+u+3/N)^{\gamma_2}$, and in the third integral, replace $u$ by $u(s_2-s_1)$ and then factor $s_2-s_1$. One gets
\begin{align*}
P_N(\gamma_1,\gamma_2)\ge &2 \int_{0<s_1<s_2<1} ds_1ds_2 \int_0^\infty (s_2-s_1+u+3/N)^{2\gamma_1}du   \notag
\\ &\times(s_2-s_1)^{2\gamma_2+1}\int_0^\infty \left(u+\frac{3}{N(s_2-s_1)}\right)^{\gamma_2}\left(1+u+\frac{3}{N(s_2-s_1)}\right)^{\gamma_2}  du
\end{align*}
Since $\int_0^\infty (s_2-s_1+u+3/N)^{2\gamma_1}du=(-2\gamma_1-1)^{-1}(s_2-s_1+3/N)^{2\gamma_1+1}$, one has
\begin{align}\label{eq:A_N lemma 4}
P_N(\gamma_1,\gamma_2)\ge  & 2 (-2\gamma_1-1)^{-1} \int_{0<s_1<s_2<1}ds_1ds_2 (s_2-s_1+3/N)^{2\gamma_1+1}(s_2-s_1)^{2\gamma_2+1}\notag
 \\& \times
 \int_0^\infty \left(u+\frac{3}{N(s_2-s_1)}\right)^{\gamma_2}\left(u+\frac{3}{N(s_2-s_1)}+1\right)^{\gamma_2}  du=:R_N(\gamma_1,\gamma_2).
\end{align}
As $(\gamma_1,\gamma_2)\rightarrow (-1/2,\gamma)$, we have
\begin{align*}
(-2\gamma_1-1) R_N(\gamma_1,\gamma_2)\rightarrow& 2 \int_{0<s_1<s_2<1}ds_1ds_2 (s_2-s_1)^{2\gamma+1}\\& 
  \times \int_0^\infty \left(u+\frac{3}{N(s_2-s_1)}\right)^{\gamma}\left(u+\frac{3}{N(s_2-s_1)}+1\right)^{\gamma} du.
\end{align*}
As $N\rightarrow\infty$, by the Monotone Convergence Theorem, the right-hand side of the preceding line converges to 
\begin{align*}
&2\int_{0<s_1<s_2<1}ds_1ds_2 (s_2-s_1)^{2\gamma+1}\int_0^\infty u^{\gamma}(u+1)^{\gamma}du= (2\gamma+3)^{-1}(\gamma+1)^{-1}\B(\gamma+1,-2\gamma-1).
\end{align*}
On the other hand, from (\ref{eq:A(gamma)^m}) with $m=2$ we have
\begin{equation}\label{eq:A^2}
A(\gamma_1,\gamma_2)^2\sim (-2\gamma_1-1)(2\gamma+3)(\gamma+1)\B(\gamma+1,-2\gamma-1)^{-1}.
\end{equation}
Hence
\begin{align}\label{eq:A_N lemma 5}
\lim_{N\rightarrow\infty} \lim_{(\gamma_1,\gamma_2)\rightarrow (-1/2,\gamma)}A(\gamma_1,\gamma_2)^2 R_N(\gamma_1,\gamma_2)=1 
\end{align}
Combining  (\ref{eq:A_N lemma 2}), (\ref{eq:A_N lemma 3}), (\ref{eq:A_N lemma 4}) and (\ref{eq:A_N lemma 5}) yields 
\[
 \liminf_{ N\rightarrow\infty} \liminf_{(\gamma_1,\gamma_2)\rightarrow (-1/2,\gamma)} \frac{A(\gamma_1,\gamma_2)^2}{A_N(\gamma_1,\gamma_2)^2}\ge 1,
\]
This with (\ref{eq:A_N lemma 1}) yields (\ref{eq:A/A_N}).
\end{proof}

We will now interchange the integrals $\int_0^t \cdot ds$ and $\int_{\mathbb{R}^2}' \cdot dx_1dx_2$, and write
\begin{align}
Z_{\gamma_1,\gamma_2}^N(t)&=A_N(\gamma_1,\gamma_2) \int_{\mathbb{R}^2}' \left[ \int_0^t (s,x_1)_N^{\gamma_1} (s,x_2)_N^{\gamma_2}  B(dx_1)B(dx_2)ds\right]\notag\\
&=A_N(\gamma_1,\gamma_2) \int_0^t\left[ \int_{\mathbb{R}^2}' (s,x_1)_N^{\gamma_1} (s,x_2)_N^{\gamma_2}  B(dx_1)B(dx_2)\right] ds,\quad \text{a.s.},\label{eq:change int order alt pf}
\end{align}
by the stochastic Fubini theorem (see \citet{pipiras:taqqu:2010:regularization} Theorem 2.1). It applies since
\begin{equation}\label{eq:fubini L2}
\int_0^t \int_{\mathbb{R}^2} \left[(s,x_1)_N^{\gamma_1} (s,x_2)_N^{\gamma_2}\right]^2 dx_1dx_2 ds<\infty. 
\end{equation}
Relation (\ref{eq:fubini L2}) follows from the following lemma.
\begin{Lem}
For any $\gamma\in (-1,-1/2)$, $t>0$ and $N\in \mathbb{Z}_+$, we have 
\[
\sup_{s\in [0,t]} \int_{\mathbb{R}} (s,x)_N^{2\gamma} dx<\infty.
\]
\end{Lem}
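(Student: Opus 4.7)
The plan is to exploit the fact that, as a function of $x$, the integrand $(s,x)_N^{2\gamma}$ is piecewise constant on the uniform partition $\{I_k := [k/N,(k+1)/N)\}_{k\in\mathbb{Z}}$ of $\mathbb{R}$, since by its definition in (\ref{eq:s,x def}) it depends on $x$ only through $[Nx]$. Consequently the integral over $\mathbb{R}$ will collapse to an explicit series that I expect will turn out to be independent of $s$ and thus uniformly bounded.

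Concretely, on $I_k$ one has $[Nx]=k$, so the indicator $I\{[Ns]>[Nx]\}$ retains precisely the values $k<[Ns]$, and for such $k$ the integrand takes the constant value $\bigl(([Ns]-k+1)/N\bigr)^{2\gamma}$. Since $|I_k|=1/N$, I would obtain
\[
\int_{\mathbb{R}} (s,x)_N^{2\gamma}\,dx \;=\; \sum_{k<[Ns]} \frac{1}{N}\left(\frac{[Ns]-k+1}{N}\right)^{2\gamma}.
\]
Substituting the positive index $j=[Ns]-k\in\{1,2,3,\ldots\}$ turns this into
\[
\int_{\mathbb{R}} (s,x)_N^{2\gamma}\,dx \;=\; N^{-2\gamma-1}\sum_{j=1}^{\infty}(j+1)^{2\gamma},
\]
which is independent of $s$ and finite because $2\gamma<-1$ makes the tail a convergent $p$-series (with $p=-2\gamma>1$). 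Taking the supremum over $s\in[0,t]$ is then trivial.

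I do not expect a genuine obstacle here: the main conceptual point is recognizing that the discretization $(s,x)_N^\gamma$ replaces the singular integrand $(s-x)_+^\gamma$ by a step function whose jumps occur on a uniform grid, which reduces an integrability question near $x=s$ to the convergence of a $p$-series. As a by-product the bound does not depend on $s$, which is exactly what is needed to justify (\ref{eq:fubini L2}) and hence the application of the stochastic Fubini theorem in (\ref{eq:change int order alt pf}).
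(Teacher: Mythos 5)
Your proof is correct and follows essentially the same route as the paper's: both exploit that the integrand is constant on the intervals $[k/N,(k+1)/N)$ to collapse the integral into the series $N^{-2\gamma-1}\sum_{k\ge 2}k^{2\gamma}$, which is independent of $s$ and converges since $2\gamma<-1$.
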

\begin{proof}
In view of (\ref{eq:s,x def}), 
\begin{align*}
\int_{\mathbb{R}} (s,x)_N^{2\gamma} dx &=\frac{1}{N}\int_{\mathbb{R}} \left(\frac{[Ns]-[Nx]+1}{N}\right)^{2\gamma} I\{[Ns]>[Nx]\}~ d(Nx)
\\
&=N^{-2\gamma-1} \sum_{-\infty<i<[Ns]}\left([Ns]-i+1\right)^{2\gamma}
= N^{2\gamma-1}\sum_{k=2}^\infty k^{-2\gamma}<\infty
\end{align*}
since $\gamma<-1/2$, where we set $k=[Ns]-i+1$. Since the last expression does not depend on $s$,  the conclusion of the lemma holds.
\end{proof}
By the product formula of Wiener-It\^o integrals (see, e.g., \citet{nourdin:peccati:2012:normal} Theorem 2.7.10),
the process $Z_{\gamma_1,\gamma_2}^N(t)$ in (\ref{eq:change int order alt pf}) can be rewritten as follows:
\begin{align*}
Z_{\gamma_1,\gamma_2}^N(t)=&A_N(\gamma_1,\gamma_2)\\
\times &\int_0^t \left[\int_\mathbb{R} (s,x_1)_N^{\gamma_1} B(dx_1) \int_{\mathbb{R}} (s,x_2)_N^{\gamma_2} B(dx_2) -  \E\int_\mathbb{R} (s,x_1)_N^{\gamma_1} B(dx_1) \int_{\mathbb{R}} (s,x_2)_N^{\gamma_2} B(dx_2)\right]  ds 
\end{align*}
Note that by the scaling property of Brownian motion, for $j=1,2$, 
\begin{align*}
X_{\gamma_j}^N(s)&:=\int_\mathbb{R} (s,x)_N^{\gamma_j} B(dx)=
\int_\mathbb{R} \left(\frac{[Ns]-[Nx]+1}{N}\right)^{\gamma_j}I\{[Ns]>[Nx]\} B(dx)
\\&\overset{f.d.d.}{=} N^{-\gamma_j-1/2}  \sum_{-\infty<i<[Ns]} ([Ns]-i+1)^{\gamma_j}\epsilon_i,
\end{align*}
where $\epsilon_i$'s are i.i.d. standard normal random variables, and $\overset{f.d.d.}{=}$ means equal in finite-dimensional distributions. Hence (recall that the Hurst index $H=\gamma_1+\gamma_2+2$),
\begin{align}\label{eq:Z_gamma rewrite}
Z_{\gamma_1,\gamma_2}^N(t) &\overset{f.d.d.}{=}A_N(\gamma_1,\gamma_2)\int_0^t 、\left[X_{\gamma_1}^N(s)X_{\gamma_2}^N(s) - \E X_{\gamma_1}^N(s)X_{\gamma_2}^N(s)\right] ds\notag\\
&~= A_N(\gamma_1,\gamma_2) N^{-H} \sum_{n=1}^{[Nt]}\left[ Y_{\gamma_1}(n)Y_{\gamma_2}(n)-\E Y_{\gamma_1}(n)Y_{\gamma_2}(n)\right]+R_N(t,\gamma_1,\gamma_2)
\end{align}
where 
\begin{equation}\label{eq:Y stationary}
Y_{\gamma}(n)=  \sum_{-\infty<i<n-1} (n-i)^{\gamma}\epsilon_i= \sum_{i=2}^\infty  i^{\gamma}\epsilon_{n-i}
\end{equation}
is a linear stationary sequence and
\begin{align}\label{eq:R_N remainder}
R_N(t,\gamma_1,\gamma_2)=&A_N(\gamma_1,\gamma_2)N^{-H}\left(Nt-[Nt]\right) \notag\\&
\times \Big(Y_{\gamma_1}([Nt]+1)Y_{\gamma_2}([Nt]+1)-\E Y_{\gamma_1}([Nt]+1)Y_{\gamma_2}([Nt]+1)\Big).
\end{align}
We first show that this preceding remainder term is negligible:
\begin{Lem}\label{Lem:R go to 0}
\begin{equation}\label{eq:R go to 0}
 \lim_{N\rightarrow \infty} \limsup_{(\gamma_1,\gamma_2)\rightarrow (-1/2,\gamma)} \E R_N(t,\gamma_1,\gamma_2)^2=0
\end{equation}
\end{Lem}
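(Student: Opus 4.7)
The plan is a direct second-moment computation. First, since $(Nt-[Nt])^2 \le 1$ and $Y_\gamma$ is stationary, one has
\[
\E R_N(t,\gamma_1,\gamma_2)^2 \le A_N(\gamma_1,\gamma_2)^2 \, N^{-2H} \, \Var\bigl(Y_{\gamma_1}(1)Y_{\gamma_2}(1)\bigr),
\]
so the problem reduces to controlling $\Var(Y_{\gamma_1}(1)Y_{\gamma_2}(1))$ and combining it with the known asymptotics of $A_N(\gamma_1,\gamma_2)^2$.

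The crucial observation is that $Y_{\gamma_1}(1)$ and $Y_{\gamma_2}(1)$ are jointly centered Gaussian, since by (\ref{eq:Y stationary}) both are linear combinations of the \emph{same} i.i.d.\ standard normal sequence $\{\epsilon_i\}$. By Isserlis' (Wick's) formula, for jointly centered Gaussian $X,Y$, $\Var(XY)=\Var(X)\Var(Y)+\mathrm{Cov}(X,Y)^2$. Apply this with $X=Y_{\gamma_1}(1)$, $Y=Y_{\gamma_2}(1)$ and estimate each piece as $(\gamma_1,\gamma_2)\to(-1/2,\gamma)$:
\begin{itemize}
\item $\Var(Y_{\gamma_1}(1))=\sum_{i=2}^\infty i^{2\gamma_1}\sim (-2\gamma_1-1)^{-1}$, by comparison with $\int_1^\infty x^{2\gamma_1}dx$, since $2\gamma_1\uparrow -1$;
\item $\Var(Y_{\gamma_2}(1))=\sum_{i=2}^\infty i^{2\gamma_2}\to \sum_{i=2}^\infty i^{2\gamma}$, a finite constant, because $2\gamma<-1$;
\item $\mathrm{Cov}(Y_{\gamma_1}(1),Y_{\gamma_2}(1))=\sum_{i=2}^\infty i^{\gamma_1+\gamma_2}$ is bounded since $\gamma_1+\gamma_2\to\gamma-1/2<-1$.
\end{itemize}
Hence $\Var(Y_{\gamma_1}(1)Y_{\gamma_2}(1))=O\bigl((-2\gamma_1-1)^{-1}\bigr)$.

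On the other hand, by Lemma~\ref{Lem:A ratio} and (\ref{eq:A^2}), $A_N(\gamma_1,\gamma_2)^2\sim A(\gamma_1,\gamma_2)^2=O(-2\gamma_1-1)$. The singular factors $(-2\gamma_1-1)^{\pm 1}$ cancel, yielding
\[
\limsup_{(\gamma_1,\gamma_2)\to(-1/2,\gamma)}\E R_N(t,\gamma_1,\gamma_2)^2 \le C\, N^{-2H_\gamma},
\]
with $H_\gamma=\gamma+3/2\in(1/2,1)$. Since $2H_\gamma>1$, letting $N\to\infty$ proves (\ref{eq:R go to 0}). The only delicate point is the balance of the two singularities; once the Gaussian variance formula makes the leading divergence explicit as $(-2\gamma_1-1)^{-1}$, the normalization $A_N^2\sim A^2$ exactly compensates it, and no further cancellation or combinatorics is needed.
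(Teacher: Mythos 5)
Your proof is correct and follows essentially the same route as the paper: the same reduction via stationarity, the same Gaussian product-variance identity (the paper invokes it as the diagram formula, you as Isserlis' formula), the same identification of $\sum_{i\ge 2} i^{2\gamma_1}\sim(-2\gamma_1-1)^{-1}$ as the only divergent piece, and the same cancellation against $A_N^2\sim A^2\asymp(-2\gamma_1-1)$.
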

\begin{proof}
Since $Nt-[Nt]\le 1$ and $Y_\gamma(n)$ is stationary, we can write
\begin{align*}
\E R_N(t,\gamma_1,\gamma_2)^2 \le N^{-2H} A_N(\gamma_1,\gamma_2)^2 \left[ \E Y_{\gamma_1}(0)^2 Y_{\gamma_2}(0)^2-\left(\E Y_{\gamma_1}(0)Y_{\gamma_2}(0)\right)^2 \right].
\end{align*}
We have
\begin{equation}\label{eq:Y station var cov}
\E Y_{\gamma_1}(0)Y_{\gamma_2}(0)=\sum_{i=2}^\infty i^{\gamma_1+\gamma_2}, \quad \E Y_{\gamma_j}(0)^2= \sum_{i=2}^\infty i^{2\gamma_j},~j=1,2.
\end{equation}
By the diagram formula (see, e.g., \citet{janson:1997:gaussian} Theorem 1.36), we have for jointly centered Gaussian variables $(Y_1,Y_2)$ that $\E Y_1^2 Y_2^2=2\left(\E Y_1 Y_2\right)^2+ \E Y_1^2 \E Y_2^2$.  Expressing this as $\E Y_1^2Y_2^2-\left(\E Y_1 Y_2\right)^2=\left(\E Y_1 Y_2\right)^2+ \E Y_1^2 \E Y_2^2$,
one gets
\begin{align}\label{eq:R_N(t)^2 bound}
\E R_N(t,\gamma_1,\gamma_2)^2 \le N^{-2H}A_N(\gamma_1,\gamma_2)^2 \left[
\left(\sum_{i=2}^\infty i^{\gamma_1+\gamma_2}\right)^2 +\left(\sum_{i=2}^\infty i^{2\gamma_1}\right)\left(\sum_{i=2}^\infty i^{2\gamma_2}\right)\right].
\end{align}
The first and last sums remain bounded as $(\gamma_1,\gamma_2)\rightarrow (-1/2,\gamma)$, but this is not the case for the second sum. Since the function $x^{2\gamma_1}$ is decreasing, we have for any integer $k\ge 0$, 
\begin{align}\label{eq:sum i 2gamma} 
 &(-2\gamma_1-1)^{-1}(k+2)^{2\gamma_1+1}=\int_{2}^\infty (x+k)^{2\gamma_1} dx \le \int_{2}^\infty (x+k)^{\gamma_1} x^{\gamma_1} dx \notag\\
 &\le\sum_{i=2}^\infty (i+k)^{\gamma_1}i^{\gamma_1}\le \sum_{i=2}^\infty i^{2\gamma_1}\le \int_{1}^\infty x^{2\gamma_1} dx =(-2\gamma_1-1)^{-1}.
\end{align}  
In particular,  $\sum_{i=2}^\infty i^{2\gamma_1}$ explodes like $(-2\gamma_1-1)^{-1}$ as $\gamma_1\rightarrow -1/2$. This, however, will be compensated by $A_N(\gamma_1,\gamma_2)^2$, since by   (\ref{eq:A/A_N}) and (\ref{eq:A^2}), we have $A_N(\gamma_1,\gamma_2)\sim A(\gamma_1,\gamma_2)\asymp (-2\gamma_1-1)$ as $(\gamma_1,\gamma_2)\rightarrow (-1/2,\gamma)$. Hence (\ref{eq:R_N(t)^2 bound}) implies 
\[
\limsup_{(\gamma_1,\gamma_2)\rightarrow (-1/2,\gamma)} N^{2H} \E R_N(t,\gamma_1,\gamma_2)^2 <\infty,
\]
which entails (\ref{eq:R go to 0}).
\end{proof}

The following lemma is  key:
\begin{Lem}\label{Lem:asymp indep}
Let $Y_{\gamma}(n)$  be as in (\ref{eq:Y stationary}).
As $(\gamma_1,\gamma_2)\rightarrow (-1/2,\gamma)$, one has the following joint convergence in distribution:
\[
\Big(A(\gamma_1,\gamma_2)Y_{\gamma_1}(n), Y_{\gamma_2}(n) \Big)_{n=1}^{N} \ConvD \Big(\sigma_{\gamma}W, Y_{\gamma}(n) \Big)_{n=1}^N, 
\]
for any $N\in \mathbb{Z}_+$,
where $W$ is a standard normal random variable which is independent of $Y_{\gamma}(n)$, and 
\begin{equation}\label{eq:sigma_gamma}
\sigma_\gamma=(2\gamma+3)^{1/2}(\gamma+1)^{1/2}\B(\gamma+1,-2\gamma-1)^{-1/2}.
\end{equation}
\end{Lem}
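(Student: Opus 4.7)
The vector $\bigl(A(\gamma_1,\gamma_2)Y_{\gamma_1}(n),\, Y_{\gamma_2}(n)\bigr)_{n=1}^{N}$ is jointly Gaussian with mean zero, because both $Y_{\gamma_1}(n)$ and $Y_{\gamma_2}(n)$ are (deterministic) linear combinations of the same i.i.d.\ standard normals $\{\epsilon_i\}_{i\in\mathbb{Z}}$ and $A(\gamma_1,\gamma_2)$ is a deterministic constant. Hence joint convergence in distribution reduces to convergence of every covariance entry. The plan is therefore to verify the following three asymptotic relations as $(\gamma_1,\gamma_2)\to(-1/2,\gamma)$, for arbitrary integers $n,m\in\{1,\ldots,N\}$:
\begin{align*}
\text{(a)}\quad & A(\gamma_1,\gamma_2)^2\,\mathrm{Cov}\bigl(Y_{\gamma_1}(n),Y_{\gamma_1}(m)\bigr)\ \to\ \sigma_\gamma^2,\\
\text{(b)}\quad & A(\gamma_1,\gamma_2)\,\mathrm{Cov}\bigl(Y_{\gamma_1}(n),Y_{\gamma_2}(m)\bigr)\ \to\ 0,\\
\text{(c)}\quad & \mathrm{Cov}\bigl(Y_{\gamma_2}(n),Y_{\gamma_2}(m)\bigr)\ \to\ \mathrm{Cov}\bigl(Y_\gamma(n),Y_\gamma(m)\bigr).
\end{align*}
These three limits exactly match the covariance structure of the claimed limiting vector $(\sigma_\gamma W,\,Y_\gamma(n))_{n=1}^{N}$, since (a) is independent of $n$ and $m$ (so all the $A Y_{\gamma_1}(n)$'s collapse to a single random variable), (b) gives the independence of $W$ from the $Y_\gamma(n)$'s, and (c) preserves the stationary Gaussian structure.

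For (a), writing $k=m-n\ge 0$ and pairing the common noise terms yields
\[
\mathrm{Cov}\bigl(Y_{\gamma_1}(n),Y_{\gamma_1}(m)\bigr)=\sum_{i=2}^{\infty} i^{\gamma_1}(i+k)^{\gamma_1},
\]
and the two-sided estimate (\ref{eq:sum i 2gamma}), applied with the same $k$, gives
\[
(-2\gamma_1-1)^{-1}(k+2)^{2\gamma_1+1}\ \le\ \sum_{i=2}^{\infty} i^{\gamma_1}(i+k)^{\gamma_1}\ \le\ (-2\gamma_1-1)^{-1}.
\]
Since $(k+2)^{2\gamma_1+1}\to 1$ as $\gamma_1\to -1/2$, the squeeze theorem gives $(-2\gamma_1-1)\sum_{i=2}^{\infty} i^{\gamma_1}(i+k)^{\gamma_1}\to 1$ uniformly over $k$ in any finite range. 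Combined with the asymptotic (\ref{eq:A^2}) for $A(\gamma_1,\gamma_2)^2$, this proves (a) with the constant $\sigma_\gamma^2$ as in (\ref{eq:sigma_gamma}).

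For (b), an analogous pairing gives (taking for concreteness $k=m-n\ge 0$)
\[
\mathrm{Cov}\bigl(Y_{\gamma_1}(n),Y_{\gamma_2}(m)\bigr)=\sum_{i=2}^{\infty} i^{\gamma_1}(i+k)^{\gamma_2},
\]
and for $(\gamma_1,\gamma_2)$ sufficiently close to $(-1/2,\gamma)$ we have $\gamma_1+\gamma_2\le\gamma-1/2+\delta<-1$, so this sum is dominated by $\sum_{i=2}^{\infty} i^{\gamma_1+\gamma_2}$, uniformly bounded. Since by (\ref{eq:A^2}) we have $A(\gamma_1,\gamma_2)\asymp(-2\gamma_1-1)^{1/2}\to 0$, claim (b) follows; the case $k<0$ is symmetric. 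Finally, for (c), $\mathrm{Cov}(Y_{\gamma_2}(n),Y_{\gamma_2}(m))=\sum_{i=2}^{\infty} i^{\gamma_2}(i+|n-m|)^{\gamma_2}$ converges term by term, with uniform domination by $i^{2(\gamma+\epsilon)}$ for $\gamma_2$ in a small neighborhood of $\gamma\in(-1,-1/2)$, so dominated convergence gives (c). The main obstacle is (a): one has to show that the covariances within the $A Y_{\gamma_1}$-block \emph{all} converge to the same positive constant $\sigma_\gamma^2$, which is what produces the singular covariance of $\sigma_\gamma W$ in the limit and hence encodes the ``frozen Gaussian variable'' interpretation of Remark~\ref{Rem:physical}.
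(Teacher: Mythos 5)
Your proof is correct and follows essentially the same route as the paper: reduce joint convergence of the centered Gaussian vector to convergence of its covariance entries, use the two-sided bound (\ref{eq:sum i 2gamma}) together with (\ref{eq:A^2}) to show the $AY_{\gamma_1}$-block covariances all tend to the same constant $\sigma_\gamma^2$, kill the cross-covariances via $A(\gamma_1,\gamma_2)\to 0$ and the uniform summability of $\sum_i i^{\gamma_1+\gamma_2}$, and pass to the limit in the $Y_{\gamma_2}$-block by dominated convergence. No gaps.
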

\begin{proof}
Since $\Big(A(\gamma_1,\gamma_2)Y_{\gamma_1}(n), Y_{\gamma_2}(n) \Big)_{n=1}^N$ is always a centered and jointly Gaussian vector, we only need to show that its covariance structure converges to that of $\Big(\sigma_{\gamma}W, Y_{\gamma}(n) \Big)_{n=1}^N$. Let us first compute the covariance of $A(\gamma_1,\gamma_2)Y_{\gamma_1}$. By
 (\ref{eq:A^2}) and (\ref{eq:sum i 2gamma}), we have for $m\ge n$ (similarly for $m<n$)
\begin{align*}
\E\left[A(\gamma_1,\gamma_2)Y_{\gamma_1}(n)A(\gamma_1,\gamma_2)Y_{\gamma_1}(m)\right]=&A(\gamma_1,\gamma_2)^2\E\left[Y_{\gamma_1}(n)Y_{\gamma_1}(m)\right]\\
\sim& (2\gamma+3)(\gamma+1) \B(\gamma+1,-2\gamma-1)^{-1} (-2\gamma_1-1) \sum_{i=2}^\infty (i+m-n)^{\gamma_1} i^{\gamma_1}\\\sim & (2\gamma+3)(\gamma+1) \B(\gamma+1,-2\gamma-1)^{-1}=\sigma_\gamma^2. 
\end{align*}
Since the limit is independent of $n$, the limit process is indeed a fixed Gaussian random variable, say $\sigma_\gamma W$.

We now focus on the cross-covariance between  $A(\gamma_1,\gamma_2)Y_{\gamma_1}$ and $Y_{\gamma_2}$.  We have for $m\ge n$ (similarly for $m<n$) that
\begin{align}\label{eq:cov conv zero}
&\E\left[ A(\gamma_1,\gamma_2)Y_{\gamma_1}(n)  Y_{\gamma_2}(m) \right]\notag\\
\sim &[(2\gamma+3)(\gamma+1) \B(\gamma+1,-2\gamma-1)^{-1} (-2\gamma_1-1)]^{1/2} \sum_{i=2}^\infty (i+m-n)^{\gamma_1}i^{\gamma}\rightarrow 0, 
\end{align}
because $\sum_{i=2}^\infty i^{-1/2+\gamma}<\infty$. Thus we have asymptotic independence. 
Finally as $\gamma_2\rightarrow \gamma$,  the covariance structure of  the second term $Y_{\gamma_2}$ converges  to that of $Y_{\gamma}$.
The proof is then complete.
\end{proof}
The following convergence of normalized sum of long-memory linear process to fractional Brownian motion can be found in \citet{giraitis:koul:surgailis:2009:large} Corollary 4.4.1, which was originally due to \citet{davydov:1970:invariance}.
\begin{Lem}\label{Lem:linear to fbm}
Let $Y_\gamma(n)$ be as in (\ref{eq:Y stationary}). Then as $N\rightarrow\infty$
\[
Z_{\gamma}^N(t):=
N^{-\gamma-2/3}\sum_{n=1}^{[Nt]}Y_{\gamma}(n) \ConvFDD \sigma_\gamma^{-1} B_{\gamma+3/2}(t)
\]
where $\sigma_\gamma$ is as in (\ref{eq:sigma_gamma}) and $B_{\gamma+3/2}(t)$ is a standard fractional Brownian motion with Hurst index $\gamma+3/2$.
\end{Lem}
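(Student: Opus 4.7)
The plan is to exploit the fact that $Y_\gamma(n)$ is a stationary linear Gaussian sequence, so that the partial sum process $S_N(t)=\sum_{n=1}^{[Nt]}Y_\gamma(n)$ is itself a centered Gaussian process in $t$. Since the candidate limit $\sigma_\gamma^{-1}B_{\gamma+3/2}(t)$ is Gaussian as well, convergence in finite-dimensional distributions will reduce to pointwise convergence of the covariance kernel.

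First I would establish the asymptotic $\E Y_\gamma(0)Y_\gamma(k) \sim \B(\gamma+1,-2\gamma-1)\,k^{2\gamma+1}$ as $k\to\infty$. Starting from (\ref{eq:Y station var cov}), the sum $\sum_{i=2}^\infty i^\gamma(i+k)^\gamma$ is compared, by monotonicity arguments in the spirit of (\ref{eq:sum i 2gamma}), with the integral $\int_0^\infty x^\gamma(x+k)^\gamma\,dx$. Substituting $x=ku$ yields $k^{2\gamma+1}\int_0^\infty u^\gamma(1+u)^\gamma\,du$, and recognizing the last integral as $\B(\gamma+1,-2\gamma-1)$ via the second representation of the beta function in (\ref{eq:beta fun}) gives the claim.

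Next I would compute the limit of $N^{-(2\gamma+3)}\E S_N(s)S_N(t)$ by a double Riemann-sum argument. Using the asymptotic just derived, the normalized covariance should converge to $\B(\gamma+1,-2\gamma-1)\int_0^s\int_0^t|u-v|^{2\gamma+1}\,du\,dv$. Applying Lemma \ref{Lem:fractional cov} with $\alpha=2\gamma+1$ rewrites this as $\frac{\B(\gamma+1,-2\gamma-1)}{(2\gamma+2)(2\gamma+3)}\bigl(s^{2\gamma+3}+t^{2\gamma+3}-|s-t|^{2\gamma+3}\bigr)$, and comparison with (\ref{eq:sigma_gamma}) identifies the constant as $\tfrac{1}{2}\sigma_\gamma^{-2}$, matching exactly the covariance of $\sigma_\gamma^{-1}B_{\gamma+3/2}$.

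The main obstacle is justifying the passage to the Riemann-sum limit, because the kernel $|u-v|^{2\gamma+1}$ blows up on the diagonal where the asymptotic for $\E Y_\gamma(0)Y_\gamma(k)$ is not yet valid. My plan is to split the double sum $\sum_{n,m=1}^{[Nt]}$ into a near-diagonal strip $|n-m|\le K$ and its complement: the strip contributes at most $O(KN)\cdot\sum_{i\ge 2} i^{2\gamma}=O(KN)$ which is $o(N^{2\gamma+3})$ for any fixed $K$, whereas on the complement the asymptotic holds uniformly so that dominated convergence against the integrable majorant $|u-v|^{2\gamma+1}$ may be applied, after which $K\to\infty$ removes the excised strip. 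Since the statement is already recorded as Corollary 4.4.1 of \citet{giraitis:koul:surgailis:2009:large} (going back to \citet{davydov:1970:invariance}), one may alternatively invoke it directly without reproducing the above estimates.
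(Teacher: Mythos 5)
Your proposal is correct, but it takes a genuinely different route from the paper: the paper gives no proof of this lemma at all, simply citing Corollary 4.4.1 of \citet{giraitis:koul:surgailis:2009:large} (going back to \citet{davydov:1970:invariance}), which is the fallback you mention in your last sentence. Your primary argument is a direct, self-contained proof that exploits the special structure here: since the $\epsilon_i$ are i.i.d.\ standard normal, $Y_\gamma(n)$ is a stationary \emph{Gaussian} sequence, the partial-sum processes are centered Gaussian, and f.d.d.\ convergence reduces to convergence of covariances. Your computation checks out: $\E Y_\gamma(0)Y_\gamma(k)=\sum_{i\ge 2}i^\gamma(i+k)^\gamma\sim \B(\gamma+1,-2\gamma-1)k^{2\gamma+1}$, the normalized double sum converges to $\B(\gamma+1,-2\gamma-1)\int_0^s\int_0^t|u-v|^{2\gamma+1}\,du\,dv$, and via Lemma \ref{Lem:fractional cov} the constant $\B(\gamma+1,-2\gamma-1)/[(2\gamma+2)(2\gamma+3)]$ equals $\tfrac12\sigma_\gamma^{-2}$ by (\ref{eq:sigma_gamma}), which is exactly the covariance of $\sigma_\gamma^{-1}B_{\gamma+3/2}$. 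Your handling of the diagonal singularity (excise the strip $|n-m|\le K$, which contributes $O(KN)=o(N^{2\gamma+3})$ since $\gamma>-1$, then use the uniform asymptotics off the strip) is the right way to justify the Riemann-sum passage; one could make the ``dominated convergence for Riemann sums'' step fully rigorous by monotonicity of $k\mapsto r(k)$, but the sketch is sound. What the citation buys is generality (the quoted result holds for non-Gaussian i.i.d.\ innovations, where the summands are no longer Gaussian and one needs a genuine CLT argument); what your direct proof buys is a short, transparent argument that needs only Gaussianity. One small point: the exponent $N^{-\gamma-2/3}$ in the statement is a typo for $N^{-\gamma-3/2}$ (consistent with (\ref{eq:Z_gamma rewrite}) and the use of $Z_\gamma^N$ in the final proof of Theorem \ref{Thm:main edge}); your normalization $N^{-(2\gamma+3)}$ of the covariance corresponds to the corrected exponent.
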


We are now ready to combine the last few lemmas into an alternate proof of Theorem \ref{Thm:main edge}.
\begin{proof}[Proof of Theorem \ref{Thm:main edge}]
Tightness still follows from Lemma \ref{Lem:tight}. To prove the convergence of the finite-dimensional distributions, namely, to prove that 
\[
Z_{\gamma_1,\gamma_2}(t)\ConvFDD W B_{\gamma+3/2}\qquad \text{as $(\gamma_1,\gamma_2)\rightarrow (-1/2,\gamma)$},
\] 
it is sufficient to show that the following triangular approximation relations hold (see, e.g., Lemma 4.2.1 of \citet{giraitis:koul:surgailis:2009:large}):
\begin{equation}\label{eq:physical proof 1}
 \lim_{N\rightarrow\infty} \limsup_{(\gamma_1,\gamma_2)\rightarrow (-1/2,\gamma)} \E \left|Z_{\gamma_1,\gamma_2}(t)- \frac{A(\gamma_1,\gamma_2)}{A_N(\gamma_1,\gamma_2)} [Z_{\gamma_1,\gamma_2}^N(t)-R_N(t,\gamma_1,\gamma_2)]\right|^2=0,
\end{equation}
\begin{equation}\label{eq:physical proof 2}
\frac{A(\gamma_1,\gamma_2)}{A_N(\gamma_1,\gamma_2)}[ Z_{\gamma_1,\gamma_2}^N(t)-R_N(t,\gamma_1,\gamma_2)] \ConvFDD \sigma_\gamma W Z_{\gamma}^N(t)\quad \text{ as $(\gamma_1,\gamma_2)\rightarrow (-1/2,\gamma)$},
\end{equation}
\begin{equation}\label{eq:physical proof 3}
\sigma_\gamma W Z_{\gamma}^N(t) \ConvFDD W B_{\gamma+3/2}(t), \quad \text{as }N\rightarrow\infty.
\end{equation}
The convergence (\ref{eq:physical proof 1}) follows from Lemma \ref{Lem:discrete approx}, Lemma \ref{Lem:A ratio} and Lemma \ref{Lem:R go to 0}. 
For the convergence (\ref{eq:physical proof 2}), we have by  (\ref{eq:Z_gamma rewrite}), Lemma \ref{Lem:asymp indep} and (\ref{eq:cov conv zero}) that
\begin{align*}
\frac{A(\gamma_1,\gamma_2)}{A_N(\gamma_1,\gamma_2)} [Z_{\gamma_1,\gamma_2}^N(t)-R_N(t,\gamma_1,\gamma_2)]&=
 N^{-H} \sum_{n=1}^{[Nt]}\left[ A(\gamma_1,\gamma_2)Y_{\gamma_1}(n)Y_{\gamma_2}(n)-\E A(\gamma_1,\gamma_2) Y_{\gamma_1}(n)Y_{\gamma_2}(n)\right]\\
&\ConvFDD N^{-\gamma-3/2}\sum_{n=1}^{[Nt]}\left[\sigma_\gamma W Y_{\gamma}(n)-0\right] =\sigma_\gamma W Z_{\gamma}^N(t).
\end{align*}
Finally,   (\ref{eq:physical proof 3}) follows from Lemma \ref{Lem:linear to fbm}.
\end{proof}
%\begin{figure}
%\includegraphics[scale=0.7]{}
%\centering
%\caption{$M_3(\gamma)/48$}\label{fig:K3}
%\end{figure}
\noindent\textbf{Acknowledgments.} This work was partially supported by the NSF grants DMS-1007616 and DMS-1309009 at Boston University. We would  like to thank Yin Huang for a suggestion. We also thank   the referees for their helpful comments and suggestions.

\end{document}